\newcommand{\C}{\mathbb{C}}
\newcommand{\N}{\mathbb N}
\newcommand{\Z}{\mathbb Z}
\newcommand{\ba}{\begin {eqnarray}}
\newcommand{\ea}{\end {eqnarray}}
\newcommand{\baa}{\begin {eqnarray*}}
\newcommand{\eaa}{\end {eqnarray*}}
\newcommand{\be}{\begin {equation}}
\newcommand{\ee}{\end {equation}}
\newcommand{\bee}{\begin {equation*}}
\newcommand{\eee}{\end {equation*}}
\newlength{\dhatheight}
\newcommand{\doublehat}[1]{%
    \settoheight{\dhatheight}{\ensuremath{\widehat{#1}}}%
    \addtolength{\dhatheight}{-0.35ex}%
    \widehat{\vphantom{\rule{1pt}{\dhatheight}}%
    \smash{\widehat{#1}}}}
\theoremstyle{Theorem}
\theoremstyle{Theorem}
\newtheorem{thm}{Theorem}[section]
\newtheorem{lemt}[thm]{Lemma}
\newtheorem{prpt}[thm]{Proposition}
\newtheorem{thmt}[thm]{Theorem}
\theoremstyle{Theorem}
\theoremstyle{Theorem}
\theoremstyle{Plain}
\theoremstyle{Definition}
\newtheorem{dfnt}[thm]{Definition}
\def\({\left(}
\def\){\right)}
\def \<{{\langle}}
\def \>{{\rangle}}
\numberwithin{equation}{section}
\title{Quantum vertex algebra associated to quantum toroidal $\mathfrak{gl}_N$}
\author{Fulin Chen$^1$}
\address{1. School of Mathematics and Statistics, Qinghai Nationalities University, Xining, China 810000; 2. School of Mathematical Sciences, Xiamen University,
 Xiamen, China 361005}
\email{chenf@xmu.edu.cn}
\thanks{$^1$Partially supported by China NSF grant (No. 12161141001) and the Fundamental Research Funds for the Central Universities (No. 20720230020).}
\author{Xin Huang}
\address{School of Mathematical Sciences, Xiamen University,
 Xiamen, China 361005}
 \email{xinhuang2@stu.xmu.edu.cn}
\author{Fei Kong$^2$}
\address{Key Laboratory of Computing and Stochastic Mathematics (Ministry of Education), School of Mathematics and Statistics, Hunan Normal University, Changsha, China 410081}
\email{kongmath@hunnu.edu.cn}
\thanks{$^2$Partially supported by China NSF grant (No. 12371027).}
 \author{Shaobin Tan$^3$}
  \address{School of Mathematical Sciences, Xiamen University,
 Xiamen, China 361005}
  \email{tans@xmu.edu.cn}
  \thanks{$^3$Partially supported by China NSF grant (No. 12131018).}
\subjclass[2020]{17B69}
\keywords{Quantum toroidal algebra, quantum vertex algebra, equivariant $\phi$-coordinated quasi module}
\begin{document}
\maketitle
\begin{abstract}In this paper, we associate the quantum toroidal algebra $\mathcal{E}_N$ of type $\mathfrak{gl}_N$ with quantum vertex algebra through equivariant $\phi$-coordinated quasi modules.
More precisely, for every $\ell\in \mathbb{C}$, by deforming the universal affine vertex algebra  of $\mathfrak{sl}_\infty$, we construct an $\hbar$-adic quantum $\Z$-vertex algebra $V_{\widehat{\mathfrak{sl}}_{\infty},\hbar}(\ell,0)$.  Then we prove that the category of restricted $\mathcal{E}_N$-modules of level $\ell$ is canonically isomorphic to that of
equivariant $\phi$-coordinated quasi $V_{\widehat{\mathfrak{sl}}_{\infty},\hbar}(\ell,0)$-modules.
\end{abstract}

\section{Introduction}

In this paper, we explore and establish natural connections between representations of the quantum toroidal algebra of type $\mathfrak{gl}_N$ ($N\ge 3$) and the quantum vertex algebra associated to affine Lie algebra of $\mathfrak{sl}_\infty$.

\subsection{Quantum toroidal algebras}
Let $\mathfrak{g}$ be a complex reductive Lie algebra. Quantum toroidal algebra of type $\mathfrak{g}$ introduced by Ginzburg-Kapranov-Vasserot (\cite{GKV}) is a  quantum deformation of  the universal enveloping algebra of central extension of the double loop algebra $\mathfrak{g}\otimes \C[x^{\pm 1}, y^{\pm 1}]$.
The case $\mathfrak{g}=\mathfrak{gl}_N$ is  particularly interesting:  one can construct a two-parameter quantum toroidal algebra  by adding an  additional quantum parameter $p$. The classical limit of this quantum toroidal $\mathfrak{gl}_N$ is the universal enveloping algebra of central extension of the matrix Lie algebra $\mathfrak{gl}_N\otimes \C_p[x^{\pm 1}, y^{\pm 1}]$ (\cite{VV2}), where $\C_p[x^{\pm 1}, y^{\pm 1}]$ denotes the quantum $2$-torus satisfying that $yx=pxy$.

Quantum toroidal $\mathfrak{gl}_N$  has been constructed by many different approaches in the literature:  the $(q,\gamma)$-analog of $W$-algebra $\mathcal{W}_{1+\infty}$ (\cite{M3}),
the spherical double affine Hecke algebra (\cite{SV}),
the elliptic Hall algebra (\cite{BS}),
and the quantum shuffle algebra (\cite{FT,N,T}).
These diverse realizations make the theory of quantum toroidal $\mathfrak{gl}_N$ quite interesting and impressive.
On the other hand,  quantum toroidal $\mathfrak{gl}_N$ serves as a unifying algebraic framework that bridges different areas in physics, including various solvable structures of 2d conformal field theory, supersymmetric gauge
theory, and string theory (see \cite{MNNZ} and the references therein).

It is notable that
the representation theory of quantum toroidal $\mathfrak{gl}_N$ is very rich and promising:
 the  Schur duality with double affine Hecke algebra (\cite{VV1}), the toroidal analogue of $(\mathfrak{gl}_n,\mathfrak{gl}_m)$ duality (\cite{FJM}),  the study of  Fock representations (\cite{FFJMM,FJMM1,FJMM2,FJMM3, M1,M2,TU,STU,VV2}),  the vertex representation constructions (see \cite{GJ, S}),  the connection with Macdonald polynomials (\cite{AKOS,SKAO}, the connection with Mckay correspondence (\cite{FJW}),  the branching rules (\cite{FJMM4}), and etc.
In this paper we study the representation theory of quantum toroidal  $\mathfrak{gl}_N$ from the point of view of the association with quantum vertex algebras.

\subsection{Quantum vertex algebras}
In the general field of vertex algebras, one conceptual problem is to develop suitable quantum vertex algebra theories and associate quantum vertex algebras to certain quantum affine algebras.
As one of the fundamental works, Etingof and Kazhdan (\cite{EK}) developed a theory of quantum vertex operator algebras
in the sense of formal deformations of vertex algebras, where the usual locality is replaced by the $S$-locality, governed by a rational quantum Yang-Baxter operator.
Partly motivated by the work of Etingof and Kazhdan, H. Li
conducted a series of studies.
While vertex algebras are analogues of commutative associative algebras, H. Li  (\cite{Li1, Li2}) introduced the notion of
nonlocal vertex algebras, which are analogues of noncommutative associative algebras. A nonlocal vertex algebra becomes a weak quantum vertex algebra  if it satisfies the $S$-locality, and a quantum vertex algebra  when the $S$-locality is controlled by a rational quantum Yang-Baxter operator.
In order to associate quantum vertex algebras to quantum affine algebras, a theory of $\phi$-coordinated quasi modules was developed in \cite{Li5, Li6}.
The $\hbar$-adic versions of these notions were introduced in \cite{Li4}.
In this framework, a quantum vertex operator algebra in sense of Etingof-Kazhdan is an $\hbar$-adic quantum vertex algebra whose classical limit is a vertex algebra.

In the pioneer work \cite{EK}, Etingof and Kazhdan constructed quantum vertex operator algebras
 as formal deformations of type $A$ universal affine vertex algebras
by using the $R$-matrix type relations from \cite{RS-RTT}.
Later, Butorac, Jing and Ko\v{z}i\'{c}  (see \cite{BJK-qva-BCD}) extended Etingof-Kazhdan's construction to
types $B$, $C$ and $D$ rational $R$-matrices.
The modules for these $\hbar$-adic quantum vertex algebras
are in one-to-one correspondence with restricted modules for the corresponding Yangian doubles (\cite{K-qva-phi-mod-BCD}).
Based on the $R$-matrix presentation of quantum affine algebras of classical types
(\cite{DF-qaff-RTT-Dr}, \cite{JLM-qaff-RTT-Dr-BD}, \cite{JLM-qaff-RTT-Dr-C}),
Ko\v{z}i\'{c} (\cite{Kozic-qva-tri-A, K-qva-phi-mod-BCD}) constructed certain quantum vertex  algebras
by using trigonometric $R$-matrices and established a one-to-one correspondence
between their $\phi$-coordinated modules
and restricted modules for the quantum affine algebras.

In \cite{JKLT-Defom-va}, the authors developed a method for constructing quantum vertex  algebras by using vertex bialgebras,
and constructed a family of quantum lattice vertex algebras, which are formal deformations of lattice vertex algebras.
Based on a reformulation of the Drinfeld type Serre relations, the third author  constructed  certain affine quantum vertex  algebras in \cite{K}, and proved that quantum affine algebras can be associated with these affine quantum vertex algebras in a way similar to that affine Lie algebras are associated with affine vertex algebras.
The main goal of this paper is to generalize such an association on quantum affine algebras to the quantum toroidal $\mathfrak{gl}_N$ setting.

\subsection{The outline of the paper and the main result}
From now on, let $N$ be a positive integer, $p$ a nonzero complex number, and $\ell$ a complex number.
In this paper, we will always assume that
\[
N\ge 3\ \text{and}\ p\ \text{is not a root of unity}.
\]
In Section 2, we introduce the notion of  quantum toroidal algebra $\mathcal{E}_N=\mathcal{E}_N(\hbar,p)$ of type $\mathfrak{gl}_N$ over the commutative ring $\C[[\hbar]]$, as well as certain related $\C[[\hbar]]$-algebras. We also introduce a notion of
restricted $\mathcal{E}_N$-module of level $\ell$.

Note that the algebra $\mathcal{E}_N$ cannot directly associated to $\phi$-coordinated modules for an $\hbar$-adic quantum vertex algebra, due to the existence of the additional parameter ``$p$".
 Then we come to a
theory of what were called $(G,\chi)$-equivariant $\phi$-coordinated quasi module for an $\hbar$-adic quantum $G$-vertex algebra (\cite{Li6,JKLT1}), where $G$ is a group and $\chi$ is a linear character of $G$.
By definition, an $\hbar$-adic quantum $G$-vertex algebra is an  $\hbar$-adic quantum vertex algebra $V$ together with a group homomorphism $G\rightarrow \mathrm{Aut}(V)$, while a $(G,\chi)$-equivariant $\phi$-coordinated quasi $V$-module is a $\phi$-coordinated quasi $V$-module together with a ``$(G,\chi)$-equivariant" action.

In Section 3, we first recall the notion of $\hbar$-adic quantum $G$-vertex algebra and then construct an $\hbar$-adic quantum $\Z$-vertex algebra $V_{\widehat{\mathfrak{sl}}_\infty,\hbar}(\ell,0)$. We also show that $V_{\widehat{\mathfrak{sl}}_\infty,\hbar}(\ell,0)$ is a quantum deformation of  $V_{\widehat{\mathfrak{sl}}_\infty}(\ell,0)$ by presenting a new realization of  $V_{\widehat{\mathfrak{sl}}_\infty}(\ell,0)$. Here, the notation $V_{\widehat{\mathfrak{sl}}_\infty}(\ell,0)$ stands for the universal affine vertex algebra  associated to $\mathfrak{sl}_\infty$ (\cite{LL}).

In Section 4, we start with the notion of $(G,\chi)$-equivariant $\phi$-coordinated quasi module for an $\hbar$-adic quantum $G$-vertex algebra. Then we prove the main result of this paper (see Theorem \ref{thm:main}):
the category of $(\Z,\chi_{p^N})$-equivariant $\phi$-coordinated quasi $V_{\widehat{\mathfrak{sl}}_\infty,\hbar}(\ell,0)$-modules is isomorphic to the category of restricted $\mathcal{E}_N$-modules of level $\ell$, where $\chi_{p^N}$ is the character of $\Z$ given by $\chi_{p^N}(n)=p^{nN}$ for $n\in \Z$.
Our main method is to construct a ``middle" category $\mathcal{R}_\ell^\phi$ which bridges the above two categories (see Propositions \ref{pr:vglRphi} and \ref{pr:rescate}).

In this paper we denote by $\mathbb{Z}$,  $\mathbb{N}$, $\mathbb{Z}_+$, $\mathbb{C}$
and $\mathbb{C}^{\times}$ respectively the sets  of integers, non-negative
integers, positive integers, complex numbers and nonzero complex numbers. For every $m\in\Z_+$, we denote by $S_m$ the permutation group on $m$ letters.
And, for a Lie algebra $\mathfrak{t}$, we denote by $\mathcal{U}(\mathfrak{t})$ the universal enveloping algebra of $\mathfrak{t}$.

Let $\hbar,z,w,z_{0},z_{1},z_{2},\dots$ be mutually
commuting independent formal variables, and let $\delta(z)=\sum_{n\in\Z}z^n$ be the usual delta function.
We use the standard  notations and conventions as in
\cite{LL}.
For example, for a vector space $U$, $U[[z_1,z_2,\dots,z_r]]$ is the space of formal (possibly doubly
infinite) power series in $z_1,z_2,\dots,z_r$ with coefficients in $U$, and
$U((z_1,z_2,\dots,z_r))$  is the space of lower truncated Laurent power series in
$z_1,z_2,\dots,z_r$ with coefficients in $U$.

\section{Quantum toroidal algebras}
In this section, we  first recall the notion of quantum toroidal algebra $\mathcal{E}_N$ of type $\mathfrak{gl}_N$ over the commutative ring $\C[[\hbar]]$, and then introduce a notion of restricted $\mathcal{E}_N$-module of level $\ell$.

\subsection{Quantum toroidal algebras}

 We start with some notations.
 Throughout this paper, set
 \[q=\textrm{exp }\hbar\in\C[[\hbar]].\] As in the Introduction, let $p$ be a generic complex number, and let $N\ge 3$ be a positive integer. Write
 \begin{align}
 B=(b_{i,j})_{0\leq i,j\leq N-1}=
 \begin{pmatrix}
 2&-1&\cdots&0&-1\\
 -1&2&\cdots&0&0\\
 \vdots&\vdots&\ddots&\vdots&\vdots\\
 0&0&\cdots&2&-1\\
-1&0& \cdots&-1&2
 \end{pmatrix}
\end{align} for the generalized Cartan matrix of type $A_{N-1}^{(1)}$, and write
\begin{align}\label{M}
M=(m_{i,j})_{0\leq i,j\leq N-1}=
 \begin{pmatrix}
 0&-1& \cdots&0&1\\
 1&0&\cdots&0&0\\
 \vdots&\vdots&\ddots&\vdots&\vdots\\
 0&0&\cdots&0&-1\\
 -1&0 & \cdots&1&0
 \end{pmatrix}.
\end{align}

In this paper, by a $\C[[\hbar]]$-algebra, we mean a topological algebra over $\C[[\hbar]]$, equipped with its canonical $\hbar$-adic topology.
For $0\le i,j\le N-1$, write
  \begin{align}\label{gij}
g_{ij}(z_1,z_2)=\frac{q^{b_{i,j}}z_1-z_2}{z_1-q^{b_{i,j}}z_2}\in \C_{\ast}[[z_1,z_2,\hbar]].
 \end{align}
Here $\C_{\ast}[[z_1,z_2,\hbar]]$ denote the algebra extension of $\C[[z_1,z_2,\hbar]]$ by inverting $z_1$, $z_2$, $z_1-cz_2$ with $c$ invertible in $\C[[\hbar]]$. Denote by
\[\iota_{z_1,z_2}:\C_{\ast}[[z_1,z_2,\hbar]]\to \C[[\hbar]]((z_1))((z_2)),\]
the canonical algebra embedding that preserves each elements of $\C[[z_1,z_2,\hbar]]$.
For every integer $n$ and every invertible element $a$ in $\C[[\hbar]]$, we set
\[[n]_{a}=\frac{a^n-a^{-n}}{a-a^{-1}}\in \C[[\hbar]].\]
In addition, for a positive integer $k$, we set
\begin{align*}
  \binom{n}{k}_a=\prod_{r=1}^{k}\frac{[n-r+1]_a}{[r]_a}.
\end{align*}

Now we  introduce the notion of a quantum toroidal algebra over the commutative ring $\C[[\hbar]]$ (see \cite{GKV,VV2}).

\begin{dfnt}
The quantum toroidal algebra of type $\mathfrak{gl}_N$  over $\C[[\hbar]]$ is the unital $\C[[\hbar]]$-algebra
\[\mathcal{E}_N=\mathcal{E}_N(\hbar,p)
\] topologically generated by the set
\begin{align}\label{gentor}
\{h_{i}(n),~x_{i}^{\pm}(n),~c\mid 0\leq i\leq N-1, n\in\Z\},
\end{align}
and subject to the relations in terms of generating functions
\begin{align*}
\phi_{i}^{\pm}(z)=q^{\pm h_{i}(0)}\mathrm{exp}\left(\pm(q-q^{-1})\sum_{\pm n>0}h_{i}(n)z^{-n}\right)\ \text{and}\
x_{i}^{\pm}(z)= \sum_{n\in\Z}x_{i}^{\pm}(n)z^{-n}.
\end{align*}
The relations are as follows $(0\leq i,j\leq N-1)$:
\begin{align*}
(\mathrm{Q1})\ &[c,\phi_{i}^{\pm}(z)]=0=[c,x_{i}^{\pm}(z)]=[\phi_{i}^{\pm}(z_1),\phi_{j}^{\pm}(z_2)],\\
  (\mathrm{Q2})\  &\phi_{i}^{+}(z_1)\phi_{j}^{-}(z_2)=\phi_{j}^{-}(z_2)\phi_{i}^{+}(z_1)\iota_{z_1,z_2}\left(g_{ij}(z_1,q^cp^{-m_{i,j}}z_2)^{-1}g_{ij}(z_1,q^{-c}p^{-m_{i,j}}z_2)\right),\\
   (\mathrm{Q3})\  &\phi_{i}^{+}(z_1)x_{j}^{\pm}(z_2)=x_{j}^{\pm}(z_2)\phi_{i}^{+}(z_1)\iota_{z_1,z_2}g_{ij}(z_1,q^{\mp\frac{1}{2}c}p^{-m_{i,j}}z_2)^{\pm 1},\\
    (\mathrm{Q4})\ &\phi_{i}^{-}(z_1)x_{j}^{\pm}(z_2)=x_{j}^{\pm}(z_2)\phi_{i}^{-}(z_1)\iota_{z_2,z_1}g_{ji}(z_2,q^{\mp\frac{1}{2}c}p^{m_{i,j}}z_1)^{\mp 1},\\
   (\mathrm{Q5})\ &[x_{i}^{+}(z_1),x_{j}^{-}(z_2)]=\frac{\delta_{i,j}}{q-q^{-1}}\left(\phi_{i}^{+}(q^{\frac{1}{2}c}z_2)\delta\left(\frac{q^cz_2}{z_1}\right)-\phi_{i}^{-}(q^{-\frac{1}{2}c}z_2)\delta\left(\frac{q^{-c}z_2}{z_1}\right)\right),\\
    (\mathrm{Q6})\ & (p^{m_{i,j}}z_1-q^{\pm b_{i,j}}z_2)x_{i}^{\pm}(z_1)x_{j}^{\pm}(z_2)=(q^{\pm b_{i,j}}p^{m_{i,j}}z_1-z_2)x_{j}^{\pm}(z_2)x_{i}^{\pm}(z_1),\quad \textrm{if }b_{i,j}\ne 0,\\
(\mathrm{Q7})\
     &\sum_{\sigma\in S_{1-b_{i,j}}}\sum_{r=0}^{1-b_{i,j}}(-1)^r\binom{1-b_{i,j}}{r}_q
     x_{i}^{\pm}(z_{\sigma(1)})\cdots x_{i}^{\pm}(z_{\sigma(r)})x_{j}^{\pm}(w)\\
     &\qquad\times x_{i}^{\pm}(z_{\sigma(r+1)})\cdots x_{i}^{\pm}(z_{\sigma(1-b_{i,j})})=0,\quad \textrm{if }b_{i,j}\le 0.
   \end{align*}
\end{dfnt}

We also denote by  $\mathcal{E}_N^l$
the unital $\C[[\hbar]]$-algebra topologically generated by the set \eqref{gentor}, and subject to relations (Q1)-(Q6). Then $\mathcal{E}_N$ is naturally  a quotient algebra of $\mathcal{E}_N^l$. Additionally, we denote by  $\mathcal{E}_N^f$
the unital $\C[[\hbar]]$-algebra topologically generated by the set \eqref{gentor}, and subject to relations (Q1)-(Q4), (Q6) and
\[(\mathrm{Q}5')\quad (z_1-q^cz_2)^{\delta_{i,j}}(z_1-q^{-c}z_2)^{\delta_{i,j}}[x_{i}^{+}(z_1),x_{j}^{-}(z_2)]=0\]
for $0\leq i,j\leq N-1$. Note that the relation (Q5$'$) can be deduced from (Q5), and so
 $\mathcal{E}_N$ and $\mathcal{E}_N^l$ are naturally  quotient algebras of $\mathcal{E}_N^f$.

For later use, we also recall the Drinfeld realization of the quantum affine algebra $\mathcal{U}_{q}(\widehat{\mathfrak{sl}}_3)$ (see \cite{D,B}).
\begin{dfnt}
The quantum affine algebra $\mathcal{U}_{q}(\widehat{\mathfrak{sl}}_3)$ over $\C[[\hbar]]$ is the unital $\C[[\hbar]]$-algebra topologically generated by the set
\begin{align}\label{eq:genaff}
\{k_i(n),~e_i^{\pm}(n),~c\mid i=1,2,~n\in\Z\},
\end{align}
and subject to the relations in terms of generating functions
\[\varphi_i^{\pm}(z)=q^{\pm k_i(0)}\mathrm{exp}\left(\pm(q-q^{-1})\sum_{\pm n>0}k_i(n)z^{-n}\right)\quad \textrm{and}\quad e_i^{\pm}(n)=\sum_{n\in\Z}e_i^{\pm}(n)z^{-n}.\]
The relations are as follows $(i,j=1,2):$
\begin{align*}
(\mathrm{L1})\  &[c,\varphi_i^{\pm}(z)]=0=[c,e_i^{\pm}(z)]=[\varphi_i^{\pm}(z_1),\varphi_j^{\pm}(z_2)],\\
(\mathrm{L2})\  &\varphi_i^{+}(z_1)\varphi_j^{-}(z_2)=\varphi_j^{-}(z_1)\varphi_i^{+}(z_2)\iota_{z_1,z_2}\left(g_{ij}(z_1,q^{c}z_2)^{-1}g_{ij}(z_1,q^{-c}z_2)\right),\\
(\mathrm{L3})\  &\varphi_i^{+}(z_1)e_j^{\pm}(z_2)=e_j^{\pm}(z_2)\varphi_i^{\pm}(z_1)\iota_{z_1,z_2}\left(g_{ij}(z_1,q^{\mp \frac{1}{2}c}z_2)\right)^{\pm 1},\\
(\mathrm{L4})\  &\varphi_i^{-}(z_1)e_j^{\pm}(z_2)=e_j^{\pm}(z_2)\varphi_i^{-}(z_1)\iota_{z_2,z_1}\left(g_{ji}(z_2,q^{\mp \frac{1}{2}c}z_1)\right)^{\mp 1},\\
(\mathrm{L5})\  &[e_i^{+}(z_1),e_j^{-}(z_2)]=\frac{\delta_{i,j}}{q-q^{-1}}\left(\varphi_i^{+}(z_1q^{-\frac{1}{2}c})\delta\left(\frac{z_2q^{c}}{z_1}\right)-\varphi_i^{-}(z_1q^{\frac{1}{2}c})\delta\left(\frac{z_2q^{-c}}{z_1}\right)\right),\\
(\mathrm{L6})\  &(z_1-q^{\pm b_{i,j}}z_2)e_i^{\pm}(z_1)e_j^{\pm}(z_2)=(q^{\pm b_{i,j}}z_1-z_2)e_j^{\pm}(z_2)e_i^{\pm}(z_1),\\
(\mathrm{L7})\  &\sum_{\sigma\in S_2}\left(e_{i}^{\pm}(z_{\sigma(1)})e_{i}^{\pm}(z_{\sigma(2)})e_{j}^{\pm}(w)
-(q+q^{-1})e_{i}^{\pm}(z_{\sigma(1)})e_{j}^{\pm}(w)e_{i}^{\pm}(z_{\sigma(2)})\right.\\
    &\qquad\qquad\quad\left.+e_{j}^{\pm}(w)e_{i}^{\pm}(z_{\sigma(1)})e_{i}^{\pm}(z_{\sigma(2)})\right)=0\quad \textrm{if }b_{i,j}=-1.
\end{align*}
\end{dfnt}

 Let $\mathcal{U}_q^{l}(\widehat{\mathfrak{sl}}_3)$ be the unital $\C[[\hbar]]$-algebra topologically generated by the set $\eqref{eq:genaff}$ and subject to relations (L1)-(L6). Then  $\mathcal{U}_q(\widehat{\mathfrak{sl}}_3)$ is naturally a quotient algebra of $\mathcal{U}_q^{l}(\widehat{\mathfrak{sl}}_3)$.

For $0\leq i\leq N-1$, denote by $\mathcal{U}_{i}$ and $\mathcal{U}_{i}^{l}$  the subalgebras of  $\mathcal{E}_N$ and $\mathcal{E}_N^l$ topologically generated by the set
\begin{align}\label{affgen}
\{p^{jn}h_{j}(n),~p^{jn}x_{j}^{\pm}(n),c\mid j=i,i+1,~n\in\Z\},
\end{align}
respectively.
Here, as a convention,  $i+1$ is understood as  $0$ when $i=N-1$.

Then we have the following  straightforward result.
\begin{lemt}\label{le:Uiaff}
As a $\C[[\hbar]]$-algebra, $\mathcal{U}_i$ is a quotient algebra of  $\mathcal{U}_q(\widehat{\mathfrak{sl}}_3)$ with the quotient map given by
\begin{align}\label{eq:quomap}
c\mapsto c,\quad k_{\epsilon}(n)\mapsto p^{(i+\epsilon-1)n}h_{i+\epsilon-1}(n), \quad \textrm{and}\quad  e_{\epsilon}^{\pm}(n)\mapsto p^{(i+\epsilon-1)n}x_{i+\epsilon-1}^{\pm}(n)
\end{align}
for $\epsilon=1,2$ and $n\in\Z$. Similarly, $\mathcal{U}_i^{l}$ is a quotient algebra of  $\mathcal{U}_q^{l}(\widehat{\mathfrak{sl}}_3)$ with the quotient map as in  \eqref{eq:quomap}.
\end{lemt}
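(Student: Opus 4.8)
The plan is to verify directly that the assignment \eqref{eq:quomap} is a well-defined surjective $\C[[\hbar]]$-algebra homomorphism $\mathcal{U}_q(\widehat{\mathfrak{sl}}_3)\to\mathcal{U}_i$, and similarly at the level of the algebras with relations (L1)--(L6). Surjectivity is immediate from the definition \eqref{affgen} of $\mathcal{U}_i$, since the images of the generators $k_\epsilon(n),e_\epsilon^\pm(n),c$ are exactly the chosen topological generators of $\mathcal{U}_i$; continuity is clear since the map sends generators to generators. So the entire content is checking that the images of the generating series $\varphi_\epsilon^\pm(z),e_\epsilon^\pm(z)$ satisfy relations (L1)--(L7).

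First I would introduce the shorthand: set $a=i$, $a+1=i+1$ (read mod $N$), so that under \eqref{eq:quomap} we have $\varphi_\epsilon^\pm(z)\mapsto \phi_{a+\epsilon-1}^\pm(p^{a+\epsilon-1}z)$ and $e_\epsilon^\pm(z)\mapsto x_{a+\epsilon-1}^\pm(p^{a+\epsilon-1}z)$ (after absorbing the powers of $p$ into the formal variable; this is the key bookkeeping observation — multiplying $h_j(n)$ by $p^{jn}$ amounts to the substitution $z\mapsto p^j z$ in $\phi_j^\pm(z)$ and $x_j^\pm(z)$). Relations (L1) and (L5) then follow from (Q1) and (Q5) by a direct substitution, because the indices appearing in $\mathcal{U}_q(\widehat{\mathfrak{sl}}_3)$ for type $A_2$ correspond to the pair $\{a,a+1\}$, which is always a ``connected'' pair in the cyclic diagram of $A_{N-1}^{(1)}$, and the $\delta_{i,j}$ and normalizations match up verbatim under the rescaling. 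Relations (L6) and (L7) similarly follow from (Q6) and (Q7): here the crucial point is that for the pair $\{a,a+1\}$ one has $b_{a,a+1}=b_{a+1,a}=-1$, so (Q7) specializes to exactly the three-term Serre relation (L7) with coefficient $q+q^{-1}=[2]_q$, and the factors $p^{m_{a,a+1}}$ in (Q6) get cancelled by the rescaling $z_1\mapsto p^a z_1$, $z_2\mapsto p^{a+1}z_2$ — one checks $p^{m_{a,a+1}}\cdot p^a/p^{a+1}=p^{m_{a,a+1}-1}$ and that this matches because $m_{a,a+1}=\pm1$ for the relevant entries, leaving the $p$-free relation (L6). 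The main bookkeeping obstacle is the wrap-around case $i=N-1$, where $i+1=0$ and the entries $m_{N-1,0}$ and $b_{N-1,0}$ come from the ``corner'' of the matrices $B$ and $M$; one should check that the sign conventions in \eqref{M} are exactly such that the rescaling still cancels all powers of $p$.

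For relations (L2), (L3), (L4) I would compare the structure functions. Under the substitution, the left side of (Q2) with $q^c p^{-m_{i,j}}z_2$ becomes, after the rescaling $z_1\mapsto p^{a+\epsilon-1}z_1$, $z_2\mapsto p^{a+\epsilon'-1}z_2$, a product whose $p$-exponent in the argument is $m_{a+\epsilon-1,a+\epsilon'-1}\cdot(\text{nothing})+(a+\epsilon'-1)-(a+\epsilon-1)-m_{a+\epsilon-1,a+\epsilon'-1}$; the point is that $m_{j,j+1}$ and $m_{j+1,j}$ for consecutive indices are precisely $\mp1$, which is $\mp$ the difference of the indices, so all $p$'s cancel and one recovers exactly (L2) with $g_{ij}(z_1,q^cz_2)$. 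The same cancellation handles (L3) and (L4), where one additionally uses that $\varphi_i^+(z_1)e_j^\pm(z_2)$ on the right of (L3) carries $\varphi_i^\pm$ (not $\varphi_i^+$) — matching the ``$\pm$'' on the right side of (Q3), which is a typo-level subtlety to state carefully. Having checked (L1)--(L7), the universal property of $\mathcal{U}_q(\widehat{\mathfrak{sl}}_3)$ as the algebra on generators \eqref{eq:genaff} modulo those relations gives the homomorphism; restricting attention to (L1)--(L6) gives the statement for $\mathcal{U}_i^l$ and $\mathcal{U}_q^l(\widehat{\mathfrak{sl}}_3)$ with the identical map, completing the proof. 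I expect the only real work to be the sign/$p$-power bookkeeping, especially across the cyclic wrap-around $i=N-1$; everything else is a mechanical substitution.
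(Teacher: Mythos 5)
Your overall strategy is exactly the intended one: the paper offers no proof (the lemma is declared ``straightforward''), and the only content is the verification that the rescaled generating series satisfy (L1)--(L7) as consequences of (Q1)--(Q7), with surjectivity and continuity being immediate. Your identification of the key arithmetic fact --- that $m_{j_1,j_2}=j_1-j_2$ for the relevant pairs $j_1,j_2\in\{i,i+1\}$, provided one reads $i+1$ as $N$ (not $0$) in the $p$-exponent when $i=N-1$ --- is also correct, and your remarks about (Q7) specializing to (L7) via $\binom{2}{1}_q=q+q^{-1}$ are fine.

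However, your bookkeeping has a sign error in the direction of the rescaling, and as written your cancellation fails. Since $\sum_n p^{jn}h_j(n)z^{-n}=\sum_n h_j(n)(p^{-j}z)^{-n}$, the map sends $\varphi_\epsilon^{\pm}(z)$ and $e_\epsilon^{\pm}(z)$ to $\phi_{j}^{\pm}(p^{-j}z)$ and $x_{j}^{\pm}(p^{-j}z)$ with $j=i+\epsilon-1$, not to $\phi_j^{\pm}(p^{j}z)$ as you assert. With your sign, the residual $p$-power in (Q6) is $p^{m_{j_1,j_2}+j_1-j_2}=p^{2m_{j_1,j_2}}$, which is not $1$ when $j_1\neq j_2$ (indeed your own display $p^{m_{a,a+1}-1}$ equals $p^{-2}$, since $m_{a,a+1}=-1$ by \eqref{M}); the same defect propagates to your exponent count for (L2)--(L4). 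With the correct substitution the residual exponent is $m_{j_1,j_2}-j_1+j_2=0$ in every case, including the wrap-around, and the argument closes. So the proof is correct in structure but needs this sign fixed throughout; it does not affect the validity of the approach.
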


\subsection{Restricted $\mathcal{E}_{N}$-module of level $\ell$}

Throughout this paper, let $\ell$ be a fixed complex number.
In this subsection, we introduce and study the notion of restricted $\mathcal{E}_{N}$-module of level $\ell$.

 Let $W$ be a $\C[[\hbar]]$-module.
 We say that
\begin{itemize}
\item $W$ is torsion-free if $\hbar v \neq  0$ for every $0\neq v\in W$,
\item $W$ is separated if $\cap_{n\geq 1}\hbar^{n}W=0$, and
\item $W$ is  $\hbar$-adically complete if every Cauchy sequence in $W$ with respect to the $\hbar$-adic topology has a limit in $W$.
\end{itemize}
Additionally, we say that
 $W$ is topologically free if $W = W_0[[\hbar]]$ for some vector space $W_0$ over $\C$. It is known that a $\C[[\hbar]]$-module is topologically free if and only if it is torsion-free, separated, and $\hbar$-adically complete (see \cite{Kas}).

Assume now that $W=W_{0}[[\hbar]]$ is topologically free. For $k\in\mathbb{Z}_{+}$, write
\begin{align}\label{ehk}
\mathcal{E}_{\hbar}^{(k)}(W;z_1,z_2,\dots,z_k)=\textrm{Hom}_{\C[[\hbar]]}(W,W_0((z_1,z_2,\dots,z_k))[[\hbar]]).
\end{align}
Note that the $\C[[\hbar]]$-module
\[\mathcal{E}_{\hbar}^{(k)}(W;z_1,z_2,\dots,z_k)=\left(\textrm{Hom}_{\C}\left(W_0,W_0((z_1,z_2,\dots,z_k)\right)\right)[[\hbar]]\] is topologically free as well. For convenience, we also write
\begin{align*}
\mathcal{E}_{\hbar}^{(k)}(W)=\mathcal{E}_{\hbar}^{(k)}(W;z_1,z_2,\dots,z_k)\quad\textrm{and}\quad \mathcal{E}_{\hbar}(W)=\mathcal{E}_{\hbar}^{(1)}(W).
\end{align*}

\begin{dfnt}
Let $W$ be an  $\mathcal{E}_N^f$-module. We say that $W$ is restricted if $W$ is  topologically free as a $\C[[\hbar]]$-module, and
\[\phi_{i}^{\pm}(z),~ x_{i}^{\pm}(z)\in\mathcal{E}_{\hbar}(W)\quad \textrm{for }\,0\leq i\leq N-1.\]
Additionally, we say that $W$ is of level $\ell$ if $c$ acts as the scalar $\ell$ on $W$.
\end{dfnt}

Note that for every $\mathcal{E}_N$-module,
by using  the quotient map
\[
\mathcal{E}_N^f\rightarrow \mathcal{E}_N,
\]
there is a natural $\mathcal{E}_N^f$-module structure on $W$.

\begin{dfnt}
An $\mathcal{E}_N$-module is said to be restricted of level $\ell$ if  as an $\mathcal{E}_N^f$-module, it is restricted of level $\ell$.
\end{dfnt}

For $m\in\mathbb{Z}_{+}$ and  $0\leq i_1,i_2,\dots,i_m\leq N-1$, write
\begin{align}
&x_{i_1,i_2,\dots,i_m}^{\pm}(z_1,z_2,\dots,z_m)\\
=\ &\left(\prod_{1\leq s<t\leq m}\iota_{z_s,z_t}f^{\pm }_{i_s,i_t}(p^{m_{i_s,i_t}}z_s,z_t)\right)x_{i_1}^{\pm}(z_1)x_{i_2}^{\pm}(z_2)\cdots x_{i_m}^{\pm}(z_m)\nonumber,
\end{align}
where
\begin{align}\label{eq:fpm}
f^{\pm }_{i_s,i_t}(z_s,z_t)=(z_s-q^{\pm b_{i_s,i_t}}z_t)(z_s-z_t)^{-\delta_{i_s,i_t}}.
\end{align}
And, for $0\leq i,j\leq N-1$, write
\begin{align}\label{cij}
C_{i,j}=-(-1)^{\delta_{i,j}}.
\end{align}

\begin{lemt}\label{le:xiij}
Let $W$ be a restricted $\mathcal{E}_N^f$-module and $m\in \Z_+$. Then for any $0\leq i_1,i_2,\dots,i_m\leq N-1$, one has that
\[x_{i_1,i_2,\dots,i_m}^{\pm}(z_1,z_2,\dots,z_m)\in\mathcal{E}_{\hbar}^{(m)}(W).\]
Moreover, for every $\sigma\in S_m$, one has that
\begin{align}\label{xsigma}
&x_{i_{\sigma(1)},i_{\sigma(2)},\dots,i_{\sigma (m)}}^{\pm}(z_{\sigma(1)},z_{\sigma(2)},\dots,z_{\sigma (m)})\\
=\ &\left(\prod_{\substack{1\leq s<t\leq m,\\ \sigma(s)>\sigma(t)}}C_{i_{\sigma(s)},i_{\sigma(t)}}p^{m_{i_{\sigma(s)},i_{\sigma(t)}}}\right)x_{i_1,i_2,\dots,i_m}^{\pm}(z_1,z_2,\dots,z_m),\nonumber
\end{align}
where $m_{i_{\sigma(s)},i_{\sigma(t)}}$ is as in \eqref{M}.
\end{lemt}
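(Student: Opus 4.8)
The plan is to reduce everything to the two-variable case by induction on $m$, using relations (Q6) and (Q5$'$) together with the defining formulas. First I would verify the base case $m=2$: for $0\le i,j\le N-1$ I claim that
\[
f^{\pm}_{i,j}(p^{m_{i,j}}z_1,z_2)\,x_i^{\pm}(z_1)x_j^{\pm}(z_2)\in\mathcal{E}_\hbar^{(2)}(W).
\]
When $b_{i,j}\ne 0$ this is immediate from (Q6): the left-hand side of (Q6), after multiplying by $(p^{m_{i,j}}z_1-q^{\pm b_{i,j}}z_2)^{-1}$ where legal, shows $x_i^{\pm}(z_1)x_j^{\pm}(z_2)$ has only a simple pole along $p^{m_{i,j}}z_1=q^{\pm b_{i,j}}z_2$ coming from the scalar, so multiplying by the numerator of $f^{\pm}_{i,j}$ clears it and lands in $W_0((z_1,z_2))[[\hbar]]$. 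When $b_{i,j}=0$ and $i\ne j$ there is nothing to clear and $f^{\pm}_{i,j}(p^{m_{i,j}}z_1,z_2)=p^{m_{i,j}}z_1-z_2$ times a unit; when $i=j$ one uses (Q5$'$)-type reasoning — more precisely $b_{i,i}=2$, so this case is covered by the $b_{i,j}\ne 0$ branch and the $\delta_{i,j}$ in \eqref{eq:fpm} accounts for the extra factor needed because $p^{m_{i,i}}=1$. I should double-check the exact bookkeeping of powers of $p$ here, since $m_{i,i}=0$.

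For the inductive step I would argue that $x_{i_1,\dots,i_m}^{\pm}(z_1,\dots,z_m)$ is obtained from the product $x_{i_1}^{\pm}(z_1)\cdots x_{i_m}^{\pm}(z_m)$ by multiplying by $\prod_{s<t}\iota_{z_s,z_t}f^{\pm}_{i_s,i_t}(p^{m_{i_s,i_t}}z_s,z_t)$; the content is that this product of scalar rational functions exactly cancels all the poles produced when composing the $x$'s. Since composition of maps in $\mathcal{E}_\hbar$ introduces poles only along $z_s=(\text{unit})z_t$ for $s<t$ — and by the $m=2$ analysis each such pole is at worst simple and killed by the corresponding factor $f^{\pm}_{i_s,i_t}(p^{m_{i_s,i_t}}z_s,z_t)$ — the full product lies in $W_0((z_1,\dots,z_m))[[\hbar]]$. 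Here I would be careful to use the standard fact (from \cite{LL}-style arguments, available since $W$ is restricted hence each $x_i^{\pm}(z)\in\mathcal{E}_\hbar(W)$) that a product $a_1(z_1)\cdots a_m(z_m)$ of fields, after multiplying by suitable polynomials clearing pairwise poles, becomes an element of $\mathcal{E}_\hbar^{(m)}(W)$; the pairwise pole structure is exactly what (Q6) and (Q5$'$) control.

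For the symmetry relation \eqref{xsigma}, it suffices by the standard reduction to treat $\sigma$ an adjacent transposition $(t,t+1)$, since both sides are genuine elements of $\mathcal{E}_\hbar^{(m)}(W)$ (so no ordering ambiguity) and $S_m$ is generated by adjacent transpositions — one then checks the scalar prefactor is multiplicative under composition of transpositions, which follows because $C_{i,j}=C_{j,i}$, $m_{i,j}=-m_{j,i}$ cancel appropriately in products, or more simply because the prefactor only depends on the inversion set of $\sigma$. For an adjacent swap, one compares $x_{\dots i_t i_{t+1}\dots}^{\pm}$ with $x_{\dots i_{t+1} i_t\dots}^{\pm}$: all factors $f^{\pm}_{i_s,i_u}$ with $\{s,u\}\ne\{t,t+1\}$ are common, and the relation reduces to the identity
\[
\iota_{z_{t+1},z_t}f^{\pm}_{i_{t+1},i_t}(p^{m_{i_{t+1},i_t}}z_{t+1},z_t)\,x_{i_{t+1}}^{\pm}(z_{t+1})x_{i_t}^{\pm}(z_t)
= C_{i_t,i_{t+1}}p^{m_{i_t,i_{t+1}}}\,\iota_{z_t,z_{t+1}}f^{\pm}_{i_t,i_{t+1}}(p^{m_{i_t,i_{t+1}}}z_t,z_{t+1})\,x_{i_t}^{\pm}(z_t)x_{i_{t+1}}^{\pm}(z_{t+1}),
\]
which is precisely a rewriting of (Q6) when $b_{i_t,i_{t+1}}\ne 0$ (divide both sides by the common scalar and track that $\iota_{z_1,z_2}g$ and $\iota_{z_2,z_1}g$ agree once there is no pole), and of (Q5$'$) — i.e. commutativity up to a sign $C_{i,i}=1$ — when $i_t=i_{t+1}$, and trivial commutativity (sign $C_{i,j}=-1$ absorbed into the $p^{m_{i,j}}=p^{\pm1}$ bookkeeping... no: here $C_{i,j}=-1$ and one genuinely gets a sign) when $b_{i_t,i_{t+1}}=0,\ i_t\ne i_{t+1}$. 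The main obstacle I anticipate is exactly this last bookkeeping: keeping the signs $C_{i,j}$ and the powers $p^{m_{i,j}}$ straight across the three cases $b_{i,j}>0$, $b_{i,j}<0$, $b_{i,j}=0$, and confirming that the two orders of $\iota$ really do produce the same element once the pole is cleared. Everything else is routine manipulation in $\mathcal{E}_\hbar^{(m)}(W)$.
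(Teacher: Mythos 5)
Your proposal is correct and follows essentially the same route as the paper: everything reduces to the two-variable identity extracted from (Q6) (the paper's equation \eqref{tildefxij}), and the general case is handled by induction in the paper and, equivalently, by your reduction to adjacent transpositions. The one obstacle you flag --- that $\iota_{z_1,z_2}$ and $\iota_{z_2,z_1}$ applied to $(z_1-z_2)^{-\delta_{i,j}}$ give the same result once the pole is cleared --- is settled by observing that for $i=j$ relation (Q6) makes $A^{\pm}(z_1,z_2)=(z_1-q^{\pm 2}z_2)x_i^{\pm}(z_1)x_i^{\pm}(z_2)$ an element of $\mathcal{E}_{\hbar}^{(2)}(W)$ satisfying $A^{\pm}(z_2,z_1)=-A^{\pm}(z_1,z_2)$, hence vanishing on the diagonal and divisible by $z_1-z_2$ in $W_0((z_1,z_2))[[\hbar]]$. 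One caveat you share with the paper's own proof: for $b_{i,j}=0$ your ``nothing to clear''/``trivial commutativity'' step requires the $b_{i,j}=0$ instance of (Q6), i.e.\ the weak commutativity $(z_1-z_2)[x_i^{\pm}(z_1),x_j^{\pm}(z_2)]=0$, which the displayed definition of $\mathcal{E}_N^f$ formally excludes via the proviso ``if $b_{i,j}\neq 0$''; the paper invokes (Q6) uniformly at exactly this point too, so this is an ambiguity in the stated relations rather than a defect of your argument.
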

\begin{proof}
The assertion is clear for the case that $m=1$. When $m=2$, it follows from the relation (Q6) that
\begin{equation}\label{tildefxij}
\begin{split}
&\iota_{z_1,z_2}\left(f_{i,j}^{\pm }(p^{m_{i,j}}z_1,z_2)\right)x_{i}^{\pm }(z_1)x_{j}^{\pm }(z_2)\\
=\ &\iota_{z_1,z_2}\left((z_1-z_2)^{-\delta_{i,j}}\right)\left((p^{m_{i,j}}z_1-q^{\pm b_{i,j}}z_2)x_{i}^{\pm }(z_1)x_{j}^{\pm }(z_2)\right)\\
=\ &\iota_{z_2,z_1}\left((z_1-z_2)^{-\delta_{i,j}}\right)\left((p^{m_{i,j}}z_1-q^{\pm b_{i,j}}z_2)x_{i}^{\pm }(z_1)x_{j}^{\pm }(z_2)\right)\\
=\ &\iota_{z_2,z_1}\left((z_1-z_2)^{-\delta_{i,j}}\right)\left((q^{\pm b_{i,j}}p^{m_{i,j}}z_1-z_2)x_{j}^{\pm }(z_2)x_{i}^{\pm }(z_1)\right)\\
=\ &C_{i,j}p^{m_{i,j}}\iota_{z_2,z_1}\left(f_{j,i}^{\pm }(p^{m_{j,i}}z_2,z_1)\right)x_{j}^{\pm }(z_2)x_{i}^{\pm }(z_1).
\end{split}
\end{equation}
This implies that $x_{i,j}^{\pm}(z_1,z_2)\in\mathcal{E}_{\hbar}^{(2)}(W)$ and \eqref{xsigma} holds for the case that $m=2$.

In what follows we prove \eqref{xsigma} by an induction argument on $m$.
For every $m>2$, by using the induction hypothesis
and \eqref{tildefxij}, we have that
\begin{align*}
&x_{i_{\sigma(1)},i_{\sigma(2)},\dots,i_{\sigma (m+1)}}^{\pm}(z_{\sigma(1)},z_{\sigma(2)},\dots,z_{\sigma (m+1)})\\
=\ &\prod_{\substack{1\leq s<t\leq m\\ \sigma(s)>\sigma(t)}}C_{i_{\sigma(s)},i_{\sigma(t)}}p^{m_{i_{\sigma(s)},i_{\sigma(t)}}}\prod_{k=1}^{m}\iota_{z_{\sigma(k)},z_{\sigma(m+1)}}\left(f_{i_{\sigma(k)},i_{\sigma(m+1)}}^{\pm}(p^{m_{i_{\sigma(k)},i_{\sigma(m+1)}}}z_{\sigma(k)},z_{\sigma(m+1)})\right)\\
&\cdot x_{i_1,\dots,\hat{i}_{\sigma(m+1),\dots,i_{m+1}}}^{\pm}(z_1,\dots,\hat{z}_{\sigma(m+1)},\dots,z_{m+1})x_{i_{\sigma(m+1)}}^{\pm}(z_{\sigma(m+1)})\\
=\ & \prod_{\substack{1\leq s<t\leq m+1\\ \sigma(s)>\sigma(t)}}C_{i_{\sigma(s)},i_{\sigma(t)}}p^{m_{i_{\sigma(s)},i_{\sigma(t)}}}\prod_{k=1}^{m}x_{i_1,\dots,i_{m+1}}^{\pm}(z_1,\dots,z_{m+1}).
\end{align*}
This proves \eqref{xsigma}, which also implies $x_{i_1,i_2,\dots,i_m}^{\pm}(z_1,z_2,\dots,z_m)\in\mathcal{E}_{\hbar}^{(m)}(W)$. Thus we complete the proof.
\end{proof}

For every restricted $\mathcal{E}_N^f$-module $W$,  Lemma \ref{le:xiij} implies that
\[x_{i,j}^{\pm}(a_1z,a_2z),~x_{i,i,j}^{\pm}(a_1z,a_2z,a_3z)\in \mathcal{E}_{\hbar}(W),\]
where $0\leq i,j\leq N-1$ and $a_1,a_2,a_3\in \mathbb{C}[[\hbar]]$.
Furthermore, we have:

 \begin{prpt}\label{pr:rescon}
 Let $W$ be a restricted $\mathcal{E}_N^f$-module of level $\ell$. Then $W$ becomes a restricted $\mathcal{E}_N$-module of level $\ell$ if and only if the following relations hold on $W$:
 \begin{align}\label{xij+-}
 [x_{i}^{+}(z_1),x_{j}^{-}(z_2)]=\frac{\delta_{i,j}}{q-q^{-1}}\left(\phi_{i,q}^{+}(q^{\frac{1}{2}\ell}z_2)\delta\left(\frac{q^{\ell}z_2}{z_1}\right)-\phi_{i,q}^{-}(q^{-\frac{1}{2}\ell}z_2)\delta\left(\frac{q^{-\ell}z_2}{z_1}\right)\right)
 \end{align}
 for $0\leq i,j\leq N-1$,
 \begin{align}\label{serre}
 x_{i,i,j}^{\pm}(qp^{-m_{i,j}}z,q^{-1}p^{-m_{i,j}}z,z)=0
 \end{align}
for $0\leq i,j\leq N-1$ with $b_{i,j}=-1$, and
\begin{align}\label{serre-1}
  x_{i,j}^\pm(z,z)=0
\end{align}
for $0\leq i,j\leq N-1$ with $b_{i,j}=0$.
 \end{prpt}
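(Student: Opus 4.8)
The plan is to realize $\mathcal{E}_N$ as a quotient of $\mathcal{E}_N^f$ and then dispatch the two families of ``new'' relations separately. Since $(\mathrm{Q5})$ implies $(\mathrm{Q}5')$, the algebra $\mathcal{E}_N$ is obtained from $\mathcal{E}_N^f$ by further imposing the relations $(\mathrm{Q5})$ and $(\mathrm{Q7})$ (indeed $\mathcal{E}_N^l=\mathcal{E}_N^f/\overline{\langle(\mathrm{Q5})\rangle}$ and $\mathcal{E}_N=\mathcal{E}_N^l/\overline{\langle(\mathrm{Q7})\rangle}$, with $\overline{\langle\,\cdot\,\rangle}$ the closed two-sided ideal generated). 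Hence a restricted $\mathcal{E}_N^f$-module $W$ of level $\ell$ underlies a restricted $\mathcal{E}_N$-module of level $\ell$ precisely when $(\mathrm{Q5})$ and $(\mathrm{Q7})$ hold as operator identities on $W$; and since $c$ acts as the scalar $\ell$, identity $(\mathrm{Q5})$ on $W$ is literally \eqref{xij+-}. So the proposition comes down to proving: $(\mathrm{Q7})$ holds on $W$ if and only if \eqref{serre} holds whenever $b_{i,j}=-1$ and \eqref{serre-1} holds whenever $b_{i,j}=0$.

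Consider first $b_{i,j}=0$. Then $1-b_{i,j}=1$, and unwinding the one-term permutation sum shows $(\mathrm{Q7})$ is simply $[x_i^{\pm}(z_1),x_j^{\pm}(w)]=0$; moreover $i\ne j$ and $m_{i,j}=0$, so $x_{i,j}^{\pm}(z_1,z_2)=(z_1-z_2)x_i^{\pm}(z_1)x_j^{\pm}(z_2)$. If $(\mathrm{Q7})$ holds, then $x_i^{\pm}(z_1)x_j^{\pm}(z_2)=x_j^{\pm}(z_2)x_i^{\pm}(z_1)$ lies in the intersection of the two iterated Laurent-series spaces (in $z_1,z_2$ and in $z_2,z_1$), hence is regular along $z_1=z_2$; multiplying by $z_1-z_2$ and setting $z_1=z_2=z$ then gives $x_{i,j}^{\pm}(z,z)=0$, which is \eqref{serre-1}. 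Conversely, Lemma \ref{le:xiij} (with $m=2$, applied to the transposition) already yields $(z_1-z_2)[x_i^{\pm}(z_1),x_j^{\pm}(z_2)]=0$ on any restricted $\mathcal{E}_N^f$-module, so $[x_i^{\pm}(z_1),x_j^{\pm}(z_2)]=g(z_2)\delta(z_1/z_2)$ for a unique $g\in\mathcal{E}_\hbar(W)$; expanding $(z_1-z_2)x_i^{\pm}(z_1)x_j^{\pm}(z_2)$ and setting $z_1=z_2=z$ identifies $x_{i,j}^{\pm}(z,z)$ with $z\,g(z)$ up to sign, so \eqref{serre-1} forces $g=0$ and hence $(\mathrm{Q7})$.

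The substantive case is $b_{i,j}=-1$, and here the plan is to pass to $\mathcal{U}_q(\widehat{\mathfrak{sl}}_3)$. Such $i,j$ are cyclically adjacent, so the pair lies in one of the subalgebras $\mathcal{U}_k$, which by Lemma \ref{le:Uiaff} is a quotient of $\mathcal{U}_q^{l}(\widehat{\mathfrak{sl}}_3)$ via \eqref{eq:quomap}; on generating functions \eqref{eq:quomap} reads $e_\epsilon^{\pm}(z)\mapsto x_{k+\epsilon-1}^{\pm}(p^{-(k+\epsilon-1)}z)$ (and similarly for the $\varphi_\epsilon^{\pm}$), so $(\mathrm{Q7})$ for $(i,j)$ becomes exactly the Drinfeld--Serre relation $(\mathrm{L7})$ of $\mathcal{U}_q(\widehat{\mathfrak{sl}}_3)$, and the rescaling $z\mapsto p^{-\bullet}z$ carries the $\mathfrak{sl}_3$-locus $z_1=qw$, $z_2=q^{-1}w$ onto the locus $z_1=qp^{-m_{i,j}}z$, $z_2=q^{-1}p^{-m_{i,j}}z$, $w=z$ of \eqref{serre} (the factor $p^{-m_{i,j}}$ coming from the relative shift $p^{-(k+1)}$ versus $p^{-k}$). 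So it suffices to prove the $\mathfrak{sl}_3$-statement: on a restricted $\mathcal{U}_q^{l}(\widehat{\mathfrak{sl}}_3)$-module of level $\ell$, $(\mathrm{L7})$ holds iff its specialization of the appropriately $f$-twisted triple product at $z_1=qw$, $z_2=q^{-1}w$ vanishes. For that I would (i) use $(\mathrm{L6})$ to see that the $f$-twisted triple product built from $e_i^{\pm},e_i^{\pm},e_j^{\pm}$ defines an element of $\mathcal{E}_\hbar^{(3)}(W)$ whose poles, in the ratios $z_1/z_2$, $z_1/w$, $z_2/w$, occur only at the prescribed $q$-powers; (ii) use $(\mathrm{L6})$ again to rewrite the full $(\mathrm{L7})$-sum, after clearing those $f$-twists, as an invertible scalar times a three-variable field $T^{\pm}(z_1,z_2,w)$ that is symmetric in $z_1,z_2$ up to the $(\mathrm{L6})$-scalar; and (iii) run a delta-function argument showing $T^{\pm}$ is supported on the diagonal locus, whence $T^{\pm}=0$ iff its restriction there vanishes. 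Step (iii) is exactly the reformulation of the Drinfeld-type Serre relations established in \cite{K}, which I would invoke (or re-derive in the present $\hbar$-adic setting).

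The hard part is step (iii): controlling the pole structure of the twisted triple products in all three ratios simultaneously and collapsing the three-variable Serre identity onto the one-parameter locus by delta-function manipulations — this is the real content of the reformulation theorem of \cite{K}. The remaining points, namely checking that the $p$-twists $p^{-m_{i,j}}$ propagate correctly through \eqref{eq:quomap} and that the reduction is uniform over the finitely many adjacent pairs $(i,j)$, are routine bookkeeping once that reformulation is available.
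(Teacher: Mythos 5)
Your proposal is correct and follows essentially the same route as the paper: reduce to showing that $(\mathrm{Q7})$ on $W$ is equivalent to \eqref{serre} and \eqref{serre-1}, dispose of the $b_{i,j}=0$ case directly (you give more detail than the paper, which calls it clear), and for $b_{i,j}=-1$ pass through Lemma \ref{le:Uiaff} to $\mathcal{U}_q^{l}(\widehat{\mathfrak{sl}}_3)$ and invoke the reformulation of the Drinfeld--Serre relations. The only difference is bibliographic: the paper cites \cite[Theorem 5.17]{CJKT} for that reformulation rather than \cite{K}, but this is the same external input you correctly identify as the crux.
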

 \begin{proof}
 Note that $W$ is a restricted $\mathcal{E}_{N}$-module of level $\ell$ if and only if
 \eqref{xij+-}
 and the relation (Q7) holds on $W$. Assume now that $W$ is a  restricted $\mathcal{E}_N^f$-module of level $\ell$ such that $\eqref{xij+-}$ holds on it.
 Thus, it suffices to prove that  (Q7) holds on $W$ if and only if \eqref{serre} and \eqref{serre-1} hold on $W$.
 For the case that $b_{i,j}=0$, it is clear that the relation (Q7) is equivalent to \eqref{serre-1}.
 So we only need to treat the case that $b_{i,j}=-1$.

In this case  $W$ becomes a $\mathcal{E}_N^l$-module, and hence a $\mathcal{U}_i^{l}$-module by taking restriction, where $0\leq i\leq N-1$. Through the quotient map \eqref{eq:quomap}, $W$ becomes a restricted $\mathcal{U}_q^{l}(\widehat{\mathfrak{sl}}_3)$-module of level $\ell$ (see \cite{K} for definitions).
 Then by Lemma \ref{le:Uiaff} and \cite[Theorem 5.17]{CJKT}, we obtain that (Q7) holds if and only if
\begin{align*}
\begin{small}
\left(\left(f_{i,i}^{\pm}(z_1,z_2)f_{i,j}^{\pm}(z_1,z_3)f_{i,j}^{\pm}(z_2,z_3)\right)x_{i,q}^{\pm}(p^{-i}z_1)x_{i,q}^{\pm}(p^{-i}z_2)x_{j,q}^{\pm}(p^{-j}z_3)\right)|_{z_2=q^{- 1}z_3,z_1=qz_3}=0
\end{small}
\end{align*}
for $0\leq i,j\leq N-1$ with $b_{i,j}=-1$.
Note that the left-hand side of the above equality equals to
\[x_{i,i,j}^{\pm}\left(qp^{-j-m_{i,j}}z_3,q^{-1}p^{-j-m_{i,j}}z_3,p^{-j}z_3\right).\]
It follows that (Q7) holds on $W$ if and only if  \eqref{serre} holds  for $0\leq i,j\leq N-1$ with $b_{i,j}=-1$ by substituting $p^{-j}z_3=z$.
 This completes the proof.
 \end{proof}

\section{Quantum vertex algebras associated to $\widehat{\mathfrak{sl}}_\infty$}

 In this section, we introduce an $\hbar$-adic quantum vertex algebra associated to the affine Lie algebra of $\mathfrak{sl}_\infty$.

\subsection{$\hbar$-adic quantum vertex algebra}
In this subsection, we recall some basic notions and results related to $\hbar$-adic quantum vertex algebras.

We first recall the notion of nonlocal vertex algebras (see \cite{Li2}).
\begin{dfnt}\label{de:nonlocalva}
A nonlocal vertex algebra over a commutative ring $R$ is a $R$-module $V_0$, equipped with a $R$-linear map
 \begin{align*}
 Y(\cdot, z):\ V_0&\to \ \mathrm{Hom}_{R}(V_0,V_0((z))),\\
 v&\mapsto Y(v,z)=\sum_{n\in\Z}v_nz^{-n-1},
 \end{align*}
 and equipped with a distinguished vector ${\bf 1}\in V_0$, satisfying the conditions that
 \begin{align*}
 Y({\bf 1},z)v=v, \quad Y(v,z){\bf 1}\in v+zV_0[[z]]\quad\textrm{for }u,v\in V_0,
 \end{align*}
 and that for $u,v,w\in V_0$, there exists $l\in\N$ such that
 \begin{align*}
(z_0+z_2)^{l}Y(u,z_0+z_2)Y(v,z_2)w=(z_0+z_2)^{l}Y(Y(u,z_0)v,z_2)w.
 \end{align*}
 \end{dfnt}

\begin{dfnt}
A vertex algebra is a nonlocal vertex algebra $V_0$ over $\C$ satisfying the following locality condition: for $u,v,w\in V_0$, there exists $k\in\mathbb{N}$ such that
\[(z_1-z_2)^kY(u,z_1)Y(v,z_2)w=(z_1-z_2)^kY(v,z_2)Y(u,z_1)w.\]
 \end{dfnt}

As an $\hbar$-adic analog of Definition \ref{de:nonlocalva}, we also have the following notion (see \cite{Li4}):
 \begin{dfnt}
 An $\hbar$-adic nonlocal vertex algebra is a topologically free $\C[[\hbar]]$-module $V$ equipped with a $\C[[\hbar]]$-module map
 \begin{align*}
  Y(\cdot, z):\ V&\to \mathcal{E}_{\hbar}(V),\\
 v&\mapsto Y(v,z)=\sum_{n\in\Z}v_nz^{-n-1},
 \end{align*}
 and equipped with a distinguished vector ${\bf 1}\in V$, satisfying the conditions that
 \begin{align}\label{vacu}
 Y({\bf 1},z)v=v, \quad Y(v,z){\bf 1}\in v+zV[[z]]\quad\textrm{for }v\in V,
 \end{align}
 and that for $u,v,w\in V$, $n\in\N$, there exists $l\in\N$ such that
 \begin{align}
(z_0+z_2)^{l}Y(u,z_0+z_2)Y(v,z_2)w-(z_0+z_2)^{l}Y(Y(u,z_0)v,z_2)w\in \hbar^nV[[z_0^{\pm 1},z_2^{\pm 1}]].
 \end{align}
\end{dfnt}

Let $V$ be a topologically free $\C[[\hbar]]$-module, which is equipped with a vector ${\bf 1}\in V$ and a $\C[[\hbar]]$-module map
\[ Y(\cdot, z):V\to \mathcal{E}_{\hbar}(V).\]
It is proved in \cite{Li3} that $(V,Y,{\bf 1})$ is an $\hbar$-adic nonlocal vertex algebra if and only if for every positive integer $n$, $(V/\hbar^nV, Y^{(n)},{\bf 1}+\hbar^nV)$ is a nonlocal vertex algebra over $\C[[\hbar]]/\hbar^n\C[[\hbar]]$, where \[Y^{(n)}(\cdot, z):V/\hbar^nV\to \textrm{Hom}_{\C[[\hbar]]/\hbar^n\C[[\hbar]]}(V/\hbar^nV,V/\hbar^nV((z)))\]
is the canonical $\C[[\hbar]]/\hbar^n\C[[\hbar]]$-map induced by $Y$.

\begin{dfnt}
A subset $S$ of an $\hbar$-adic nonlocal vertex algebra $V$ is called a generating subset if for every positive integer $n$,
\[\{a+\hbar^nV\mid a\in S\}\] generates $V/\hbar^nV$ as a nonlocal vertex algebra over $\C[[\hbar]]/\hbar^n\C[[\hbar]]$.
\end{dfnt}
For two $\C[[\hbar]]$-modules $V$ and $W$, denote by $V\widehat{\otimes}W$ the completed tensor product of $V$ and $W$. If $V=V_0[[\hbar]]$ and $W=W_0[[\hbar]]$ for some vector spaces $V_0$ and $W_0$ over $\C$, then we have that
\[V\widehat{\otimes}W=(V_0\otimes W_0)[[\hbar]].\]
 For $A(z_1,z_2),B(z_1,z_2)\in V[[z_1^{\pm1},z_2^{\pm 1}]]$, we write
$A(z_1,z_2)\sim B(z_1,z_2)$ if for every positive integer $n$, there exists $k\in\N$ such that
\[(z_1-z_2)^k\left(A(z_1,z_2)-B(z_1,z_2)\right)\in \hbar^nV[[z_1^{\pm 1},z_2^{\pm 1}]].\]

Now, let $(V,Y,{\bf 1})$ be an $\hbar$-adic nonlocal vertex algebra. Define a $\C[[\hbar]]$-module map
 \begin{align*}
 Y(z):\ V\widehat{\otimes}V\to V[[z,z^{-1}]]
 \end{align*}
by
\[Y(z)(u\otimes v)=Y(u,z)v\quad \textrm{for } u,v\in V.\]
Denote by $\partial$ the following canonical derivation of $V$:
\[\partial:V\rightarrow V,\quad u\mapsto u_{-2}{\bf 1}.\]

The following notion is introduced in \cite{Li4}:

 \begin{dfnt}
 An $\hbar$-adic quantum vertex algebra is an $\hbar$-adic nonlocal vertex algebra $V$ equipped with a $\C[[\hbar]]$-module map
 \begin{align*}
 S(z):\ V\widehat{\otimes}V\to V\widehat{\otimes}V\widehat{\otimes}\C((z))[[\hbar]],
 \end{align*}
 which satisfies the shift condition:
 \begin{align*}
 [\partial\otimes 1,S(z)]=-\frac{d}{dz}S(z),
 \end{align*}
 the quantum Yang-Baxter equation:
\begin{align*}
S^{12}(z_1)S^{13}(z_1+z_2)S^{23}(z_2)=S^{23}(z_2)S^{13}(z_1+z_2)S^{12}(z_1),
 \end{align*}
 and the unitary condition:
 \begin{align*}
 S^{21}(z)S(-z)=1,
 \end{align*}
and subject to the following axioms:
 \begin{enumerate}
     \item The vacuum property:
     \begin{align*}
     S(z)({\bf{1}}\otimes v)={\bf 1}\otimes v \quad\textrm{for }v\in V.
     \end{align*}
     \item The $S$-locality: for every $u,v\in V$, one has that
     \begin{align*}
     Y(u,z_1)Y(v,z_2)\sim Y(z_2)(1\otimes Y(z_1))S(z_2-z_1)(v\otimes u).
     \end{align*}
     \item The hexagon identity:
     \begin{align*}
     S(z_1)(Y(z_2)\otimes 1)=(Y(z_2)\otimes 1)S^{23}(z_1)S^{13}(z_1+z_2).
     \end{align*}
 \end{enumerate}
 \end{dfnt}

Let $G$ be an abstract group. By a  $G$-vertex algebra, we mean a pair $(V_0,\mathcal{R})$, where $V_0$ is a  vertex algebra and $\mathcal{R}$ is a group homomorphism from $G$ to $\mathrm{Aut}(V_0)$. Here, $\mathrm{Aut}(V_0)$ stands for the group of all vertex algebra automorphisms on $V_0$. Similarly, we have the obvious notions of nonlocal $G$-vertex algebras, $\hbar$-adic nonlocal $G$-vertex algebras, and  $\hbar$-adic quantum $G$-vertex algebras.
Note that for every $\hbar$-adic nonlocal $G$-vertex algebra $(V,\mathcal{R})$ and $n\in \Z_+$, $(V/\hbar^n V, \mathcal{R}^{(n)})$ is a nonlocal $G$-vertex algebra over $\C[[\hbar]]/\hbar^n\C[[\hbar]]$, where
\[
\mathcal{R}^{(n)}: G\rightarrow \mathrm{Aut}(V/\hbar^n V)
\] is the group homomorphism induced by $\mathcal{R}$.


 \subsection{New realization of the universal affine vertex algebra associated to  $\mathfrak{sl}_{\infty}$}

Denote by $\mathfrak{gl}_{\infty}$ the Lie algebra of all complex matrices $(d_{i,j})_{i,j\in\Z}$ such
that the number of nonzero $d_{i,j}$'s is finite.  For every $i,j\in\Z$, we write $E_{i,j}$ for the matrix whose only nonzero entry is the $(i,j)$-entry which is $1$. Equip $\mathfrak{gl}_{\infty}$ with the bilinear form $(\cdot,\cdot)$ defined by
\[(E_{i,j},E_{k,l})=\delta_{i,l}\delta_{j,k}\quad \textrm{for }i,j,k,l\in\Z,\]
which is nondegenerate, symmetric and invariant.

Write \[
\mathfrak{sl}_{\infty}=[\mathfrak{gl}_{\infty},\mathfrak{gl}_{\infty}]\] for the derived Lie subalgebra of $\mathfrak{gl}_{\infty}$. It is straightforward to see that $(\cdot,\cdot)$ is nondegenerate, symmetric and invariant  on $\mathfrak{sl}_{\infty}$ as well. Associated to the pair $(\mathfrak{sl}_\infty,(\cdot,\cdot))$, we have the  affine Lie algebra:
\[\widehat{\mathfrak{sl}}_{\infty}=(\mathfrak{sl}_{\infty}\otimes\C[t,t^{-1}])\oplus \C c,\]
where $c$ is a central element and the Lie bracket is given by
\begin{align*}
[a(m), b(n)]=[a,b](m+n)+m\delta_{m+n,0}(a,b)c
\end{align*}
for $a,b\in\mathfrak{sl}_{\infty}$ and $m,n\in\Z$. Here $a(m)$ stands for $a\otimes t^m$.

 Let $A=(a_{i,j})_{i,j\in\Z}$ be the
Cartan matrix associated to $\mathfrak{sl}_{\infty}$, where
 \[a_{i,i}=2,\quad a_{i,i\pm 1}=-1,\quad\textrm{and}\quad a_{i,j}=0\quad\textrm{ for }i,j\in\Z\textrm{ with }j\neq i,i\pm 1.\]
 For every $i\in\Z$, we set
\begin{align*}
H_i=E_{i,i}-E_{i+1,i+1}, \quad X_i^{+}=E_{i,i+1},\quad\textrm{and}\quad   X_i^{-}=E_{i+1,i}.
\end{align*}
Then we have the following straightforward  presentation of $\widehat{\mathfrak{sl}}_{\infty}$, whose proof is omitted (cf. \cite{MRY}).
 \begin{lemt}\label{le:slgen}
 The Lie algebra $\widehat{\mathfrak{sl}}_{\infty}$
 is generated by the set
 \[\{H_i(m),~X_i^{\pm}(m),~c \mid   i,m\in\Z\}\]
 and subject to the relations in terms of generating functions
 \[H_i(z)=\sum_{n\in\Z}H_i(n)z^{-n-1}\quad\textrm{and}\quad X_i^\pm(z)=\sum_{n\in\Z}X_i^\pm(n)z^{-n-1}.\]
The relations are as follows $(i,j\in \Z)$:
 \begin{align}
&\label{eq:HH}\left[H_i(z_1),H_j(z_2)\right]\ =\ a_{i,j}c\frac{\partial}{\partial z_2}z_1^{-1}\delta\left(\frac{z_2}{z_1}\right),\\
 &\left[H_i(z_1),X_j^{\pm}(z_2)\right]\ =\ a_{i,j}X_j^{\pm}(z_2)z_1^{-1}\delta\left(\frac{z_2}{z_1}\right),\\
 &\left[X_i^{+}(z_1),X_j^{-}(z_2)\right]\ =\ \delta_{i,j}\left(H_i(z_2)z_1^{-1}\delta\left(\frac{z_2}{z_1}\right)+c\frac{\partial}{\partial z_2}z_1^{-1}\delta\left(\frac{z_2}{z_1}\right)\right),\\
 &(z_1-z_2)^{1-\delta_{i,j}}\left[X_i^{\pm}(z_1),X_j^{\pm}(z_2)\right]\ =\ 0,\\
 &\label{eq:XXX}\left[X_i^{\pm}(z_1),\left[X_i^\pm(z_2),\dots,\left[X_i^{\pm}(z_{1-a_{i,j}}),X_{j}^{\pm}(w)\right]\cdots\right]\right]=0\quad\textrm{if } a_{i,j}\le 0.
 \end{align}
 \end{lemt}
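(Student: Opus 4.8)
The plan is to identify the Lie algebra $\widetilde{\mathfrak{L}}$ presented by the generators $\{H_i(m),X_i^{\pm}(m),c\}$ and the relations \eqref{eq:HH}--\eqref{eq:XXX} with $\widehat{\mathfrak{sl}}_{\infty}$, following the standard two-step scheme (build a surjection, then prove injectivity), and deducing the delicate injectivity step from the finite-rank ``new realization'' theorem of Moody--Rao--Yokonuma \cite{MRY}.

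First I would verify that the assignment $H_i(m)\mapsto H_i(m)$, $X_i^{\pm}(m)\mapsto X_i^{\pm}(m)$, $c\mapsto c$ respects the relations \eqref{eq:HH}--\eqref{eq:XXX} inside $\widehat{\mathfrak{sl}}_{\infty}$; this is a routine computation using the bracket $[a(m),b(n)]=[a,b](m+n)+m\delta_{m+n,0}(a,b)c$, the identity $(1-z_2/z_1)z_1^{-1}\delta(z_2/z_1)=0$ together with its $z_2$-derivative, and the elementary facts $(H_i,H_j)=a_{i,j}$, $[X_i^{+},X_j^{-}]=\delta_{i,j}H_i$, $(X_i^{+},X_j^{-})=\delta_{i,j}$, the $\mathrm{ad}\,H_i$-action on $X_j^{\pm}$, and the classical Serre relations in $\mathfrak{sl}_{\infty}$. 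This produces a Lie algebra homomorphism $\Phi\colon\widetilde{\mathfrak{L}}\to\widehat{\mathfrak{sl}}_{\infty}$. It is surjective: the $X_i^{\pm}(m)$ already generate $\mathfrak{sl}_{\infty}\otimes\C[t,t^{-1}]$ (every off-diagonal matrix unit $E_{j,k}$ is an iterated bracket of adjacent $X_i^{\pm}$'s, and the diagonal part is recovered from $[X_i^{+}(m),X_i^{-}(0)]=H_i(m)$), while $c=[X_i^{+}(1),X_i^{-}(-1)]-H_i(0)$.

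The heart of the argument is injectivity, which I would obtain by reduction to finite rank. Exhaust $\Z$ by nested finite intervals $\Lambda_1\subset\Lambda_2\subset\cdots$ with $\bigcup_k\Lambda_k=\Z$; let $\mathfrak{g}_k\subset\mathfrak{sl}_{\infty}$ be the simple subalgebra of matrices supported on the corresponding finite index block, so that $\mathfrak{g}_k\cong\mathfrak{sl}_{|\Lambda_k|+1}$ and $\widehat{\mathfrak{sl}}_{\infty}=\bigcup_k\widehat{\mathfrak{g}}_k$ with $\widehat{\mathfrak{g}}_k:=\mathfrak{g}_k\otimes\C[t,t^{-1}]\oplus\C c$. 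Let $\mathfrak{P}_k$ be the Lie algebra presented by the generators $\{H_i(m),X_i^{\pm}(m),c:i\in\Lambda_k\}$ and the relations \eqref{eq:HH}--\eqref{eq:XXX} restricted to $\Lambda_k$. Since $\Lambda_k$ is a connected segment of the $A_{\infty}$ diagram, hence of finite type $A$, this presentation is, up to an elementary reformulation of the relations, the Moody--Rao--Yokonuma presentation of $\widehat{\mathfrak{g}}_k$, so by \cite{MRY} there is an isomorphism $\theta_k\colon\mathfrak{P}_k\to\widehat{\mathfrak{g}}_k$ sending generators to generators. Letting $\mathfrak{L}_k\subseteq\widetilde{\mathfrak{L}}$ be the subalgebra generated by $\{H_i(m),X_i^{\pm}(m),c:i\in\Lambda_k,m\in\Z\}$, the defining relations of $\mathfrak{P}_k$ hold among the corresponding generators of $\mathfrak{L}_k$, yielding a surjection $\pi_k\colon\mathfrak{P}_k\twoheadrightarrow\mathfrak{L}_k$; and since $\Phi|_{\mathfrak{L}_k}\circ\pi_k=\theta_k$ is an isomorphism, $\pi_k$ is an isomorphism and $\Phi|_{\mathfrak{L}_k}$ is injective. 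Now $\mathfrak{L}_1\subseteq\mathfrak{L}_2\subseteq\cdots$ and $\bigcup_k\mathfrak{L}_k=\widetilde{\mathfrak{L}}$ (every generator of $\widetilde{\mathfrak{L}}$ lies in some $\mathfrak{L}_k$), so $\ker\Phi=\bigcup_k\ker(\Phi|_{\mathfrak{L}_k})=0$ and $\Phi$ is an isomorphism.

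The main obstacle is precisely this injectivity, i.e. the completeness of the listed relations; the strategy above outsources it to the finite-rank theorem of \cite{MRY}. The point demanding care in the reduction is that restricting the relations \eqref{eq:HH}--\eqref{eq:XXX} to a connected interval $\Lambda_k$ genuinely reproduces the affine presentation of $\widehat{\mathfrak{g}}_k$ and nothing coarser --- this uses that a connected sub-diagram of $A_{\infty}$ is of finite type, and that all the $\widehat{\mathfrak{g}}_k$ share the single central element $c$ with mutually compatible cocycles (the invariant form on $\mathfrak{sl}_{\infty}$ restricts to the trace form on each $\mathfrak{g}_k$), so that $\varinjlim_k\widehat{\mathfrak{g}}_k=\widehat{\mathfrak{sl}}_{\infty}$ on the nose. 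Alternatively, one can dispense with the reduction and run the triangular-decomposition argument of \cite{MRY} directly for the symmetrizable Kac--Moody datum $A_{\infty}$, since that argument nowhere uses finiteness of the rank.
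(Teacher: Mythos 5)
Your proposal is correct, and it follows exactly the route the paper itself indicates: the paper omits the proof and simply cites \cite{MRY}, whose finite-type loop presentation is precisely what you invoke on each finite connected segment of the $A_\infty$ diagram. The only content beyond that citation is your exhaustion of $\Z$ by finite intervals and the observation that injectivity of $\Phi$ on each $\mathfrak{L}_k$ (forced by $\Phi|_{\mathfrak{L}_k}\circ\pi_k=\theta_k$ being an isomorphism) passes to the union since every element of the presented algebra involves only finitely many indices; this step is sound, and the compatibility of the cocycles that you flag does hold because the trace form on $\mathfrak{sl}_\infty$ restricts to the normalized invariant form on each $\mathfrak{g}_k$.
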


Set $\widehat{\mathfrak{sl}}_{\infty}^{+}=(\mathfrak{sl}_{\infty}\otimes\C[t])\oplus \C c$, and form the induced $\widehat{\mathfrak{sl}}_{\infty}$-module
\[V_{\widehat{\mathfrak{sl}}_{\infty}}(\ell,0)=\mathcal{U}(\widehat{\mathfrak{sl}}_{\infty})\otimes_{\mathcal{U}(\widehat{\mathfrak{sl}}_{\infty}^{+})}\C_{\ell},\]
where $\C_{\ell}$ is the $\widehat{\mathfrak{sl}}_{\infty}^{+}$-module on which $\mathfrak{sl}_{\infty}\otimes\C[t]$ acts trivially and $c$ acts as the scalar $\ell$. It is known (see \cite{LL}) that there is a (unique) vertex algebra structure on $V_{\widehat{\mathfrak{sl}}_{\infty}}(\ell,0)$ such that ${\bf 1}=1\otimes 1$ is the vacuum vector and that
\[Y(a,z)=\sum_{n\in\Z}a(n)z^{-n-1}\quad \textrm{for }a\in \mathfrak{sl}_{\infty}.\]
Here we identify $a$ with $a(-1)\otimes 1$ in $V_{\widehat{\mathfrak{sl}}_{\infty}}(\ell,0)$ for $a\in \mathfrak{sl}_{\infty}$.
For every $n\in\Z$, there is a Lie algebra automorphism
\begin{align*}
\sigma_n:\ \mathfrak{sl}_{\infty}\to \mathfrak{sl}_{\infty}
\end{align*}
given by
\begin{align}\label{eq:sigman}
\sigma_n(E_{i,j})=E_{i+nN,j+nN}\quad \textrm{and}\quad  \sigma_n(E_{i,i}-E_{j,j})=E_{i+nN,i+nN}-E_{j+nN,j+nN}
\end{align}
for $i,j\in\Z$ with $i\neq j$. Note that, viewed as an automorphism of $\mathcal{U}(\widehat{\mathfrak{sl}}_{\infty})$,   $\sigma_{n}$ preserves the subalgebra $\mathcal{U}(\widehat{\mathfrak{sl}}_{\infty}^{+})$. Thus, for every $n\in\Z$, the automorphism $\sigma_n$ of $\mathfrak{sl}_{\infty}$ induces an $\widehat{\mathfrak{sl}}_{\infty}$-module automorphism of $V_{\widehat{\mathfrak{sl}}_{\infty}}(\ell,0)$, which we still denote by $\sigma_n$. One easily checks that $\sigma_n$ is a vertex algebra automorphism of $V_{\widehat{\mathfrak{sl}}_{\infty}}(\ell,0)$ as well.
Then we obtain in this way a group homomorphism
\[\sigma:\ \Z \to \mathrm{Aut}(V_{\widehat{\mathfrak{sl}}_{\infty}}(\ell,0)),\quad n\mapsto \sigma_n,\]
and hence obtain a $\Z$-vertex algebra
\[(V_{\widehat{\mathfrak{sl}}_{\infty}}(\ell,0),\sigma).\]

To deforming the $\Z$-vertex algebra $(V_{\widehat{\mathfrak{sl}}_{\infty}}(\ell,0),\sigma)$, one needs  another realization of  $(V_{\widehat{\mathfrak{sl}}_{\infty}}(\ell,0),\sigma)$ which we state below.

We start with the notion of Lie conformal algebra (see \cite{Kac,DLM}).
\begin{dfnt}
 A conformal Lie algebra $(C,Y^{+},T)$ is a vector space $C$ over $\C$, equipped with a linear operator $T$ on $C$ and a linear map
 \begin{align*}
 Y^{+}(\cdot,z):\ &C\to \mathrm{Hom}_{\C}(C,z^{-1}C[z^{-1}]),\\
 &u\ \mapsto Y^{+}(u,z)=\sum_{n\geq 0}u_nz^{-n-1}
 \end{align*}
 such that for every $u,v\in C$,
 \begin{align*}
 [T,Y^{+}(u,z)]\ &=\ Y^{+}(Tu,z)=\frac{d}{dz}Y^{+}(u,z),\\
 Y^{+}(u,z)v\ &=\ \mathrm{Sing}_{z}\left(e^{zT}Y^{+}(v,-z)u\right),\\
 [Y^{+}(u,z_1),Y^{+}(v,z_2)]\ &=\ \mathrm{Sing}_{z_1}\mathrm{Sing}_{z_2}\left(Y^{+}\left(Y^{+}(u,z_1-z_2)v,z_2\right)\right),
 \end{align*}
 where $\mathrm{Sing}$ stands for the singular part.
\end{dfnt}

 Let $C$ be an arbitrary conformal Lie algebra. Recall that the coefficient algebra of $C$ is a Lie algebra $\widehat{C}$, where
\[\widehat{C}=(C\otimes \C[t,t^{-1}])/\textrm{Im}(T\otimes 1+1\otimes d/dt)\]
as a vector space over $\C$ and the Lie bracket is given by
\[ [u(m),v(n)]\ =\ \sum_{k\geq 0}\binom{m}{k}(u_kv)(m+n-k)\quad\textrm{for }u,v\in C\textrm{ and }m,n\in\Z.\]
Here we write $u(m)$ for the image of $u\otimes t^m$ in $\widehat{C}$. It is known that $\widehat{C}$ admits a polar decomposition of Lie subalgebras (see \cite{DLM}):
\[\widehat{C}=\widehat{C}^{-}\oplus \widehat{C}^{+},\]
 where
\begin{align*}
\widehat{C}^{-}=\textrm{Span}_{\C}\{u(m)\mid u\in C,~m<0\}\quad \textrm{and}\quad \widehat{C}^{+}=\textrm{Span}_{\C}\{u(m)\mid u\in C,~m\geq 0\}.
\end{align*}
Form the induced $\widehat{C}$-module
\begin{align*}
V_C=\mathcal{U}(\widehat{C})\otimes_{\mathcal{U}(\widehat{C}^{+})}\C,
\end{align*}
where $\C$ is the trivial $\widehat{C}^{+}$-module.  It is proved in \cite[Theorem 4.8]{DLM} that there is a (unique) vertex algebra structure on $V_{C}$ such that ${\bf 1}=1\otimes 1\in V_{C}$ is the vacuum vector and the vertex operator map $Y$ is determined by
\begin{align}\label{eq:Vcverop}
Y(u,z)=\sum_{m\in\Z}u(m)z^{-m-1}\quad\textrm{for }u\in C.
\end{align}
Here, we identify $u$ with $u(-1)\otimes 1\in V_{C}$ for $u\in C$.

 Let $\mathcal{B}$ be a set and let
 \[N:\mathcal{B}\times \mathcal{B}\to \N\] be a symmetric function.
 It is proved in \cite[Proposition 3.1]{R} that there is a (unique) conformal Lie algebra $C(\mathcal{B},N)$ such that \begin{itemize}
     \item  $C(\mathcal{B},N)$ contains $\mathcal{B}$ as a generating set, and
     \item the
 coefficient algebra $\widehat{C}(\mathcal{B},N)$ of $C(\mathcal{B},N)$ is abstractly  generated by the set
 \[\{b(n)\mid b\in \mathcal{B},~n\in\Z\},\]
subject to the relations
\begin{align}\label{eq:local}
\sum_{i\geq 0}(-1)^i\binom{N(a,b)}{i}[a(n-i),b(m+i)]=0\quad\textrm{for }a,b\in\mathcal{B}\textrm{ and }m,n\in\Z.
\end{align}
 \end{itemize}

 The following result is given in \cite{R}.
 \begin{lemt}\label{le:vaho}
 Let $V$ be a vertex algebra, and let $f:\mathcal{B}\to V$ be a map such that
  \[(z_1-z_2)^{N(a,b)}[Y(f(a),z_1),Y(f(b),z_2)]=0\quad\textrm{for }a,b\in\mathcal{B}.\]
  Then there exists a unique vertex algebra homomorphism $\hat{f}:V_{C(\mathcal{B},N)}\to V$ such that $\hat{f}|_{\mathcal{B}}=f$.
 \end{lemt}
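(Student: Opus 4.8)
The plan is to construct $\hat f$ first on Fourier coefficients, via the presentation of the coefficient Lie algebra of $C(\mathcal B,N)$ by generators and the relations \eqref{eq:local}, and then to promote the resulting linear map to a vertex algebra homomorphism. Write $C=C(\mathcal B,N)$ and $\widehat C=\widehat C(\mathcal B,N)$, with polar decomposition $\widehat C=\widehat C^-\oplus\widehat C^+$. Uniqueness is immediate: by \cite{R} the set $\mathcal B$ generates $C$ as a conformal Lie algebra, and the Poincar\'e--Birkhoff--Witt theorem applied to the polar decomposition gives $V_C=\mathcal U(\widehat C^-){\bf 1}$, so the vectors $b(-1){\bf 1}$, $b\in\mathcal B$, generate $V_C$ as a vertex algebra; a vertex algebra homomorphism is determined by its values on a generating set.

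For existence I would first extract mode relations from the hypothesis. Put $f(b)_n=\mathrm{Res}_z\,z^nY(f(b),z)\in\mathrm{End}_{\C}(V)$ for $b\in\mathcal B$, $n\in\Z$. Expanding $(z_1-z_2)^{N(a,b)}=\sum_{i\ge0}(-1)^i\binom{N(a,b)}{i}z_1^{N(a,b)-i}z_2^i$ in $(z_1-z_2)^{N(a,b)}[Y(f(a),z_1),Y(f(b),z_2)]=0$ and comparing coefficients of $z_1^{-n-1}z_2^{-m-1}$ yields precisely the relations \eqref{eq:local} for the operators $f(a)_{\bullet}$, $f(b)_{\bullet}$. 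Since $\widehat C$ is, by \cite{R}, the Lie algebra presented by the symbols $b(n)$ subject to \eqref{eq:local}, the assignment $b(n)\mapsto f(b)_n$ extends to a Lie algebra homomorphism $\rho\colon\widehat C\to(\mathrm{End}_{\C}V,[\cdot\,,\cdot])$, making $V$ into a $\widehat C$-module.

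The key step is then to prove, for every $u\in C$, the identity
\[\rho\bigl(u(z)\bigr):=\sum_{m\in\Z}\rho\bigl(u(m)\bigr)z^{-m-1}=Y\bigl(\Phi(u),z\bigr),\qquad\Phi(u):=\rho\bigl(u(-1)\bigr){\bf 1},\]
by induction on the length of an expression for $u$ in terms of $\mathcal B$ under the conformal operations $T$ and $u\otimes v\mapsto u_kv$ ($k\ge0$). The base case $u=b\in\mathcal B$ is the creation property $Y(f(b),z){\bf 1}\in f(b)+zV[[z]]$, which gives $\Phi(b)=f(b)$ and $\rho(b(z))=Y(f(b),z)$. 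The case $u=Tv$ follows from $(Tv)(m)=-m\,v(m-1)$ by applying $\partial_z$ to the identity for $v$. For $u=v_kw$ with $v,w$ shorter, one applies $\rho$ to the coefficient-algebra commutator formula $[v(z_1),w(z_2)]=\sum_{k\ge0}(v_kw)(z_2)\,\frac{1}{k!}\,\partial_{z_2}^{\,k}\,z_1^{-1}\delta(z_2/z_1)$ (a consequence of the bracket relation defining $\widehat C$), compares the result with the vertex-algebra commutator formula for $Y(\Phi(v),z_1)$, $Y(\Phi(w),z_2)$ on $V$, reads off $\rho\bigl((v_kw)(z)\bigr)=Y(\Phi(v)_k\Phi(w),z)$ term by term, and then takes constant terms and evaluates at ${\bf 1}$ to obtain $\Phi(v_kw)=\Phi(v)_k\Phi(w)$. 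A direct consequence is $\rho(u(m)){\bf 1}=\Phi(u)_m{\bf 1}=0$ for all $u\in C$ and all $m\ge0$, i.e.\ $\widehat C^+\cdot{\bf 1}=0$.

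Granting this, the rest is formal. The map $\C\to V$, $1\mapsto{\bf 1}$, is then a homomorphism of $\widehat C^+$-modules, so by Frobenius reciprocity it induces a homomorphism of $\widehat C$-modules $\hat f\colon V_C=\mathcal U(\widehat C)\otimes_{\mathcal U(\widehat C^+)}\C\to V$, $x\otimes1\mapsto\rho(x){\bf 1}$, with $\hat f({\bf 1})={\bf 1}$ and $\hat f(u(-1){\bf 1})=\Phi(u)$; in particular $\hat f|_{\mathcal B}=f$. Finally, using $Y(u(-1){\bf 1},z)=\sum_m u(m)z^{-m-1}$ on $V_C$ from \eqref{eq:Vcverop}, one gets $\hat f\bigl(Y(u(-1){\bf 1},z)w\bigr)=\sum_m\rho(u(m))\hat f(w)z^{-m-1}=Y\bigl(\hat f(u(-1){\bf 1}),z\bigr)\hat f(w)$, so $\hat f$ preserves the vacuum and intertwines the vertex operators of a generating set; hence, propagating along iterated $n$-th products, it intertwines all vertex operators and is a vertex algebra homomorphism. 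I expect the inductive identity $\rho(u(z))=Y(\Phi(u),z)$ to be the main obstacle, since it is exactly what forces the abstract action $\rho$ — a priori compatible only with \eqref{eq:local} — to respect the full conformal structure of $C$ and the polar decomposition of $\widehat C$; everything else then follows automatically. A more black-box alternative would be to regard $V$ as a conformal Lie algebra via the singular parts of its vertex operators, note that the hypothesis says exactly that $f(\mathcal B)$ is pairwise $N$-local there, extend $f$ to a conformal Lie algebra homomorphism $C\to V$ by the universal property of $C(\mathcal B,N)$ from \cite{R}, and extend that in turn to $\hat f$ by the universal property of the assignment $C\mapsto V_C$ from \cite{DLM}.
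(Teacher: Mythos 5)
Your proof is correct. Note that the paper gives no proof of this lemma at all---it simply cites \cite{R}---so there is nothing to compare against line by line; your argument is a self-contained substitute for that citation. The chain you build (expanding the weak-commutativity hypothesis into the mode relations \eqref{eq:local}; invoking the presentation of $\widehat{C}(\mathcal{B},N)$ by exactly those relations to obtain the Lie algebra action $\rho$; the inductive field identity $\rho(u(z))=Y(\Phi(u),z)$ over the conformal operations $T$ and $u_k v$; vanishing of $\widehat{C}^{+}$ on ${\bf 1}$; Frobenius reciprocity for the induced module; and propagation of the intertwining property along a generating set) is the standard one, and each step is sound. The only two points that deserve care are both unproblematic: reading off $\rho((v_kw)(z))=Y(\Phi(v)_k\Phi(w),z)$ ``term by term'' from the two commutator formulas is legitimate because the singular parts are finite, so the $\partial_{z_2}^{k}\delta$-expansion has uniquely determined coefficients; and the induction ``on the length of an expression'' is really an argument that the set of $u$ satisfying an intrinsic identity is a subspace containing $\mathcal{B}$ and closed under $T$ and the $k$-th products, which is exactly what your three cases establish. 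Your closing ``black-box alternative'' (regard $V$ as a conformal Lie algebra via singular parts of vertex operators, apply the universal property of $C(\mathcal{B},N)$ from \cite{R} and then of $C\mapsto V_C$ from \cite{DLM}) is essentially the route the paper's citation has in mind; your longhand version buys a proof that does not presuppose those two universal properties in their strongest form, at the cost of redoing the standard generating-fields bookkeeping.
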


 Write $\mathcal{B}_{\widehat{\mathfrak{sl}}_{\infty}}=\{h_i,~x_i^{\pm}\mid i\in\Z\}$, and define a symmetric function \[N_{\widehat{\mathfrak{sl}}_{\infty}}:\mathcal{B}_{\widehat{\mathfrak{sl}}_{\infty}}\times \mathcal{B}_{\widehat{\mathfrak{sl}}_{\infty}}\to\N\] by setting
 \[N_{\widehat{\mathfrak{sl}}_{\infty}}(h_i,h_j)=2,\  N_{\widehat{\mathfrak{sl}}_{\infty}}(x_i^{\pm},x_j^{\pm})=1-\delta_{i,j},\  N_{\widehat{\mathfrak{sl}}_{\infty}}(h_i,x_j^{\pm })=1, \  N_{\widehat{\mathfrak{sl}}_{\infty}}(x_i^{+},x_j^{-})=2\delta_{i,j}\]
 for $i,j\in\Z$. Then we obtain a conformal Lie algebra $C(\mathcal{B}_{\widehat{\mathfrak{sl}}_{\infty}},N_{\widehat{\mathfrak{sl}}_{\infty}})$ as well as a vertex algebra $V_{C(\mathcal{B}_{\widehat{\mathfrak{sl}}_{\infty}},N_{\widehat{\mathfrak{sl}}_{\infty}})}$ associated to the pair $(\mathcal{B}_{\widehat{\mathfrak{sl}}_{\infty}},N_{\widehat{\mathfrak{sl}}_{\infty}})$.

Form the quotient vertex algebra
\[F(A,\ell)=V_{C(\mathcal{B}_{\widehat{\mathfrak{sl}}_{\infty}},N_{\widehat{\mathfrak{sl}}_{\infty}})}/I_1\]  of $V_{C(\mathcal{B}_{\widehat{\mathfrak{sl}}_{\infty}},N_{\widehat{\mathfrak{sl}}_{\infty}})}$, where
$I_1$ is the ideal of  $V_{C(\mathcal{B}_{\widehat{\mathfrak{sl}}_{\infty}},N_{\widehat{\mathfrak{sl}}_{\infty}})}$ generated by the elements
 \begin{align}\label{eq:genI1}
 (h_i)_0h_j,\quad (h_i)_1(h_j)-a_{i,j}\ell{\bf 1}, \quad \textrm{and}\quad(h_i)_0x_j^{\pm}\mp a_{i,j}x_j^{\pm}\quad\textrm{for }i,j\in \Z.
 \end{align}
For convenience, we still denote the image of a vector $v\in V_{C(\mathcal{B}_{\widehat{\mathfrak{sl}}_{\infty}},N_{\widehat{\mathfrak{sl}}_{\infty}})}$ in $F(A,\ell)$  by itself.

By Lemma \ref{le:vaho}, for every $n\in \Z$,
 there is a vertex algebra homomorphism
 \begin{align*}
 \rho_n^{(1)}:\ V_{C(\mathcal{B}_{\widehat{\mathfrak{sl}}_{\infty}},N_{\widehat{\mathfrak{sl}}_{\infty}})}\to V_{C(\mathcal{B}_{\widehat{\mathfrak{sl}}_{\infty}},N_{\widehat{\mathfrak{sl}}_{\infty}})}
 \end{align*}
 determined by
 \begin{align}\label{dfrho}
 \rho_n^{(1)}(h_i)=h_{i+nN}\quad \textrm{and}\quad \rho_n^{(1)}(x_i^{\pm})=x_{i+nN}^{\pm}\quad \textrm{for }i\in\Z.
 \end{align}
 It is obvious that $\rho_n^{(1)}$ is an automorphism.
Note that $\rho_n^{(1)}$ preserves the ideal $I_1$, and so it induces an automorphism of $F(A,\ell)$, which we still denote by $\rho_n^{(1)}$. Then we obtain  a  $\Z$-vertex algebra  $(F(A,\ell),\rho^{(1)})$, where
 \begin{eqnarray}\label{eq:defrho1}
 \rho^{(1)}:\ \Z \to \mathrm{Aut}(F(A,\ell)),\quad n\mapsto \rho_n^{(1)}.
 \end{eqnarray}

Consider also the quotient vertex algebra
\[F(A,\ell)/I_2,\]
 where  $I_2$ is the ideal of $F(A,\ell)$ generated by the elements
 \begin{equation}\label{eq:genI2}
 \begin{split}
 (x_i^{+})_{0}x_j^{-}-\delta_{i,j}h_i,\quad (x_i^{+})_{1}x_j^{-}-\delta_{i,j}\ell{\bf 1},\quad\left((x_i^{\pm })_{0}\right)^{1-a_{i,j}}x_j^{\pm}
\end{split}
 \end{equation}
for $i,j\in \Z$ with $a_{i,j}\le 0$.
 For every $n\in\Z$, since the automorphism $\rho_n^{(1)}$ preserves the ideal $I_2$, it induces an automorphism, still called $\rho_n^{(1)}$, of $F(A,\ell)/I_2$.
Then we have a $\Z$-vertex algebra  $(F(A,\ell)/I_2,\rho^{(1)})$ as well, where the group homomorphism $\rho^{(1)}$  is as in \eqref{eq:defrho1}.

\begin{prpt}\label{pr:vasl}
The assignment
\[H_i\mapsto h_i\quad\textrm{and}\quad \ X_i^{\pm}\mapsto x_i^{\pm}\quad\textrm{for } i\in\Z\]
determines a $\Z$-vertex algebra  isomorphism from
$(V_{\widehat{\mathfrak{sl}}_{\infty}}(\ell,0),\sigma)$  to $(F(A,\ell)/I_2,\rho^{(1)})$.
\end{prpt}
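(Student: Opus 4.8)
The plan is to construct mutually inverse vertex algebra homomorphisms between the two sides and to observe that they automatically intertwine the $\Z$-actions. Write $W=F(A,\ell)/I_2$ and $C=C(\mathcal{B}_{\widehat{\mathfrak{sl}}_{\infty}},N_{\widehat{\mathfrak{sl}}_{\infty}})$. First I would build the candidate inverse $\Psi\colon W\to V_{\widehat{\mathfrak{sl}}_{\infty}}(\ell,0)$. Apply Lemma \ref{le:vaho} to the map $f\colon\mathcal{B}_{\widehat{\mathfrak{sl}}_{\infty}}\to V_{\widehat{\mathfrak{sl}}_{\infty}}(\ell,0)$, $h_i\mapsto H_i$, $x_i^{\pm}\mapsto X_i^{\pm}$; the locality hypothesis $(z_1-z_2)^{N_{\widehat{\mathfrak{sl}}_{\infty}}(a,b)}[Y(f(a),z_1),Y(f(b),z_2)]=0$ follows directly from the generating-function relations of Lemma \ref{le:slgen} together with the identities $(z_1-z_2)^{k+1}\partial_{z_2}^{k}\big(z_1^{-1}\delta(z_2/z_1)\big)=0$. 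This produces a vertex algebra homomorphism $\widehat f\colon V_{C}\to V_{\widehat{\mathfrak{sl}}_{\infty}}(\ell,0)$. Then I would check that $\widehat f$ annihilates the generators \eqref{eq:genI1} of $I_1$ and, after descent to $F(A,\ell)$, the generators \eqref{eq:genI2} of $I_2$: identifying $a$ with $a(-1){\bf 1}$ and using $a(n){\bf 1}=0$ for $n\ge 0$, one computes in $V_{\widehat{\mathfrak{sl}}_{\infty}}(\ell,0)$ that $(H_i)_0H_j=[H_i,H_j]=0$, $(H_i)_1H_j=(H_i,H_j)\ell{\bf 1}=a_{i,j}\ell{\bf 1}$, $(H_i)_0X_j^{\pm}=[H_i,X_j^{\pm}]=\pm a_{i,j}X_j^{\pm}$, $(X_i^{+})_0X_j^{-}=[X_i^{+},X_j^{-}]=\delta_{i,j}H_i$, $(X_i^{+})_1X_j^{-}=\delta_{i,j}(X_i^{+},X_j^{-})\ell{\bf 1}=\delta_{i,j}\ell{\bf 1}$, and $\big((X_i^{\pm})_0\big)^{1-a_{i,j}}X_j^{\pm}=(\operatorname{ad}X_i^{\pm})^{1-a_{i,j}}X_j^{\pm}=0$ by the Serre relations of $\mathfrak{sl}_{\infty}$. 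Hence $\widehat f$ factors through a vertex algebra homomorphism $\Psi\colon W\to V_{\widehat{\mathfrak{sl}}_{\infty}}(\ell,0)$ with $\Psi(h_i)=H_i$ and $\Psi(x_i^{\pm})=X_i^{\pm}$.

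For the homomorphism in the direction asserted, $\Phi\colon V_{\widehat{\mathfrak{sl}}_{\infty}}(\ell,0)\to W$, I would invoke the standard universal property of the affine vertex algebra (see \cite{LL}): it is enough to verify that in $W$ the fields $Y(h_i,z)$ and $Y(x_i^{\pm},z)$ satisfy all the relations of Lemma \ref{le:slgen}, for then setting $H_i(n)=(h_i)_n$, $X_i^{\pm}(n)=(x_i^{\pm})_n$ and $c=\ell$ makes $W$ a restricted $\widehat{\mathfrak{sl}}_{\infty}$-module of level $\ell$, the vacuum ${\bf 1}$ is annihilated by $\widehat{\mathfrak{sl}}_{\infty}^{+}$ because $Y(v,z){\bf 1}\in W[[z]]$, and the resulting $\widehat{\mathfrak{sl}}_{\infty}$-module map $V_{\widehat{\mathfrak{sl}}_{\infty}}(\ell,0)\to W$, ${\bf 1}\mapsto{\bf 1}$, is a vertex algebra homomorphism that sends $H_i\mapsto h_i$ and $X_i^{\pm}\mapsto x_i^{\pm}$. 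The relation \eqref{eq:HH} and those governing $[H_i,X_j^{\pm}]$ and $[X_i^{+},X_j^{-}]$ are then read off from the commutator formula: all products $u_kv$ with $k$ at least the corresponding value of $N_{\widehat{\mathfrak{sl}}_{\infty}}$ vanish by the locality relations \eqref{eq:local} built into $C$, while the remaining low products are pinned down by the defining elements of $I_1$ and $I_2$ (the same computations as above, now carried out in $W$); and the weak commutativity $(z_1-z_2)^{1-\delta_{i,j}}[X_i^{\pm}(z_1),X_j^{\pm}(z_2)]=0$ is precisely the locality relation \eqref{eq:local} for the pair $(x_i^{\pm},x_j^{\pm})$.

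The step I expect to be the main obstacle is deducing the field-level Serre relation \eqref{eq:XXX} from the vertex-algebra Serre relation $\big((x_i^{\pm})_0\big)^{1-a_{i,j}}x_j^{\pm}=0$ that defines $I_2$. Here I would argue along the lines of the affine case (cf. \cite{LL} and \cite{R}): since $(u_0v)_n=[u_0,v_n]$ in any vertex algebra, the vanishing of $\big((x_i^{\pm})_0\big)^{1-a_{i,j}}x_j^{\pm}$ gives $\big(\operatorname{ad}(x_i^{\pm})_0\big)^{1-a_{i,j}}(x_j^{\pm})_n=0$ on $W$ for all $n$, and one upgrades this single-variable identity to the vanishing of the multivariable iterated bracket in \eqref{eq:XXX} using the weak commutativity among the $X_i^{\pm}(z)$'s and between $X_i^{\pm}(z)$ and $X_j^{\pm}(w)$ already established. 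Once $\Phi$ and $\Psi$ are available, both composites $\Psi\circ\Phi$ and $\Phi\circ\Psi$ fix the respective generating sets $\{H_i,X_i^{\pm}\mid i\in\Z\}$ and $\{h_i,x_i^{\pm}\mid i\in\Z\}$, hence equal the identity, so $\Phi$ is a vertex algebra isomorphism. Finally, $\sigma_n$ sends $H_i\mapsto H_{i+nN}$ and $X_i^{\pm}\mapsto X_{i+nN}^{\pm}$ by \eqref{eq:sigman}, while $\rho_n^{(1)}$ sends $h_i\mapsto h_{i+nN}$ and $x_i^{\pm}\mapsto x_{i+nN}^{\pm}$ by \eqref{dfrho}; hence $\Phi\circ\sigma_n$ and $\rho_n^{(1)}\circ\Phi$ agree on the generators and therefore coincide, so $\Phi$ is an isomorphism of $\Z$-vertex algebras.
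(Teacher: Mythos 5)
Your proposal is correct and follows essentially the same route as the paper: the map $V_{\widehat{\mathfrak{sl}}_{\infty}}(\ell,0)\to F(A,\ell)/I_2$ is obtained by verifying the relations of Lemma \ref{le:slgen} in $F(A,\ell)/I_2$ and invoking the universal property of the induced module, the inverse is obtained from Lemma \ref{le:vaho} followed by factoring through $I_1$ and $I_2$, and both are checked to intertwine $\sigma_n$ and $\rho_n^{(1)}$ on generators. You are merely more explicit than the paper about the verifications it leaves implicit (in particular the passage from the Serre element $\bigl((x_i^{\pm})_0\bigr)^{1-a_{i,j}}x_j^{\pm}=0$ to the field relation \eqref{eq:XXX}, where your sketch via the commutator formula and the established weak commutativity is the standard and correct argument), and you check both composites rather than one composite plus surjectivity.
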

  \begin{proof}
  By Lemma \ref{le:slgen}, \eqref{eq:local}, \eqref{eq:genI1} and \eqref{eq:genI2},  there is an $\widehat{\mathfrak{sl}}_{\infty}$-module structure on the quotient vertex algebra $F(A,\ell)/I_2$ such that
  \begin{align}\label{eq:slac}
H_i(z)\mapsto Y(h_i,z),\quad  X_i^{\pm}(z)\mapsto Y(x_i^{\pm},z),\quad  \textrm{and}\quad   c\mapsto\ell\quad \textrm{for }i\in\Z.
\end{align}
 Since $Y(a,z){\bf 1}\in \left(F(A,\ell)/I_2\right)[[z]]$ for $a\in F(A,\ell)/I_2$, there exists a surjective $\widehat{\mathfrak{sl}}_{\infty}$-module homomorphism
 \[\psi:\ V_{\widehat{\mathfrak{sl}}_{\infty}}(\ell,0)\to F(A,\ell)/I_2\]
such that
 \[\psi(H_i)=h_i\quad \textrm{and}\quad \psi(X_i^{\pm})=x_i^{\pm}\quad\textrm{for } i\in\Z\]
 by \eqref{eq:slac}.
 One easily checks that $\psi$ is a $\Z$-vertex algebra homomorphism as well.

 On the other hand, by Lemma \ref{le:vaho}, we get a vertex algebra
 homomorphism
 \[\psi':\ V_{C(\mathcal{B}_{\widehat{\mathfrak{sl}}_{\infty}},N_{\widehat{\mathfrak{sl}}_{\infty}})}\to V_{\widehat{\mathfrak{sl}}_{\infty}}(\ell,0) \]
 such that
 \[\psi'(h_i)=H_i\quad\textrm{and}\quad \psi'(x_i^{\pm})=X_i^{\pm}\quad \textrm{for }i\in\Z.\]
Then by \eqref{eq:genI1}, we have that $\psi'$ factors through $F(A,\ell)$, and we get a vertex algebra homomorphism $\psi'$ from $F(A,\ell)$ to $V_{\widehat{\mathfrak{sl}}_{\infty}}(\ell,0)$. Furthermore, by \eqref{eq:genI2}, we see that
$\psi'$ also factors through $F(A,\ell)/I_2$, and so we get a vertex algebra homomorphism  $\psi'$ from $F(A,\ell)/I_2$ to $V_{\widehat{\mathfrak{sl}}_{\infty}}(\ell,0)$.

  Since the elements $H_i,X_i^{\pm}$ ($i\in\Z$)  generate the vertex algebra $V_{\widehat{\mathfrak{sl}}_{\infty}}(\ell,0)$, 
 we have that \[\psi'\circ\psi=\textrm{Id}_{V_{\widehat{\mathfrak{sl}}_{\infty}}(\ell,0)},\]
which implies that $\psi$ is injective. The assertion then follows.
  \end{proof}
 \subsection{$\hbar$-adic quantum vertex algebra associated to $\widehat{\mathfrak{sl}}_{\infty}$}
 In this subsection, we construct an $\hbar$-adic quantum $\Z$-vertex algebra associated to  $\widehat{\mathfrak{sl}}_{\infty}$ by deforming the $\Z$-vertex algebra $(V_{\widehat{\mathfrak{sl}}_{\infty}}(\ell,0),\sigma)$.

Let $n$ be an integer. We define the integers $\bar{n}$ and $\underline{n}$ by
\[n=\bar{n}N+\underline{n}\quad \textrm{with}\quad 0\leq \underline{n}\leq N-1.\]
Similar to the definition of quantum integers, we set
\[[n]_{q^{\frac{\partial}{\partial z}}}=\mathrm{Sgn}(n)\left( q^{(n-1)\frac{\partial}{\partial z}}+q^{(n-3)\frac{\partial}{\partial z}}+\cdots+q^{(-n+1)\frac{\partial}{\partial z}}\right),\]
where
\begin{equation*}
\mathrm{Sgn}(n)=    \begin{cases} 1\quad&\text{if}\ n>0;\\
0\quad&\text{if}\ n=0;\\
-1\quad&\text{if}\ n<0.
     \end{cases}
\end{equation*}

Write
\begin{align*}
\tau=\{\tau_{ij}(z),\tau_{ij}^{1,+}(z),\tau_{ij}^{1,-}(z),\tau_{ij}^{2,+}(z),\tau_{ij}^{2,-}(z),\tau_{ij}^{\epsilon_1,\epsilon_2}(z)\mid i,j\in\Z,\epsilon_1,\epsilon_2=\pm\},
\end{align*}
which is a subset of $\C((z))[[\hbar]]$, where
\begin{align}
\label{tauij}\tau_{ij}(z)\ &=\ [b_{\underline{i},\underline{j}}]_{q^{\frac{\partial}{\partial z}}}[\ell]_{q^{\frac{\partial}{\partial z}}}q^{\ell\frac{\partial}{\partial z}}\frac{p^{j-i+m_{\underline{i},\underline{j}}}e^z}{(1-p^{j-i+m_{\underline{i},\underline{j}}}e^z)^2}-a_{i,j}\ell z^{-2},\\
\label{eq:tauij12}\tau_{ij}^{1,\pm}(z)\ &=\ \tau_{ij}^{2,\pm}(z)\ =\ -[b_{\underline{i},\underline{j}}]_{q^{\frac{\partial}{\partial z}}}q^{\ell\frac{\partial}{\partial z}}\frac{1+p^{j-i+m_{\underline{i},\underline{j}}}e^z}{2-2p^{j-i+m_{\underline{i},\underline{j}}}e^z}-a_{i,j}\ell z^{-1},\\
\label{tij+-}\tau_{ij}^{+,-}(z)\ &=\ z^{-\delta_{i,j}}(z+2\ell\hbar)^{\delta_{i,j}},\\
\label{tij-+}\tau_{ij}^{-,+}(z)\ &=\ z^{-\delta_{i,j}}(z-2\ell\hbar)^{\delta_{i,j}}\frac{q^{b_{\underline{i},\underline{j}}}-p^{i-j+m_{\underline{j},\underline{i}}}e^{-z}}{1-q^{b_{\underline{i},\underline{j}}}p^{i-j+m_{\underline{j},\underline{i}}}e^{-z}},\\
\label{tijpm}\tau_{ij}^{\pm,\pm}(z)\ & =\ z^{\delta_{i,j}-1}\frac{(q^{b_{\underline{i},\underline{j}}}p^{i-j-m_{\underline{i},\underline{j}}}e^{-z})^{-\frac{1}{2}}-(q^{b_{\underline{i},\underline{j}}}p^{i-j-m_{\underline{i},\underline{j}}}e^{-z})^{\frac{1}{2}}}{\left((p^{i-j-m_{\underline{i},\underline{j}}}e^{-z})^{-\frac{1}{2}}-(p^{i-j-m_{\underline{i},\underline{j}}}e^{-z})^{\frac{1}{2}}\right)^{\delta_{i,j}}}.
\end{align}
We also set
\begin{align*}
F(z)\ =\ (q^z-q^{-z})/z=\sum_{n\geq 0}\frac{2\hbar^{2n+1}z^{2n}}{(2n+1)!}\in\hbar\C[z^2][[\hbar]].
\end{align*}

We need the following  technical result.

\begin{lemt}\label{le:tau}
For every $i,j\in\Z$, we have that
\begin{align}
\label{tij-tji}&\tau_{ij}(z)-\tau_{ji}(-z)\ =\ [b_{\underline{i},\underline{j}}]_{q^{\frac{\partial}{\partial z}}}[\ell]_{q^{\frac{\partial}{\partial z}}}\left(q^{\ell\frac{\partial}{\partial z}}-q^{-\ell\frac{\partial}{\partial z}}\right)\frac{p^{i-j+m_{\underline{j},\underline{i}}}e^{-z}}{(1-p^{i-j+m_{\underline{j},\underline{i}}}e^{-z})^2},\\
\label{tij+tji}&\tau_{ij}^{1,\pm}(z)+\tau_{ji}^{2,\pm}(-z)\ =\ [b_{\underline{i},\underline{j}}]_{q^{\frac{\partial}{\partial z}}}\left(q^{\ell\frac{\partial}{\partial z}}-q^{-\ell\frac{\partial}{\partial z}}\right)\frac{1+p^{i-j+m_{\underline{j},\underline{i}}}e^{-z}}{2-2p^{i-j+m_{\underline{j},\underline{i}}}e^{-z}},\\
\label{tijtji}&\tau_{ij}^{\pm,\pm}(z)\tau_{ji}^{\pm,\pm}(-z)^{-1}\ =\ \frac{q^{b_{\underline{i},\underline{j}}}-p^{j-i-m_{\underline{j},\underline{i}}}e^{z}}{1-q^{b_{\underline{i},\underline{j}}}p^{j-i-m_{\underline{j},\underline{i}}}e^{z}}=\tau_{ij}^{\pm,\mp}(z)^{-1}\tau_{ji}^{\mp,\pm}(-z),\\
\label{ftij-tji}&F\left(\frac{\partial}{\partial z}\right)\left(\tau_{ij}(z)-\tau_{ji}(-z)\right)\ =\ \left(q^{-\ell\frac{\partial}{\partial z}}-q^{\ell\frac{\partial}{\partial z}}\right)\left(\tau_{ij}^{1,+}(z)+\tau_{ji}^{2,+}(-z)\right),\\
\label{ftij+tji}&F\left(\frac{\partial}{\partial z}\right)\left(\tau_{ij}^{1,\pm}(z)+\tau_{ji}^{2,\pm}(-z)\right)\ =\ \mp\left(q^{\ell\frac{\partial}{\partial z}}-q^{-\ell\frac{\partial}{\partial z}}\right)\mathrm{log}\frac{\tau_{ij}^{+,\pm}(z)}{\tau_{ji}^{\pm,+}(-z)}.
\end{align}
\begin{proof}
The verifications of the first three equalities are straightforward. For the equality \eqref{ftij-tji}, by using  \eqref{tij-tji} and \eqref{tij+tji}, we have that
\begin{align*}
&\left(q^{-\ell\frac{\partial}{\partial z}}-q^{\ell\frac{\partial}{\partial z}}\right)\left(\tau_{ij}^{1,+}(z)+\tau_{ji}^{2,+}(-z)\right)\\
=\ &-F\left(\frac{\partial}{\partial z}\right)\frac{\partial}{\partial z}[\ell]_{q^{\frac{\partial}{\partial z}}}\left(\tau_{ij}^{1,+}(z)+\tau_{ji}^{2,+}(-z)\right)\\
=\ &F\left(\frac{\partial}{\partial z}\right)[b_{\underline{i},\underline{j}}]_{q^{\frac{\partial}{\partial z}}}[\ell]_{q^{\frac{\partial}{\partial z}}}\left(q^{\ell\frac{\partial}{\partial z}}-q^{-\ell\frac{\partial}{\partial z}}\right)\frac{\partial}{\partial z}[\ell]_{q^{\frac{\partial}{\partial z}}}\frac{1+p^{i-j+m_{\underline{j},\underline{i}}}e^{-z}}{2-2p^{i-j+m_{\underline{j},\underline{i}}}e^{-z}}\\
=\ &F\left(\frac{\partial}{\partial z}\right)[b_{\underline{i},\underline{j}}]_{q^{\frac{\partial}{\partial z}}}[\ell]_{q^{\frac{\partial}{\partial z}}}\left(q^{\ell\frac{\partial}{\partial z}}-q^{-\ell\frac{\partial}{\partial z}}\right)\frac{p^{i-j+m_{\underline{j},\underline{i}}}e^{-z}}{(1-p^{i-j+m_{\underline{j},\underline{i}}}e^{-z})^2}.
\end{align*}

For  the last equality, by using equalities \eqref{ftij-tji} and \eqref{tij-tji}, we have that
\begin{align*}
&\mp\left(q^{\ell\frac{\partial}{\partial z}}-q^{-\ell\frac{\partial}{\partial z}}\right)\textrm{log}\frac{\tau_{ij}^{+,\pm}(z)}{\tau_{ji}^{\pm,+}(-z)}\\
=\ &\mp F\left(\frac{\partial}{\partial z}\right)[\ell]_{q^{\frac{\partial}{\partial z}}}\frac{\partial}{\partial z}\textrm{log}\frac{\tau_{ij}^{+,\pm}(z)}{\tau_{ji}^{\pm,+}(-z)}\\
=\ &F\left(\frac{\partial}{\partial z}\right)[\ell]_{q^{\frac{\partial}{\partial z}}}\frac{p^{j-i-m_{\underline{j},\underline{i}}}e^{z}(q^{b_{\underline{i},\underline{j}}}-q^{-b_{\underline{i},\underline{j}}})}{\left(1-q^{-b_{\underline{i},\underline{j}}}p^{j-i-m_{\underline{j},\underline{i}}}e^{z}\right)\left(1-q^{b_{\underline{i},\underline{j}}}p^{j-i-m_{\underline{j},\underline{i}}}e^{z}\right)}\\
=\ &F\left(\frac{\partial}{\partial z}\right)[\ell]_{q^{\frac{\partial}{\partial z}}}\frac{1}{2}\left(-\frac{1+q^{-b_{\underline{i},\underline{j}}}p^{i-j-m_{\underline{i},\underline{j}}}e^{-z}}{1-q^{-b_{\underline{i},\underline{j}}}p^{i-j-m_{\underline{i},\underline{j}}}e^{-z}}+\frac{1+q^{b_{\underline{i},\underline{j}}}p^{i-j-m_{\underline{i},\underline{j}}}e^{-z}}{1-q^{b_{\underline{i},\underline{j}}}p^{i-j-m_{\underline{i},\underline{j}}}e^{-z}}\right)\\
=\ &F\left(\frac{\partial}{\partial z}\right)[\ell]_{q^{\frac{\partial}{\partial z}}}\left(q^{b_{\underline{i},\underline{j}}\frac{\partial}{\partial z}}-q^{-b_{\underline{i},\underline{j}}\frac{\partial}{\partial z}}\right)\frac{1+p^{i-j-m_{\underline{i},\underline{j}}}e^{-z}}{2-2p^{i-j-m_{\underline{i},\underline{j}}}e^{-z}}\\
=\ &F\left(\frac{\partial}{\partial z}\right)[b_{\underline{i},\underline{j}}]_{q^{\frac{\partial}{\partial z}}}\left(q^{\ell\frac{\partial}{\partial z}}-q^{-\ell\frac{\partial}{\partial z}}\right)\frac{1+p^{i-j+m_{\underline{i},\underline{j}}}e^{-z}}{2-2p^{i-j+m_{\underline{i},\underline{j}}}e^{-z}},
\end{align*}
 as desired.
\end{proof}
\end{lemt}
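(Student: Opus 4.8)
The plan is to verify the five identities by direct manipulation of rational functions and of power series in $\frac{\partial}{\partial z}$, reducing \eqref{ftij-tji} and \eqref{ftij+tji} to \eqref{tij-tji}, \eqref{tij+tji} and \eqref{tijtji}. Everything rests on two facts. First, the matrices $B$ and $A$ are symmetric while $M$ is skew-symmetric, so $b_{\underline i,\underline j}=b_{\underline j,\underline i}$, $a_{i,j}=a_{j,i}$, and $p^{i-j+m_{\underline j,\underline i}}=p^{i-j-m_{\underline i,\underline j}}=\big(p^{j-i+m_{\underline i,\underline j}}\big)^{-1}$. Second, since $q=\exp\hbar$, the operator $q^{c\frac{\partial}{\partial z}}=e^{c\hbar\frac{\partial}{\partial z}}$ acts on $\C((z))[[\hbar]]$ as the translation $f(z)\mapsto f(z+c\hbar)$; in particular $q^{c\frac{\partial}{\partial z}}g(ae^{-z})=g(q^{-c}ae^{-z})$ for any rational $g$, and the substitution $z\mapsto -z$ turns $\frac{\partial}{\partial z}$ into $-\frac{\partial}{\partial z}$ and $q^{\ell\frac{\partial}{\partial z}}$ into $q^{-\ell\frac{\partial}{\partial z}}$ while fixing $[b_{\underline i,\underline j}]_{q^{\partial/\partial z}}$ and $[\ell]_{q^{\partial/\partial z}}$ because $[n]_{a}=[n]_{a^{-1}}$. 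I would also record, writing $b$ for $b_{\underline i,\underline j}$ and $a$ for a monomial in $p$, the elementary identities
\[
\frac{ae^{z}}{(1-ae^{z})^{2}}=\frac{a^{-1}e^{-z}}{(1-a^{-1}e^{-z})^{2}},\qquad
\frac{1+ae^{z}}{1-ae^{z}}=-\frac{1+a^{-1}e^{-z}}{1-a^{-1}e^{-z}},\qquad
\frac{q^{b}-a^{-1}e^{-z}}{1-q^{b}a^{-1}e^{-z}}=\frac{q^{b}ae^{z}-1}{ae^{z}-q^{b}},
\]
together with $\frac{d}{dz}\frac{1+ae^{-z}}{2-2ae^{-z}}=-\frac{ae^{-z}}{(1-ae^{-z})^{2}}$ and $\frac{d}{dz}\log\frac{q^{b}-ae^{z}}{1-q^{b}ae^{z}}=\frac{ae^{z}(q^{b}-q^{-b})}{(1-q^{-b}ae^{z})(1-q^{b}ae^{z})}$, each obtained by clearing denominators.

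For \eqref{tij-tji} and \eqref{tij+tji} I would substitute $z\mapsto -z$ in the definitions \eqref{tauij} and \eqref{eq:tauij12}. Since $z^{-2}$ is even and $z^{-1}$ is odd, the polynomial tails $-a_{i,j}\ell z^{-2}$ and $-a_{i,j}\ell z^{-1}$ cancel in $\tau_{ij}(z)-\tau_{ji}(-z)$ and in $\tau_{ij}^{1,\pm}(z)+\tau_{ji}^{2,\pm}(-z)$ respectively; the surviving $q^{\pm\ell\frac{\partial}{\partial z}}$-shifted rational parts are matched by the first two elementary identities above, which identify $\frac{p^{j-i+m_{\underline i,\underline j}}e^{z}}{(1-p^{j-i+m_{\underline i,\underline j}}e^{z})^{2}}$ with $\frac{p^{i-j+m_{\underline j,\underline i}}e^{-z}}{(1-p^{i-j+m_{\underline j,\underline i}}e^{-z})^{2}}$ and $\frac{1+p^{i-j+m_{\underline j,\underline i}}e^{-z}}{2-2p^{i-j+m_{\underline j,\underline i}}e^{-z}}$ with $-\frac{1+p^{j-i+m_{\underline i,\underline j}}e^{z}}{2-2p^{j-i+m_{\underline i,\underline j}}e^{z}}$, whence the stated right-hand sides. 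Identity \eqref{tijtji} is the most routine: substituting $z\mapsto -z$ in \eqref{tij+-}--\eqref{tijpm} and using skew-symmetry of $M$, the prefactors $z^{\pm\delta_{i,j}}$, $(z\pm 2\ell\hbar)^{\pm\delta_{i,j}}$ and the square-root factors recombine, every surplus factor $(-1)^{\delta_{i,j}}$ occurring an even number of times, and the third elementary identity yields the common value $\frac{q^{b_{\underline i,\underline j}}-p^{j-i-m_{\underline j,\underline i}}e^{z}}{1-q^{b_{\underline i,\underline j}}p^{j-i-m_{\underline j,\underline i}}e^{z}}$ for both $\tau_{ij}^{\pm,\pm}(z)\tau_{ji}^{\pm,\pm}(-z)^{-1}$ and $\tau_{ij}^{+,\mp}(z)^{-1}\tau_{ji}^{\mp,+}(-z)$.

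For \eqref{ftij-tji} and \eqref{ftij+tji} I would first establish the operator identities
\[
F\left(\frac{\partial}{\partial z}\right)[\ell]_{q^{\partial/\partial z}}\frac{\partial}{\partial z}=q^{\ell\frac{\partial}{\partial z}}-q^{-\ell\frac{\partial}{\partial z}},
\qquad
[b_{\underline i,\underline j}]_{q^{\partial/\partial z}}\left(q^{\frac{\partial}{\partial z}}-q^{-\frac{\partial}{\partial z}}\right)=q^{b_{\underline i,\underline j}\frac{\partial}{\partial z}}-q^{-b_{\underline i,\underline j}\frac{\partial}{\partial z}},
\]
which follow at once from $F(w)=(q^{w}-q^{-w})/w$ and $[n]_{a}=(a^{n}-a^{-n})/(a-a^{-1})$ by telescoping; here $F(\frac{\partial}{\partial z})$, $[\ell]_{q^{\partial/\partial z}}$ and $[b_{\underline i,\underline j}]_{q^{\partial/\partial z}}$ are genuine power series in $\frac{\partial}{\partial z}$ with coefficients in $\hbar\C[[\hbar]]$ (resp.\ $\C[[\hbar]]$), hence mutually commuting well-defined operators on $\C((z))[[\hbar]]$. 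Then \eqref{ftij-tji} follows by applying $F(\frac{\partial}{\partial z})$ to \eqref{tij-tji}, rewriting $\frac{p^{i-j+m_{\underline j,\underline i}}e^{-z}}{(1-p^{i-j+m_{\underline j,\underline i}}e^{-z})^{2}}=-\frac{\partial}{\partial z}\frac{1+p^{i-j+m_{\underline j,\underline i}}e^{-z}}{2-2p^{i-j+m_{\underline j,\underline i}}e^{-z}}$, commuting the operators so that $F(\frac{\partial}{\partial z})[\ell]_{q^{\partial/\partial z}}\frac{\partial}{\partial z}$ collapses to $q^{\ell\frac{\partial}{\partial z}}-q^{-\ell\frac{\partial}{\partial z}}$, and reading off \eqref{tij+tji} (with the sign supplied by $q^{-\ell\frac{\partial}{\partial z}}-q^{\ell\frac{\partial}{\partial z}}=-(q^{\ell\frac{\partial}{\partial z}}-q^{-\ell\frac{\partial}{\partial z}})$). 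For \eqref{ftij+tji} I would apply $F(\frac{\partial}{\partial z})$ to \eqref{tij+tji}; use \eqref{tijtji} to write $\frac{\partial}{\partial z}\log\frac{\tau_{ij}^{+,\pm}(z)}{\tau_{ji}^{\pm,+}(-z)}=\pm\frac{\partial}{\partial z}\log\frac{q^{b_{\underline i,\underline j}}-p^{j-i-m_{\underline j,\underline i}}e^{z}}{1-q^{b_{\underline i,\underline j}}p^{j-i-m_{\underline j,\underline i}}e^{z}}$ (the sign arising because $\tau_{ij}^{+,-}(z)^{-1}\tau_{ji}^{-,+}(-z)$ is the reciprocal of $\tau_{ij}^{+,+}(z)\tau_{ji}^{+,+}(-z)^{-1}$); compute that derivative by the last elementary identity, rewrite it in $e^{-z}$-form, and decompose it by partial fractions as $\frac{1}{2}\left(\frac{1+q^{b}ae^{-z}}{1-q^{b}ae^{-z}}-\frac{1+q^{-b}ae^{-z}}{1-q^{-b}ae^{-z}}\right)$ with $a=p^{i-j+m_{\underline j,\underline i}}$; re-express the summands via $q^{\mp b_{\underline i,\underline j}\frac{\partial}{\partial z}}\frac{1+ae^{-z}}{1-ae^{-z}}=\frac{1+q^{\pm b_{\underline i,\underline j}}ae^{-z}}{1-q^{\pm b_{\underline i,\underline j}}ae^{-z}}$; and finally apply the two operator identities to rewrite $[\ell]_{q^{\partial/\partial z}}(q^{b_{\underline i,\underline j}\frac{\partial}{\partial z}}-q^{-b_{\underline i,\underline j}\frac{\partial}{\partial z}})$ as $[b_{\underline i,\underline j}]_{q^{\partial/\partial z}}(q^{\ell\frac{\partial}{\partial z}}-q^{-\ell\frac{\partial}{\partial z}})$, matching \eqref{tij+tji} once more. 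The $\mp$ prefactor of \eqref{ftij+tji} is absorbed against the $\pm$ extracted above, so the two sign choices produce the same function.

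Almost every step is a finite computation; the one place I expect to need genuine care is keeping the interlocking $\pm/\mp$ signs in \eqref{ftij+tji} consistent (and, secondarily, pairing off every surplus $(-1)^{\delta_{i,j}}$ in \eqref{tijtji}). It is also worth stating explicitly --- though it is routine --- that $F(\frac{\partial}{\partial z})$, $[\ell]_{q^{\partial/\partial z}}$ and the translations $q^{c\frac{\partial}{\partial z}}$ are honest operators on $\C((z))[[\hbar]]$, so that all the manipulations take place inside a genuine ring.
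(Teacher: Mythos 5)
Your proposal is correct and follows essentially the same route as the paper's proof: the first three identities are checked directly from the symmetry of $B$ and the skew-symmetry of $M$, and the last two are derived by applying $F\left(\frac{\partial}{\partial z}\right)$ together with the operator identities $F\left(\frac{\partial}{\partial z}\right)[\ell]_{q^{\partial/\partial z}}\frac{\partial}{\partial z}=q^{\ell\frac{\partial}{\partial z}}-q^{-\ell\frac{\partial}{\partial z}}$ and $[b_{\underline{i},\underline{j}}]_{q^{\partial/\partial z}}\left(q^{\frac{\partial}{\partial z}}-q^{-\frac{\partial}{\partial z}}\right)=q^{b_{\underline{i},\underline{j}}\frac{\partial}{\partial z}}-q^{-b_{\underline{i},\underline{j}}\frac{\partial}{\partial z}}$, the logarithmic derivative of \eqref{tijtji}, and the same partial-fraction decomposition. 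If anything, your tracking of the interlocking $\pm/\mp$ signs is more careful than the paper's own displayed calculation, which contains a pair of compensating sign slips.
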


We introduce a category $\mathcal{M}_{\tau}$ as follows: an object in
$\mathcal{M}_{\tau}$ is a pair
\[(W,(h_{i,\hbar}(z),x_{i,\hbar}^{\pm}(z))_{i\in\Z}),\] where $W$ is a
topologically free $\C[[\hbar]]$-module and $h_{i,\hbar}(z),x_{i,\hbar}^{\pm}(z)\in\mathcal{E}_{\hbar}(W)$ for $i\in\Z$
such that the following conditions hold ($i,j\in\Z$):
\begin{align}
&\label{eq:hh}[h_{i,\hbar}(z_1), h_{j,\hbar}(z_2)]\\
=\ &a_{i,j}\ell\frac{\partial}{\partial z_2}\delta\left(\frac{z_2}{z_1}\right)+\iota_{z_1,z_2}\tau_{ij}(z_1-z_2)-\iota_{z_2,z_1}\tau_{ji}(z_2-z_1),\nonumber\\
&\label{eq:hx}[h_{i,\hbar}(z_1),x_{j,\hbar}^{\pm}(z_2)]\\
=\ &\pm x_{j,\hbar}^{\pm}(z_2)\left(a_{i,j}z_{1}^{-1}\delta\left(\frac{z_2}{z_1}\right)+\iota_{z_1,z_2}\tau_{ij}^{1,\pm}(z_1-z_2)+\iota_{z_2,z_1}\tau_{ji}^{2,\pm}(z_2-z_1)\right),\nonumber\\
&\label{eq:xxpm}(\iota_{z_1,z_2}\tau_{ij}^{\pm,\pm}(z_1-z_2))(z_1-z_2)^{1-\delta_{i,j}}x_{i,\hbar}^{\pm}(z_1)x_{j,\hbar}^{\pm}(z_2)\\
=\ &(\iota_{z_2,z_1}\tau_{ji}^{\pm,\pm}(z_2-z_1))(z_1-z_2)^{1-\delta_{i,j}}x_{j,\hbar}^{\pm}(z_2)x_{i,\hbar}^{\pm}(z_1),\nonumber\\
&\label{eq:xxmp}(\iota_{z_1,z_2}\tau_{ij}^{\pm,\mp}(z_1-z_2))(z_1-z_2)^{2\delta_{i,j}}x_{i,\hbar}^{\pm}(z_1)x_{j,\hbar}^{\mp}(z_2)\\
=\ &(\iota_{z_2,z_1}\tau_{ji}^{\mp,\pm}(z_2-z_1))(z_1-z_2)^{2\delta_{i,j}}x_{j,\hbar}^{\mp}(z_2)x_{i,\hbar}^{\pm}(z_1).\nonumber
\end{align}
A morphism \[f\colon (W,(h_{i,\hbar}(z),x_{i,\hbar}^{\pm}(z))_{i\in\Z})\to (W',(h'_{i,\hbar}(z),(x_{i,\hbar}^{\pm})'(z))_{i\in\Z})\] of objects in $\mathcal{M}_\tau$ is a $\C[[\hbar]]$-module map
\[f:\ W\to W',\]
such  that
\[f\circ h_{i,\hbar}(z)=h'_{i,\hbar}(z)\circ f\quad\textrm{and}\quad f\circ x_{i,\hbar}^{\pm}(z)=(x_{i,\hbar}^{\pm})'(z)\circ f\quad\textrm{for }i\in\Z.\]

 Let $\mathcal{F}_{\tau}$ be the forgetful functor from $\mathcal{M}_{\tau}$ to the category of topologically free $\C[[\hbar]]$-modules. Define $\textrm{End}_{\C[[\hbar]]}(\mathcal{F}_{\tau})$ to be the algebra of endomorphisms of the functor $\mathcal{F}_{\tau}$. For each $W\in\mathcal{M}_{\tau}$, $\textrm{End}_{\C[[\hbar]]}(W)$ is a topological algebra over $\C[[\hbar]]$ such that
\[\{(K:\hbar^nW)\mid K\subset W,|K|<\infty,n\in\Z_{+}\}\]
forms a local basis at $0$, where
\[(K:\hbar^nW)=\{\varphi\in \textrm{End}_{\C[[\hbar]]}(W)\mid \varphi(K)\subset \hbar^nW\}.\]
Equip $\textrm{End}_{\C[[\hbar]]}(\mathcal{F}_{\tau})$ with the coarsest topology such that for every $W\in\mathcal{M}_{\tau}$, the canonical $\C[[\hbar]]$-algebra epimorphism from $\textrm{End}_{\C[[\hbar]]}(\mathcal{F}_{\tau})$ to $\textrm{End}_{\C[[\hbar]]}(W)$ is continuous. One can check that both $\textrm{End}_{\C[[\hbar]]}(\mathcal{F}_{\tau})$ and $\textrm{End}_{\C[[\hbar]]}(W)$ are topologically free as $\C[[\hbar]]$-modules (cf. \cite[Section 6.2]{CJKT}).

For each $i\in\Z$, we define endomorphisms $h_{i,\hbar}(n)$ and $x_{i,\hbar}^{\pm}(n)$ $(n\in\Z)$  of $\mathcal{F}_{\tau}$ as follows:
\[\sum_{n\in\Z}h_{i,\hbar}(n).vz^{-n-1}=h_{i,\hbar}(z)v\quad \textrm{and}\quad\sum_{n\in\Z}x_{i,\hbar}^{\pm}(n).vz^{-n-1}=x_{i,\hbar}^{\pm}(z)v,\]
where $v\in W$ and $(W,(h_{i,\hbar}(z),x_{i,\hbar}^{\pm}(z))_{i\in\Z})$ is an object of $\mathcal{M}_{\tau}$. Denote by $\mathcal{A}$ the closed subalgebra  of $\textrm{End}_{\C[[\hbar]]}(\mathcal{F}_{\tau})$ generated by
\[\{h_{i,\hbar}(n),x_{i,\hbar}^{\pm}(n)\mid i,n\in\Z \}.\]
Then, as a $\C[[\hbar]]$-module, $\mathcal{A}$ is topologically free (cf. \cite[Section 6.2]{CJKT}).

For a topologically free $\C[[\hbar]]$-module $V$ and a submodule $M$ of $V$, we recall the following notation given in \cite[Definition 3.4]{Li4}:
 \[[M]=\{v\in V\mid \hbar^nv\in M \textrm{ for some }n\in\N\}.\]Let $\mathcal{A}_{+}$ be the minimal closed left ideal of $\mathcal{A}$ such that  $h_{i,\hbar}(n)$ and $x_{i,\hbar}^{\pm}(n)$ ($i\in\Z$, $n\ge 0$) are contained in $\mathcal{A}_{+}$ and that $[\mathcal{A}_{+}]=\mathcal{A}_{+}$. Define
\begin{align}\label{ftau}
F_{\tau}(A,\ell)=\mathcal{A}/\mathcal{A}_{+}.
\end{align}
Set ${\bf 1}=1+\mathcal{A}_{+}\in F_{\tau}(A,\ell)$. For each $i\in\Z$, we identify $h_{i,\hbar}$ and $x_{i,\hbar}^{\pm}$ with $h_{i,\hbar}(-1){\bf 1}$ and $x_{i,\hbar}^{\pm}(-1){\bf 1}$ in $F_{\tau}(A,\ell)$, respectively.

 It is proved in \cite[Proposition 5.3]{K} that  there exists an $\hbar$-adic quantum vertex algebra structure on ${F}_{\tau}(A,\ell)$ such that   the vacuum vector is ${\bf 1}$, the vertex operator map $Y_{\tau}$ is determined by
\begin{align}\label{ytau}
Y_{\tau}(h_{i,\hbar},z)=h_{i,\hbar}(z)\quad \textrm{and}\quad Y_{\tau}(x_{i,\hbar}^{\pm},z)=x_{i,\hbar}^{\pm}(z)\quad\textrm{for }i\in\Z,
\end{align}
and the quantum Yang-Baxter operator $S_{\tau}(z)$ is determined by
\begin{align}
\label{eq:stauhh}S_{\tau}(z)(h_{j,\hbar}\otimes h_{i,\hbar})\ &=\ h_{j,\hbar}\otimes h_{i,\hbar}+{\bf 1}\otimes {\bf 1}\otimes\left(\tau_{ij}(-z)-\tau_{ji}(z)\right),\\
\label{eq:stauxh}S_{\tau}(z)(x_{j,\hbar}^{\pm}\otimes h_{i,\hbar})\ &=\ x_{j,\hbar}^{\pm}\otimes h_{i,\hbar}+x_{j,\hbar}^{\pm}\otimes {\bf 1}\otimes\left(\tau_{ij}^{1,\pm}(-z)+\tau_{ji}^{2,\pm}(z)\right),\\
\label{eq:stauhx}S_{\tau}(z)(h_{j,\hbar}\otimes x_{i,\hbar}^{\pm})\ &=\ h_{j,\hbar}\otimes x_{i,\hbar}^{\pm} +{\bf 1}\otimes x_{i,\hbar}^{\pm}\otimes \left(\tau_{ij}^{2,\pm}(-z)+\tau_{ji}^{1,\pm}(z)\right),\\
\label{eq:stauxx}S_{\tau}(z)(x_{j,\hbar}^{\epsilon_1}\otimes x_{i,\hbar}^{\epsilon_2})\ &=\ x_{j,\hbar}^{\epsilon_1}\otimes x_{i,\hbar}^{\epsilon_2}\otimes\tau_{ji}^{\epsilon_1,\epsilon_2}(z)\tau_{ij}^{\epsilon_2,\epsilon_1}(-z)^{-1}
\end{align}
for $i,j\in\Z$ and $\epsilon_1,\epsilon_2=\pm $.

We also need the following result, which  is proved in \cite[Proposition 5.4]{K}.
\begin{prpt}\label{le:uniofFtau}
Let $V$ be an $\hbar$-adic nonlocal vertex algebra, and let $Y$ be the vertex operator map of $V$. Suppose that there exist $\bar{h}_{i,\hbar}$, $\bar{x}_{i,\hbar}^{\pm}\in V$ $(i\in\Z)$ such that $(V,(Y(\bar{h}_{i,\hbar},z), Y(\bar{x}_{i,\hbar}^{\pm},z))_{i\in\Z})$ is an object of $\mathcal{M}_{\tau}$. Then there exists a unique $\hbar$-adic nonlocal vertex algebra homomorphism $\varphi:F_{\tau}(A,\ell)\to V$ such that
\[\varphi(h_{i,\hbar})=\bar{h}_{i,\hbar}\quad\textrm{and}\quad\varphi(x_{i,\hbar}^{\pm})=\bar{x}_{i,\hbar}^{\pm}\quad\textrm{for } i\in\Z.\]
\end{prpt}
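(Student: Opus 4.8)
The plan is to exploit the defining construction $F_{\tau}(A,\ell)=\mathcal{A}/\mathcal{A}_{+}$. \emph{Uniqueness} is immediate: the vectors $h_{i,\hbar}=h_{i,\hbar}(-1){\bf 1}$ and $x_{i,\hbar}^{\pm}=x_{i,\hbar}^{\pm}(-1){\bf 1}$ ($i\in\Z$) generate $F_{\tau}(A,\ell)$ as an $\hbar$-adic nonlocal vertex algebra by the very construction of the latter (applying $Y_{\tau}$ to these vectors recovers all mode operators, which together with their $\hbar$-adic limits exhaust $F_{\tau}(A,\ell)$), and a homomorphism of $\hbar$-adic nonlocal vertex algebras is determined by its values on a generating set together with the requirement that it send ${\bf 1}$ to ${\bf 1}$.

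For \emph{existence}, I would start from the observation that the hypothesis — that $(V,(Y(\bar h_{i,\hbar},z),Y(\bar x_{i,\hbar}^{\pm},z))_{i\in\Z})$ is an object of $\mathcal{M}_{\tau}$ — is exactly what makes $V$ a legitimate object at which to evaluate endomorphisms of the forgetful functor $\mathcal{F}_{\tau}$. Such evaluation yields a continuous $\C[[\hbar]]$-algebra homomorphism $\pi_{V}\colon \textrm{End}_{\C[[\hbar]]}(\mathcal{F}_{\tau})\to \textrm{End}_{\C[[\hbar]]}(V)$ carrying the mode operators $h_{i,\hbar}(n)$, $x_{i,\hbar}^{\pm}(n)$ to the coefficient operators of $Y(\bar h_{i,\hbar},z)$, $Y(\bar x_{i,\hbar}^{\pm},z)$ on $V$. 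Restricting $\pi_{V}$ to the closed subalgebra $\mathcal{A}$, I would then consider $L=\{a\in\mathcal{A}\mid \pi_{V}(a){\bf 1}_{V}=0\}$. Continuity of $\pi_{V}$ and of evaluation at ${\bf 1}_{V}$, the identity $\pi_{V}(ba){\bf 1}_{V}=\pi_{V}(b)(\pi_{V}(a){\bf 1}_{V})$, and torsion-freeness of $V$ together show that $L$ is a closed left ideal of $\mathcal{A}$ with $[L]=L$. By the creation property of an $\hbar$-adic nonlocal vertex algebra one has $Y(\bar h_{i,\hbar},z){\bf 1}_{V},\, Y(\bar x_{i,\hbar}^{\pm},z){\bf 1}_{V}\in V[[z]]$, so all non-negative modes $h_{i,\hbar}(n)$, $x_{i,\hbar}^{\pm}(n)$ ($n\ge 0$) lie in $L$; by minimality of $\mathcal{A}_{+}$ this forces $\mathcal{A}_{+}\subseteq L$. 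Hence $\varphi(a+\mathcal{A}_{+}):=\pi_{V}(a){\bf 1}_{V}$ is a well-defined, continuous $\C[[\hbar]]$-module map $F_{\tau}(A,\ell)\to V$ with $\varphi({\bf 1})={\bf 1}_{V}$ and $\varphi(h_{i,\hbar})=\bar h_{i,\hbar}$, $\varphi(x_{i,\hbar}^{\pm})=\bar x_{i,\hbar}^{\pm}$.

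Next I would check that $\varphi$ intertwines the vertex operator maps. On a generator $u\in\{h_{i,\hbar},x_{i,\hbar}^{\pm}\}$ this is a one-line computation: by \eqref{ytau} the operator $Y_{\tau}(u,z)$ on $F_{\tau}(A,\ell)=\mathcal{A}/\mathcal{A}_{+}$ is left multiplication by $\sum_{n}u(n)z^{-n-1}$ with $u(n)\in\mathcal{A}$ the corresponding mode, so for $v=b+\mathcal{A}_{+}$ one gets, using that $\pi_{V}$ is an algebra map, $\varphi(Y_{\tau}(u,z)v)=\sum_{n}\pi_{V}(u(n))\pi_{V}(b){\bf 1}_{V}\,z^{-n-1}=Y(\varphi(u),z)\varphi(v)$. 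To promote this from the generators to all of $F_{\tau}(A,\ell)$ I would reduce modulo $\hbar^{n}$ for each $n$ — where, by the reduction principle of \cite{Li3}, everything becomes an ordinary nonlocal vertex algebra — and use the standard fact that the set of $u$ for which $\varphi(Y(u,z)v)=Y(\varphi(u),z)\varphi(v)$ holds for all $v$ is a nonlocal vertex subalgebra; since it contains ${\bf 1}$ and the generators it is the whole space, and passing to the inverse limit over $n$ yields the $\hbar$-adic statement.

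The hard part will be this last ``bootstrapping'' step. Because nonlocal vertex algebras have no skew-symmetry, closure of the set of good $u$ under the products $u_{k}$ cannot be argued as in the classical vertex-algebra case and must instead be extracted from weak associativity, i.e. from the $\hbar$-adic analogue of the $n$-th-product closure arguments of \cite{Li3,Li4}. A second point deserving care is that the precise form of $\mathcal{A}_{+}$ — the \emph{minimal} closed left ideal with $[\mathcal{A}_{+}]=\mathcal{A}_{+}$ containing the non-negative modes — is exactly what is dictated by the creation property of $V$, and is what makes $\varphi$ well defined; pinning down this compatibility is the crux. (If one prefers, the entire statement can simply be invoked as \cite[Proposition~5.4]{K}.)
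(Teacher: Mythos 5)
The paper gives no proof of this proposition --- it is imported wholesale as \cite[Proposition 5.4]{K} --- so there is nothing in-paper to compare against; your argument is the canonical universal-property proof that the construction $F_{\tau}(A,\ell)=\mathcal{A}/\mathcal{A}_{+}$ (as a quotient of a closed subalgebra of $\mathrm{End}_{\C[[\hbar]]}(\mathcal{F}_{\tau})$) is designed to admit, and it is correct. The two delicate points are exactly the ones you isolate: $[L]=L$ does follow from torsion-freeness of the topologically free module $V$, and the passage from the generators $h_{i,\hbar},x_{i,\hbar}^{\pm}$ to all of $F_{\tau}(A,\ell)$ is the standard weak-associativity bootstrap of \cite{Li3,Li4}, which works for nonlocal vertex algebras modulo each $\hbar^{n}$ and hence in the inverse limit. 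I see no gap.
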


Form the quotient $\hbar$-adic nonlocal vertex algebra
\begin{align}
V_{\widehat{\mathfrak{sl}}_{\infty},\hbar}(\ell,0)=F_{\tau}(A,\ell)/R_{\ell},
\end{align}
where $R_{\ell}$ is the minimal closed ideal of $F_{\tau}(A,\ell)$ which satisfies $[R_{\ell}]=R_{\ell}$ and   contains the following elements
\begin{align}
&\label{eq:Rell1}(x_{i,\hbar}^{+})_{0}x_{i,\hbar}^{-}-(q-q^{-1})^{-1}({\bf 1}-E(h_{i,\hbar}))\quad \textrm{for }i\in\Z,\\
&\label{eq:Rell2}(x_{i,\hbar}^{+})_{1}x_{i,\hbar}^{-}+2\ell\hbar(q-q^{-1})^{-1}E(h_{i,\hbar})\quad \textrm{for }i\in\Z,\\
&\label{eq:Rell3}(x_{i,\hbar}^{\pm})_{0}^{1-a_{i,j}}x_{j,\hbar}^{\pm} \quad\textrm{for }i,j\in\Z \textrm{ with }a_{i,j}\le 0.
\end{align}
Here,
\begin{equation}
\begin{split}
E(h_{i,\hbar})\ &=\ \left( \frac{F(1+\ell)}{F(1-\ell)}\right)^{\frac{1}{2}}\textrm{exp}\left(\left(-q^{-\ell\partial}F(\partial)h_{i,\hbar}\right)_{-1}\right){\bf 1}\in F_{\tau}(A,\ell).
\end{split}
\end{equation}

Write $U_{\tau}=\{h_{i,\hbar},x_{i,\hbar}^{\pm}\mid i\in\Z\}$, which is a generating subset of $F_{\tau}(A,\ell)$.
By a similar argument as that in the proofs of \cite[Proposition 6.16, Lemma 6.17, Lemma 6.18]{K}, we obtain from Lemma \ref{le:tau} that
\begin{align*}
S_{\tau}(z)(R_{\ell}\otimes U_{\tau})\quad\textrm{and}\quad S_{\tau}(z)(U_{\tau}\otimes R_{\ell})
\end{align*}
is a subset of
\[R_{\ell}\widehat{\otimes}F_{\tau}(A,\ell)\widehat{\otimes}\C((z))[[\hbar]]+F_{\tau}(A,\ell)\widehat{\otimes}R_{\ell}\widehat{\otimes}\C((z))[[\hbar]],\]
 Then the quantum Yang-Baxter operator $S_{\tau}(z)$ of  $F_{\tau}(A,\ell)$ induces a $\C[[\hbar]]$-module map as follows:
\begin{align}\label{eq:qyboV}
S_{\tau}(z):\ V_{\widehat{\mathfrak{sl}}_{\infty},\hbar}(\ell,0)\widehat{\otimes}V_{\widehat{\mathfrak{sl}}_{\infty},\hbar}(\ell,0)\to  V_{\widehat{\mathfrak{sl}}_{\infty},\hbar}(\ell,0)\widehat{\otimes}V_{\widehat{\mathfrak{sl}}_{\infty},\hbar}(\ell,0)\widehat{\otimes}\C((z))[[\hbar]].
\end{align}
It follows from \cite[Lemma 3.3]{K}  that the  $\hbar$-adic nonlocal vertex algebra $V_{\widehat{\mathfrak{sl}}_{\infty},\hbar}(\ell,0)$, together with the $\C[[\hbar]]$-module map $S_{\tau}(z)$, is an $\hbar$-adic quantum vertex algebra.

On the other hand,
 for every $n\in \Z$, note that \[\left(F_{\tau}(A,\ell), (Y_{\tau}(h_{i+nN,\hbar},z),Y_{\tau}(x_{i+nN,\hbar}^{\pm},z))_{i\in\Z}\right)\]
 is  an object of $\mathcal{M}_{\tau}$. By Proposition \ref{le:uniofFtau}, for every $n\in\Z$, there is an $\hbar$-adic nonlocal vertex algebra homomorphism  \[\rho_{n,\hbar}:\ F_{\tau}(A,\ell)\to F_{\tau}(A,\ell)\]  such that
 \begin{align}\label{eq:rhohn}
\rho_{n,\hbar}(h_{i,\hbar})=h_{i+nN,\hbar}\quad\textrm{and}\quad \rho_{n,\hbar}(x_{i,\hbar}^{\pm})=x_{i+nN,\hbar}^{\pm}\quad \textrm{for }i\in\Z.
\end{align}
 Since $\{h_{i,\hbar},x_{i,\hbar}^{\pm}\mid i\in\Z\}$ is a generating subset of $F_{\tau}(A,\ell)$, we have that $\rho_{n,\hbar}$ is an automorphism of the $\hbar$-adic quantum vertex algebra $F_{\tau}(A,\ell)$.
As $\rho_{n,\hbar}$ preserves the ideal $R_{\ell}$, it induces an automorphism, still call $\rho_{n,\hbar}$, of $V_{\widehat{\mathfrak{sl}}_{\infty},\hbar}(\ell,0)$.
Then we obtain a group homomorphism
\begin{align}\label{eq:grouphomoV}
\rho_{\hbar}:\ \Z\to \textrm{Aut}(V_{\widehat{\mathfrak{sl}}_{\infty},\hbar}(\ell,0)),\quad n\mapsto\rho_{n,\hbar}.
\end{align}

In summand, we obtain the following result.

\begin{thmt}\label{vgl}
 The  $\hbar$-adic nonlocal vertex algebra $V_{\widehat{\mathfrak{sl}}_{\infty},\hbar}(\ell,0)$, together with the $\C[[\hbar]]$-module map $S_{\tau}(z)$ as in \eqref{eq:qyboV} and the group homomorphism $\rho_{\hbar}$ as in \eqref{eq:grouphomoV},
forms an $\hbar$-adic quantum $\Z$-vertex algebra.
\end{thmt}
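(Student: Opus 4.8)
The plan is to read off Theorem~\ref{vgl} from the material already assembled, the only genuinely new point being the compatibility of the automorphisms $\rho_{n,\hbar}$ with the quantum Yang--Baxter operator $S_\tau(z)$. First I would note that the preceding discussion already supplies: (i) $V_{\widehat{\mathfrak{sl}}_{\infty},\hbar}(\ell,0)$ is an $\hbar$-adic nonlocal vertex algebra; (ii) via \cite[Lemma~3.3]{K}, the pair $\big(V_{\widehat{\mathfrak{sl}}_{\infty},\hbar}(\ell,0),S_\tau(z)\big)$ --- with $S_\tau(z)$ the descent \eqref{eq:qyboV} of the operator of $F_\tau(A,\ell)$ --- is an $\hbar$-adic quantum vertex algebra; and (iii) each $\rho_{n,\hbar}$ is an automorphism of the underlying $\hbar$-adic nonlocal vertex algebra of $V_{\widehat{\mathfrak{sl}}_{\infty},\hbar}(\ell,0)$, with $\rho_{m,\hbar}\circ\rho_{n,\hbar}=\rho_{m+n,\hbar}$ and $\rho_{0,\hbar}=\mathrm{Id}$ (both checked on the generating set $U_\tau$, on which they are visibly equal). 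So it remains only to verify that each $\rho_{n,\hbar}$ satisfies
\[
(\rho_{n,\hbar}\otimes\rho_{n,\hbar}\otimes 1)\,S_\tau(z)\ =\ S_\tau(z)\,(\rho_{n,\hbar}\otimes\rho_{n,\hbar})
\]
on $F_\tau(A,\ell)$; this then descends along the quotient map $F_\tau(A,\ell)\to V_{\widehat{\mathfrak{sl}}_{\infty},\hbar}(\ell,0)$, making $\rho_{n,\hbar}$ an automorphism of the $\hbar$-adic quantum vertex algebra $V_{\widehat{\mathfrak{sl}}_{\infty},\hbar}(\ell,0)$ and hence $\rho_\hbar$ a homomorphism into its automorphism group, which is exactly what the definition of an $\hbar$-adic quantum $\Z$-vertex algebra requires.

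The engine of the argument is the translation invariance of the series in $\tau$ under $i\mapsto i+nN$, $j\mapsto j+nN$. I would verify directly from \eqref{tauij}--\eqref{tijpm} that the indices $i,j$ enter only through the residues $\underline{i},\underline{j}$, the difference $j-i$, and the scalars $a_{i,j}$, $b_{\underline{i},\underline{j}}$, $m_{\underline{i},\underline{j}}$, $m_{\underline{j},\underline{i}}$, none of which changes under the shift, whence
\begin{align*}
\tau_{i+nN,j+nN}(z)&=\tau_{ij}(z), & \tau_{i+nN,j+nN}^{1,\pm}(z)&=\tau_{ij}^{1,\pm}(z),\\
\tau_{i+nN,j+nN}^{2,\pm}(z)&=\tau_{ij}^{2,\pm}(z), & \tau_{i+nN,j+nN}^{\epsilon_1,\epsilon_2}(z)&=\tau_{ij}^{\epsilon_1,\epsilon_2}(z)
\end{align*}
for all $i,j,n\in\Z$ and $\epsilon_1,\epsilon_2=\pm$. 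Feeding these into the explicit formulas \eqref{eq:stauhh}--\eqref{eq:stauxx} for $S_\tau(z)$ on $U_\tau=\{h_{i,\hbar},x_{i,\hbar}^{\pm}\mid i\in\Z\}$ and using \eqref{eq:rhohn}, one obtains $(\rho_{n,\hbar}\otimes\rho_{n,\hbar}\otimes 1)S_\tau(z)(a\otimes b)=S_\tau(z)(\rho_{n,\hbar}a\otimes\rho_{n,\hbar}b)$ for all $a,b\in U_\tau$; this is a short bookkeeping check.

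To pass from generators to all of $F_\tau(A,\ell)\widehat{\otimes}F_\tau(A,\ell)$ I would use a transport-of-structure argument: set $S'(z)=(\rho_{n,\hbar}\otimes\rho_{n,\hbar}\otimes 1)\,S_\tau(z)\,(\rho_{n,\hbar}^{-1}\otimes\rho_{n,\hbar}^{-1})$; since $\rho_{n,\hbar}$ is an automorphism of the $\hbar$-adic nonlocal vertex algebra $F_\tau(A,\ell)$, the operator $S'(z)$ again endows $F_\tau(A,\ell)$ with an $\hbar$-adic quantum vertex algebra structure (all the axioms transport), and by the previous paragraph $S'(z)$ agrees with $S_\tau(z)$ on $U_\tau\widehat{\otimes}U_\tau$. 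By the uniqueness built into \cite[Proposition~5.3]{K} --- the quantum Yang--Baxter operator of $F_\tau(A,\ell)$ is determined by its values on $U_\tau\widehat{\otimes}U_\tau$, the hexagon identity (applied in each tensor slot) propagating those values to all products built from $U_\tau$ --- one gets $S'(z)=S_\tau(z)$, which is the desired compatibility. Finally, since $\rho_{n,\hbar}$ preserves the closed ideal $R_\ell$, this compatibility descends and assembles with (i)--(iii) above to yield the theorem. I expect the main obstacle to be exactly this last propagation step --- making precise that $S_\tau$ is pinned down by its generator values --- but it is routine and runs entirely parallel to the computations of \cite[Proposition~5.3, Lemma~6.17, Lemma~6.18]{K}; everything else is a direct inspection of the formulas for $\tau$ and for $S_\tau$.
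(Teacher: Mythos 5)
Your proposal is correct and follows essentially the same route as the paper: Theorem \ref{vgl} is obtained there simply by assembling the preceding constructions, with the key point --- that each $\rho_{n,\hbar}$ is an automorphism of the \emph{quantum} structure, not just of the underlying $\hbar$-adic nonlocal vertex algebra --- justified by the fact that $\{h_{i,\hbar},x_{i,\hbar}^{\pm}\}$ generates $F_{\tau}(A,\ell)$ and $S_{\tau}(z)$ is determined by its values on generators. Your explicit check that every $\tau_{ij}^{\bullet}$ is invariant under $(i,j)\mapsto(i+nN,j+nN)$, together with the transport-of-structure/uniqueness argument propagating the compatibility from $U_{\tau}\widehat{\otimes}U_{\tau}$ to all of $F_{\tau}(A,\ell)\widehat{\otimes}F_{\tau}(A,\ell)$, is exactly the detail the paper leaves implicit, and it is carried out correctly.
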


In what follows, we show that the $\hbar$-adic quantum $\Z$-vertex algebra $(V_{\widehat{\mathfrak{sl}}_{\infty},\hbar}(\ell,0),\rho_{\hbar})$ is a deformation of the $\Z$-vertex algebra $(V_{\widehat{\mathfrak{sl}}_{\infty}}(\ell,0),\sigma)$.
 Note that the natural $\hbar$-adic vertex algebra homomorphism from $F_{\tau}(A,\ell)$ to $V_{\widehat{\mathfrak{sl}}_{\infty},\hbar}(\ell,0)$ induces the following  surjective  nonlocal vertex algebra homomorphism:
\begin{align}\label{eq:quomapF}
F_{\tau}(A,\ell)/\hbar F_{\tau}(A,\ell) \to V_{\widehat{\mathfrak{sl}}_{\infty},\hbar}(\ell,0)/\hbar V_{\widehat{\mathfrak{sl}}_{\infty},\hbar}(\ell,0).
\end{align}
On the other hand, it is proved in \cite[Proposition 5.12]{K} that $F_{\tau}(A,\ell)/\hbar F_{\tau}(A,\ell)$ is a vertex algebra, and so $V_{\widehat{\mathfrak{sl}}_{\infty},\hbar}(\ell,0)/\hbar V_{\widehat{\mathfrak{sl}}_{\infty},\hbar}(\ell,0)$ is a vertex algebra as well.
Furthermore, in view of Theorem \ref{vgl}, we have that \[(V_{\widehat{\mathfrak{sl}}_{\infty},\hbar}(\ell,0)/\hbar V_{\widehat{\mathfrak{sl}}_{\infty},\hbar}(\ell,0),\rho_{\hbar}^{(1)})\] is a $\Z$-vertex algebra, where
\begin{align*}
\rho_{\hbar}^{(1)}:\ &\Z\to \textrm{Aut}(V_{\widehat{\mathfrak{sl}}_{\infty},\hbar}(\ell,0)/\hbar V_{\widehat{\mathfrak{sl}}_{\infty},\hbar}(\ell,0)),\\
&n\mapsto \left(\rho_{n,\hbar}^{(1)}:\ v+\hbar V_{\widehat{\mathfrak{sl}}_{\infty},\hbar}(\ell,0)\mapsto \rho_{n,\hbar}(v)+\hbar V_{\widehat{\mathfrak{sl}}_{\infty},\hbar}(\ell,0)\right)
\end{align*}
is a group homomorphism induced from $\rho_{\hbar}$.

\begin{prpt}
The assignment ($i\in \Z$)
\[H_i\mapsto h_{i,\hbar}+\hbar V_{\widehat{\mathfrak{sl}}_{\infty},\hbar}(\ell,0)\quad \textrm{and}\quad X_i^{\pm}\mapsto x_{i,\hbar}^{\pm}+\hbar V_{\widehat{\mathfrak{sl}}_{\infty},\hbar}(\ell,0)\]
determines a surjective $\Z$-vertex algebra homomorphism from $$(V_{\widehat{\mathfrak{sl}}_{\infty}}(\ell,0),\sigma)\to(V_{\widehat{\mathfrak{sl}}_{\infty},\hbar}(\ell,0)/\hbar V_{\widehat{\mathfrak{sl}}_{\infty},\hbar}(\ell,0),\rho_{\hbar}^{(1)}).$$
\end{prpt}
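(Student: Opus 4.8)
The plan is to exploit the new model of Proposition \ref{pr:vasl}: write $\bar V:=V_{\widehat{\mathfrak{sl}}_{\infty},\hbar}(\ell,0)/\hbar V_{\widehat{\mathfrak{sl}}_{\infty},\hbar}(\ell,0)$ and let a bar denote the image of an element under reduction modulo $\hbar$. It suffices to build a surjective $\Z$-vertex algebra homomorphism $\bar\psi\colon F(A,\ell)/I_2\to\bar V$ with $h_i\mapsto\bar h_{i,\hbar}$ and $x_i^{\pm}\mapsto\bar x_{i,\hbar}^{\pm}$, and then compose with the isomorphism of Proposition \ref{pr:vasl}. Recall that $\bar V$ is a vertex algebra (by \cite[Proposition 5.12]{K} together with \eqref{eq:quomapF}), that $(\bar V,\rho_\hbar^{(1)})$ is a $\Z$-vertex algebra by Theorem \ref{vgl}, and that, since $U_\tau$ generates $F_\tau(A,\ell)$, the set $\{\bar h_{i,\hbar},\bar x_{i,\hbar}^{\pm}\mid i\in\Z\}$ generates $\bar V$.

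First I would reduce the defining relations \eqref{eq:hh}--\eqref{eq:xxmp} of $\mathcal M_\tau$ modulo $\hbar$. The $\hbar^0$-part of each series in $\tau$ has a pole of order at most two at the origin, and Lemma \ref{le:tau}, read at $\hbar=0$, says precisely that $\tau_{ij}(z)\equiv\tau_{ji}(-z)$, $\tau_{ij}^{1,\pm}(z)\equiv-\tau_{ji}^{2,\pm}(-z)$ and $\tau_{ij}^{\pm,\pm}(z)\tau_{ji}^{\pm,\pm}(-z)^{-1}\equiv\tau_{ij}^{\pm,\mp}(z)^{-1}\tau_{ji}^{\mp,\pm}(-z)\equiv 1$ modulo $\hbar$, while \eqref{tij+-} and \eqref{tij-+} supply the remaining ingredient in the $\pm,\mp$ case. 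Plugging these into \eqref{eq:hh}--\eqref{eq:xxmp}, and using that $(z_1-z_2)^{k+1}$ kills the $k$-th derivative of $\delta(z_2/z_1)$, one finds in $\bar V$ that
\[(z_1-z_2)^2[Y(\bar h_{i,\hbar},z_1),Y(\bar h_{j,\hbar},z_2)]=0,\qquad (z_1-z_2)[Y(\bar h_{i,\hbar},z_1),Y(\bar x_{j,\hbar}^{\pm},z_2)]=0,\]
\[(z_1-z_2)^{1-\delta_{i,j}}[Y(\bar x_{i,\hbar}^{\pm},z_1),Y(\bar x_{j,\hbar}^{\pm},z_2)]=0,\qquad (z_1-z_2)^{2\delta_{i,j}}[Y(\bar x_{i,\hbar}^{+},z_1),Y(\bar x_{j,\hbar}^{-},z_2)]=0.\]
These are exactly the vanishing orders recorded by $N_{\widehat{\mathfrak{sl}}_{\infty}}$, so Lemma \ref{le:vaho} produces a vertex algebra homomorphism $\widehat f\colon V_{C(\mathcal B_{\widehat{\mathfrak{sl}}_{\infty}},N_{\widehat{\mathfrak{sl}}_{\infty}})}\to\bar V$ with $\widehat f(h_i)=\bar h_{i,\hbar}$ and $\widehat f(x_i^{\pm})=\bar x_{i,\hbar}^{\pm}$.

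Next I would check that $\widehat f$ annihilates $I_1$, hence descends to $F(A,\ell)$, and then annihilates $I_2$. Modulo $\hbar$ the relations \eqref{eq:hh}, \eqref{eq:hx} become the relations $[Y(\bar h_{i,\hbar},z_1),Y(\bar h_{j,\hbar},z_2)]=a_{i,j}\ell\,\partial_{z_2}\delta(z_2/z_1)$ and $[Y(\bar h_{i,\hbar},z_1),Y(\bar x_{j,\hbar}^{\pm},z_2)]=\pm a_{i,j}\bar x_{j,\hbar}^{\pm}(z_2)\delta(z_2/z_1)$ of Lemma \ref{le:slgen} (the $\tau$-corrections cancelling by Lemma \ref{le:tau}), which by the vacuum axiom force $\widehat f$ to kill $(h_i)_0 h_j$, $(h_i)_1 h_j-a_{i,j}\ell\mathbf 1$ and $(h_i)_0 x_j^{\pm}\mp a_{i,j}x_j^{\pm}$. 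For $I_2$: from $F(\partial)\in 2\hbar+\hbar^3\C[\partial^2][[\hbar]]$ and $(F(1+\ell)/F(1-\ell))^{1/2}\in 1+\hbar^2\C[[\hbar]]$ one gets $E(h_{i,\hbar})\equiv\mathbf 1-(q-q^{-1})h_{i,\hbar}\pmod{\hbar^2}$, whence $(q-q^{-1})^{-1}(\mathbf 1-E(h_{i,\hbar}))\equiv h_{i,\hbar}$ and $2\ell\hbar(q-q^{-1})^{-1}E(h_{i,\hbar})\equiv\ell\mathbf 1$ modulo $\hbar$; combined with $\widehat f((x_i^{+})_0 x_j^{-})=0$ for $i\ne j$ (a consequence of the last of the four displayed relations), this shows that \eqref{eq:Rell1}--\eqref{eq:Rell3} reduce modulo $\hbar$ to the generators \eqref{eq:genI2} of $I_2$. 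Since \eqref{eq:Rell1}--\eqref{eq:Rell3} lie in $R_\ell$, their images vanish in $\bar V$, so $\widehat f$ descends to the desired $\bar\psi\colon F(A,\ell)/I_2\to\bar V$.

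Finally, $\bar\psi$ is surjective since its image contains the generating set $\{\bar h_{i,\hbar},\bar x_{i,\hbar}^{\pm}\}$, and it is $\Z$-equivariant because, by \eqref{dfrho} and \eqref{eq:rhohn}, both $\rho_n^{(1)}$ on $F(A,\ell)/I_2$ and $\rho_{n,\hbar}^{(1)}$ on $\bar V$ act on the respective generators by the shift $i\mapsto i+nN$; hence $\bar\psi\circ\rho_n^{(1)}$ and $\rho_{n,\hbar}^{(1)}\circ\bar\psi$ agree on generators and therefore coincide. I expect the main obstacle to be the analysis in the second paragraph: one must carefully isolate the $\hbar^0$-parts of the intricate $\tau$-series, control the orders of their poles at $z=0$, and verify that the $\iota_{z_1,z_2}$-versus-$\iota_{z_2,z_1}$ discrepancies collapse to exactly the delta-function terms appearing in Lemma \ref{le:slgen} — which is precisely what Lemma \ref{le:tau} delivers when specialized at $\hbar=0$.
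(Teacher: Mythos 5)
Your proposal is correct and reaches the conclusion by the same overall architecture as the paper: produce a surjective vertex algebra map $F(A,\ell)\to V_{\widehat{\mathfrak{sl}}_{\infty},\hbar}(\ell,0)/\hbar V_{\widehat{\mathfrak{sl}}_{\infty},\hbar}(\ell,0)$, show it factors through $I_2$, compose with Proposition \ref{pr:vasl}, and verify $\Z$-equivariance on the generators $h_i,x_i^{\pm}$. The one step you handle genuinely differently is the existence of the map out of $F(A,\ell)$: the paper simply quotes the isomorphism $F(A,\ell)\cong F_{\tau}(A,\ell)/\hbar F_{\tau}(A,\ell)$ of \cite[Proposition 5.12]{K} (see \eqref{eq:isoFFtau}) and composes with \eqref{eq:quomapF}, whereas you rebuild the map from first principles by computing the classical limits of the $\mathcal{M}_{\tau}$-relations \eqref{eq:hh}--\eqref{eq:xxmp}, matching the resulting locality orders with $N_{\widehat{\mathfrak{sl}}_{\infty}}$, and invoking the universal property of the free vertex algebra (Lemma \ref{le:vaho}), then killing $I_1$ and $I_2$ by hand. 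Your route is longer but more self-contained, and it only needs (and only delivers) a surjection rather than the isomorphism of \cite[Proposition 5.12]{K}, which is all the statement requires; note that you still implicitly rely on that reference for the fact that $F_\tau(A,\ell)/\hbar F_\tau(A,\ell)$ is honestly local, so the dependence on \cite{K} is reduced but not eliminated. Your mod-$\hbar$ analysis of the $\tau$-series and of $E(h_{i,\hbar})$ is sound (the small sign discrepancy between the reduction of \eqref{eq:Rell2} and the generator $(x_i^{+})_1x_j^{-}-\delta_{i,j}\ell{\bf 1}$ of \eqref{eq:genI2} is a typographical issue already present in the paper, not a defect of your argument), and the equivariance check via \eqref{dfrho}, \eqref{eq:sigman} and \eqref{eq:rhohn} on generators is exactly the paper's.
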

\begin{proof}
It is proved in \cite[Proposition 5.12]{K} that
\[F(A,\ell)\cong F_{\tau}(A,\ell)/\hbar F_{\tau}(A,\ell)\] as vertex algebras, where   an isomorphism is determined by
\begin{align}\label{eq:isoFFtau}
h_i\mapsto h_{i,\hbar}+\hbar F_{\tau}(A,\ell)\quad\textrm{and}\quad x_i^{\pm}\mapsto x_{i,\hbar}^{\pm}+\hbar F_{\tau}(A,\ell)\quad\textrm{for }i\in\Z.
\end{align}
 This together with \eqref{eq:quomapF} induces a   surjective  vertex algebra homomorphism
\[\pi:\ F(A,\ell)\to V_{\widehat{\mathfrak{sl}}_{\infty},\hbar}(\ell,0)/\hbar V_{\widehat{\mathfrak{sl}}_{\infty},\hbar}(\ell,0). \]
Recall that  $I_2$ is an ideal of vertex algebra $F(A,\ell)$ generated by the elements in \eqref{eq:genI2}.
Then  $\pi$ factors through $F(A,\ell)/I_2$ by the definition of $V_{\widehat{\mathfrak{sl}}_{\infty},\hbar}(\ell,0)$. This together with Proposition \ref{pr:vasl} implies that there is a surjective vertex algebra homomorphism
\[\psi:\ V_{\widehat{\mathfrak{sl}}_{\infty}}(\ell,0)\to V_{\widehat{\mathfrak{sl}}_{\infty},\hbar}(\ell,0)/\hbar V_{\widehat{\mathfrak{sl}}_{\infty},\hbar}(\ell,0),\]
which satisfies the condition that
\begin{align*}
\psi\circ \sigma_n(v)=\rho_{n,\hbar}^{(1)}\circ \psi(v)
\end{align*}
for $n\in\Z$ and $v\in \{H_i,X_i^{\pm }\mid i\in\Z\}$ by \eqref{eq:sigman}, \eqref{eq:rhohn} and  \eqref{eq:isoFFtau}.
 Since
 \[\{H_i,X_i^{\pm }\mid i\in\Z\}\] is a generating subset of $V_{\widehat{\mathfrak{sl}}_{\infty}}(\ell,0)$, we get that $\psi$ is a surjective $\Z$-vertex algebra homomorphism, as desired.

\end{proof}

\section{Correspondences between representations of $\mathcal{E}_N$ and $V_{\widehat{\mathfrak{sl}}_{\infty},\hbar}(\ell,0)$}
 In this section, we establish an isomorphism between the category of restricted $\mathcal{E}_N$-modules of level $\ell$ and the category of equivariant $\phi$-coordinated quasi modules for the $\hbar$-adic quantum $\Z$-vertex algebra $(V_{\widehat{\mathfrak{sl}}_{\infty},\hbar}(\ell,0),\rho_{\hbar})$.

\subsection{Equivariant $\phi$-coordinated quasi module}
Set
\[\phi=\phi(z_1,z)=z_1e^z\in\C((z_1))[[z]],\]
and let $G$ be a group equipped with a linear character $\chi:G\to \C^{\times}$. Here we recall some  basic notions and
results related to $(G,\chi)$-equivariant $\phi$-coordinated quasi modules for $\hbar$-adic nonlocal $G$-vertex algebras.

 We begin with the notion of $\phi$-coordinated quasi module for a nonlocal vertex algebra (see \cite{Li5}).

 \begin{dfnt}
Let $V_0$ be a nonlocal vertex algebra over a commutative ring $R$. A $\phi$-coordinated quasi $V_0$-module is a $R$-module $W_0$   equipped with a $R$-linear map
 \[Y_{W_0}^{\phi}(\cdot,z):\ V_0\to \mathrm{Hom}_{R}(W_0,W_0((z))),\]
 satisfying the condition that
 $Y_{W_0}^{\phi}({\bf 1},z)=1_{W_0}$,
 and for every $u,v\in V_0$, there exists a nonzero polynomial $p(z_1,z_2)\in R[z_1,z_2]$ such that
\[p(z_1,z_2)Y_{W_0}^{\phi}(u,z_1)Y_{W_0}^{\phi}(v,z_2)\in \mathrm{Hom}_{R}(W_0,W_0((z_1,z_2))),\]
 \[\left(p(z_1,z_2)Y_{W_0}^{\phi}(u,z_1)Y_{W_0}^{\phi}(v,z_2)\right)|_{z_1=\phi(z_2,z_0)}=p(\phi(z_2,z_0),z_2)Y_{W_0}^{\phi}(Y(u,z_0)v,z_2).\]
 \end{dfnt}
 For a subgroup $\Gamma$ of $\C^{\times}$, set
 \begin{align}
 \C_{\Gamma}[z]=\langle z-\alpha \mid \alpha\in \Gamma\rangle,
 \end{align}
 the multiplicative monoid generated by $z-\alpha$ ($\alpha\in \Gamma$) in $\C[z]$. We also set
 \begin{align}
 \C_{\Gamma}[z_1,z_2]=\langle z_1-\alpha z_2 \mid \alpha\in \Gamma\rangle,
 \end{align}
 the multiplicative monoid generated by $z_1-\alpha z_2$ ($\alpha\in \Gamma$) in $\C[z_1,z_2]$.

 The following notion is introduced in \cite{Li6}.
 \begin{dfnt}\label{df:equimod}
 Let $(V_0,\mathcal{R})$ be a nonlocal $G$-vertex algebra over $\C[[\hbar]]/\hbar^n\C[[\hbar]]$ for some positive integer $n$.
 A $(G,\chi)$-equivariant $\phi$-coordinated quasi $V_0$-module is a $\phi$-coordinated quasi $V_0$-module $(W_0,Y_{W_0}^{\phi})$
 satisfying the condition that
 \begin{align}\label{eq:modequicon}
 Y_{W_0}^{\phi}(\mathcal{R}_gv,z)=Y_{W_0}^{\phi}(v,\chi(g)^{-1}z)\quad \textrm{for }g\in G,v\in V_0,
 \end{align}
 and that for $u,v\in V_0$, there exists $q(z_1,z_2)\in\C_{\chi(G)}[z_1,z_2]$ such that
 \[q(z_1,z_2)Y_{W_0}^{\phi}(u,z_1)Y_{W_0}^{\phi}(v,z_2)\in\mathrm{Hom}_{\C[[\hbar]]/\hbar^n\C[[\hbar]]}(W_0,W_0((z_1,z_2))).\]
 \end{dfnt}

As an $\hbar$-adic analog of Definition \ref{df:equimod}, we also have the following notion (see \cite{JKLT2}). Recall that for every $\hbar$-adic nonlocal $G$-vertex algebra $(V,\mathcal{R})$ and $n\in \Z_+$, $(V/\hbar^n V,\mathcal{R}^{(n)})$ is a  nonlocal $G$-vertex algebra over $\C[[\hbar]]/\hbar^n\C[[\hbar]]$.
\begin{dfnt}
Let $(V,\mathcal{R})$ be an $\hbar$-adic nonlocal (resp.\,quantum) $G$-vertex algebra. A $(G,\chi)$-equivariant $\phi$-coordinated quasi $V$-module is a topologically free $\C[[\hbar]]$-module $W$ equipped with a $\C[[\hbar]]$-module map
\begin{align}\label{eq:YWphi}
Y_W^{\phi}(\cdot,z):\ V\to\mathcal{E}_{\hbar}(W)
\end{align}
such that for every positive integer $n$, $W/\hbar^nW$ is a $(G,\chi)$-equivariant $\phi$-coordinated quasi $V/\hbar^nV$-module.
\end{dfnt}

For convenience,  in the rest of this subsection, we assume that the character $\chi$ is injective.
 The following result is proved in \cite[Proposition 3.35]{JKLT2}.

\begin{lemt}\label{le:equimod}
Let $(V,\mathcal{R})$ be an $\hbar$-adic nonlocal $G$-vertex algebra and let $(W,Y_W^{\phi})$ be a $(G,\chi)$-equivariant $\phi$-coordinated quasi $V$-module. Assume that
\[u,v\in V,~u^{(i)},v^{(i)}\in V,\; f_{i}(z)\in\C(z)[[\hbar]]\quad (1\leq i\leq r)\]
satisfying that
\[Y(u,z_1)Y(v,z_2)\sim \sum_{i=1}^r\iota_{z_2,z_1}\left( f_{i}(e^{z_2-z_1})\right)Y(v^{(i)},z_2)Y(u^{(i)},z_1).\]
Assume further that for each $n\in\Z_{+}$, $(\mathcal{R}_\sigma u)_jv\in\hbar^nV$ for all but finite many $(\sigma,j)\in G\times \N$. Then
\begin{eqnarray*}
&& Y_W^{\phi}(u,z_1)Y_W^{\phi}(v,z_2)-\sum_{i=1}^r\iota_{z_2,z_1}\left(f_i(z_2/z_1)\right)Y_W^{\phi}(v^{(i)},z_2)Y_W^{\phi}(u^{(i)},z_1)\\
&=&\sum_{\sigma\in G}\sum_{j\geq 0}Y_W^{\phi}((R_\sigma u)_jv,z_2)\frac{1}{j!}\left(z_2\frac{\partial}{\partial z_2}\right)^j\delta\left(\chi(\sigma)^{-1}\frac{z_2}{z_1}\right).
\end{eqnarray*}
\end{lemt}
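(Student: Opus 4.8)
The plan is to reduce the assertion to a statement modulo $\hbar^n$ for every $n$ and then carry out the standard ``meromorphic continuation plus delta-function extraction'' argument that produces commutator formulas in the theory of $\phi$-coordinated quasi modules; indeed the statement is the $\hbar$-adic, equivariant, $\phi$-coordinated incarnation of the $S$-commutator formula, and is precisely \cite[Proposition 3.35]{JKLT2}, whose scheme I would follow.

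First I would fix $n\in\Z_+$ and pass to $V_0=V/\hbar^nV$ and $W_0=W/\hbar^nW$, so that $V_0$ is a nonlocal $G$-vertex algebra over $R=\C[[\hbar]]/\hbar^n\C[[\hbar]]$ and $W_0$ a $(G,\chi)$-equivariant $\phi$-coordinated quasi $V_0$-module; since $W$ is topologically free and all the objects involved are $\hbar$-adically continuous, proving the identity modulo $\hbar^n$ for all $n$ suffices. By the finiteness hypothesis there are a finite subset $\Sigma\subseteq G$ and an $m_0\in\N$ with $(\mathcal R_\sigma u)_jv\in\hbar^nV$ unless $\sigma\in\Sigma$ and $j<m_0$, so modulo $\hbar^n$ the right-hand side is a finite sum over $\sigma\in\Sigma$ and $0\le j<m_0$; injectivity of $\chi$ makes the scalars $\chi(\sigma)^{-1}$ ($\sigma\in\Sigma$) pairwise distinct, so the loci $z_1=\chi(\sigma)^{-1}z_2$ are distinct. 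Next, using the $\phi$-coordinated quasi-module axiom for the pair $(u,v)$ and for each pair $(v^{(i)},u^{(i)})$, together with the rationality of the $f_i$, I would choose a nonzero polynomial $p(z_1,z_2)$ --- a product of linear factors $z_1-\alpha z_2$ with $\alpha$ ranging over a finite subset of $\chi(G)$ and over the finitely many nonzero poles of the $f_i$ --- so that $p(z_1,z_2)Y_W^{\phi}(u,z_1)Y_W^{\phi}(v,z_2)$ and each $p(z_1,z_2)\iota_{z_2,z_1}(f_i(z_2/z_1))Y_W^{\phi}(v^{(i)},z_2)Y_W^{\phi}(u^{(i)},z_1)$ lie in $\mathrm{Hom}_R(W_0,W_0((z_1,z_2)))$.

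The heart of the argument is to show that these two ``cleared'' expressions coincide. Substituting $z_1=\phi(z_2,z_0)=z_2e^{z_0}$, the defining substitution property of $\phi$-coordinated quasi modules carries $p\cdot Y_W^{\phi}(u,z_1)Y_W^{\phi}(v,z_2)$ to $p(z_2e^{z_0},z_2)\,Y_W^{\phi}(Y(u,z_0)v,z_2)$. On the other hand, the weak associativity of the nonlocal vertex algebra $V_0$ gives that $Y(u,z_1)Y(v,z_2)$ has iterate $Y(u,z_0)v$, and hence, by the hypothesis $Y(u,z_1)Y(v,z_2)\sim\sum_i\iota_{z_2,z_1}(f_i(e^{z_2-z_1}))Y(v^{(i)},z_2)Y(u^{(i)},z_1)$, the twisted reversed product shares the same iterate $Y(u,z_0)v$ modulo $\hbar^n$ (after clearing the error by a power of $z_1-z_2$); transporting this through the module axioms --- and noting that under $z_1=z_2e^{z_0}$ the argument $e^{z_2-z_1}$ is matched with $z_2/z_1$ --- shows that the same substitution carries $p\cdot\sum_i\iota_{z_2,z_1}(f_i(z_2/z_1))Y_W^{\phi}(v^{(i)},z_2)Y_W^{\phi}(u^{(i)},z_1)$ to the same $p(z_2e^{z_0},z_2)\,Y_W^{\phi}(Y(u,z_0)v,z_2)$. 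Since $p$ is not a zero-divisor on $\mathrm{Hom}_R(W_0,W_0((z_1,z_2)))$ and the substitution $z_1\mapsto z_2e^{z_0}$ is injective there, the two cleared expressions are equal, so the difference $D(z_1,z_2)$ of the two original operator products is annihilated by $p(z_1,z_2)$. Since the $\sum_i$-piece of $D$ is an honest, already $\iota$-expanded series, while $Y_W^{\phi}(u,z_1)Y_W^{\phi}(v,z_2)$ has, by the quasi-module axiom, singularities only along the loci $z_1=\chi(\sigma)^{-1}z_2$, the factors of $p$ coming from the poles of the $f_i$ are redundant, and in fact $\prod_{\sigma\in\Sigma}(z_1-\chi(\sigma)^{-1}z_2)^{m_0}$ already annihilates $D$.

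Finally I would read off the delta functions. By the standard structure lemma over $R$ --- an element of $\mathrm{Hom}_R(W_0,W_0((z_1))((z_2)))$ annihilated by $\prod_{\sigma}(z_1-\beta_\sigma z_2)^{m_0}$ with distinct $\beta_\sigma$ equals $\sum_{\sigma}\sum_{0\le j<m_0}c_{\sigma,j}(z_2)\,\tfrac{1}{j!}\left(z_2\tfrac{\partial}{\partial z_2}\right)^j\delta\left(\beta_\sigma\tfrac{z_2}{z_1}\right)$ for uniquely determined $c_{\sigma,j}\in\mathcal E_{\hbar}(W_0)$ --- one obtains $D=\sum_{\sigma\in\Sigma}\sum_{0\le j<m_0}c_{\sigma,j}(z_2)\,\tfrac{1}{j!}\left(z_2\tfrac{\partial}{\partial z_2}\right)^j\delta\left(\chi(\sigma)^{-1}\tfrac{z_2}{z_1}\right)$. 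To identify $c_{\sigma,j}$, I would rewrite $Y_W^{\phi}(u,z_1)=Y_W^{\phi}(\mathcal R_\sigma u,\chi(\sigma)z_1)$ by the equivariance condition --- so that the singularity of $Y_W^{\phi}(u,z_1)Y_W^{\phi}(v,z_2)$ at $z_1=\chi(\sigma)^{-1}z_2$ becomes the diagonal singularity attached to the pair $(\mathcal R_\sigma u,v)$ --- apply the substitution property to $(\mathcal R_\sigma u,v)$ along the branch $\chi(\sigma)z_1=z_2e^{z_0}$, and extract the relevant formal residue in $z_0$; this produces $Y_W^{\phi}(Y(\mathcal R_\sigma u,z_0)v,z_2)=\sum_{k\ge0}Y_W^{\phi}((\mathcal R_\sigma u)_kv,z_2)z_0^{-k-1}$, whence $c_{\sigma,j}(z_2)=Y_W^{\phi}((\mathcal R_\sigma u)_jv,z_2)$, which is exactly the asserted identity modulo $\hbar^n$; letting $n\to\infty$ completes the proof. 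I expect the main obstacle to be the middle paragraph: transporting the $S$-locality on $V$ faithfully through the $\phi$-coordinate change and bookkeeping precisely which loci $z_1=\alpha z_2$ can carry delta functions (only those with $\alpha\in\chi(G)^{-1}$, not the poles of the $f_i$), uniformly in the $\hbar$-adic truncation level --- this is the technical core of \cite[Proposition 3.35]{JKLT2}.
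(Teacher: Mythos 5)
The paper does not actually prove this lemma: it is quoted verbatim from \cite[Proposition 3.35]{JKLT2}, so there is no in-paper argument to compare yours against. Judged on its own, your outline follows the standard scheme for such commutator formulas (reduce mod $\hbar^n$, use the finiteness hypothesis and injectivity of $\chi$ to localize the singular support, clear denominators, show the cleared difference vanishes, decompose into delta functions, identify coefficients via the substitution axiom applied to the pairs $(\mathcal{R}_\sigma u,v)$), and the first, third and fourth paragraphs are essentially right; in particular the use of equivariance $Y_W^{\phi}(u,z_1)=Y_W^{\phi}(\mathcal{R}_\sigma u,\chi(\sigma)z_1)$ to read off $c_{\sigma,j}=Y_W^{\phi}((\mathcal{R}_\sigma u)_jv,z_2)$ is exactly the right mechanism.

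The genuine gap is the one you yourself flag: the claim that the substitution $z_1=z_2e^{z_0}$ carries $p\cdot\sum_i\iota_{z_2,z_1}(f_i(z_2/z_1))Y_W^{\phi}(v^{(i)},z_2)Y_W^{\phi}(u^{(i)},z_1)$ to $p(z_2e^{z_0},z_2)Y_W^{\phi}(Y(u,z_0)v,z_2)$ is asserted, not proved. The module axiom only provides the substitution property for a product in the written order, with the \emph{outer} variable specialized to $\phi(\text{inner},z_0)$; applied to the reversed product it yields $Y_W^{\phi}(Y(v^{(i)},w)u^{(i)},z_1)$ along $z_2=z_1e^{w}$, which sits at the wrong center and involves the wrong iterate. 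Converting this into the desired statement requires either a skew-symmetry/translation-covariance argument for $\phi$-coordinated quasi modules or (as in \cite{JKLT2}) passage through the weak quantum vertex algebra $\langle U_W\rangle_{\phi}$ generated by the module vertex operators, where $Y_{\mathcal{E}}^{\phi}$ is compatible with both products and iterates (this is precisely the role of \cite[Theorem 3.31, Proposition 3.33]{JKLT2}, quoted here as Lemma \ref{le:Yephi}); ``transporting this through the module axioms'' is a placeholder for that entire mechanism. A second, smaller defect: your ``standard structure lemma'' is misstated. A nonzero element of $\mathrm{Hom}_{R}(W_0,W_0((z_1))((z_2)))$ cannot be annihilated by $\prod_\sigma(z_1-\beta_\sigma z_2)^{m_0}$, since each factor $z_1-\beta z_2$ is invertible in $W_0((z_1))((z_2))$; the lemma applies to a formal series that is a difference $A_1-A_2$ with $A_1\in \mathrm{Hom}_{R}(W_0,W_0((z_1))((z_2)))$ and $A_2\in \mathrm{Hom}_{R}(W_0,W_0((z_2))((z_1)))$, which is what $D$ is. As literally written your invocation would force $D=0$.
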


 In what follows, we  introduce a general construction of $\hbar$-adic nonlocal $G$-vertex algebras. Let $W$ be a fixed topologically free $\C[[\hbar]]$-module. For $n,k\in\Z_{+}$, the quotient map from $W$ to $W/ \hbar^n W$ induces a $\C[[\hbar]]$-module map
\begin{align*}
\pi_n^{(k)}:\ (\textrm{End}_{\C[[\hbar]]}W)[[z_1^{\pm 1},z_2^{\pm 1},\dots,z_k^{\pm 1}]]\to (\textrm{End}_{\C[[\hbar]]}W/ \hbar^{n}W)[[z_1^{\pm 1},z_2^{\pm 1},\dots,z_k^{\pm 1}]].
\end{align*}
For convenience, we write $\pi_n=\pi_n^{(1)}$.
\begin{dfnt}
 A subset $U$ of $\mathcal{E}_{\hbar}(W)$ is said to be $\hbar$-adically $(G,\chi)$-quasi compatible if for every $a_{1}(z),a_{2}(z),\dots,a_k(z)\in\mathcal{E}_{\hbar}(W)$ $(k\in\Z_{+})$ and every positive integer $n$, there exists $f(z_1,z_2)\in\C_{\chi(G)}[z_1,z_2]$ such that
\begin{align*}
\left(\prod_{1\leq i<j\leq k}f(z_i,z_j)\right)\pi_n(a_1(z_1))\pi_n(a_2(z_2))\cdots\pi_n(a_k(z_k))\in \mathcal{E}^{(k)}(W/\hbar^nW),
\end{align*}
where
\[\mathcal{E}^{(k)}(W/\hbar^nW)=\mathrm{Hom}_{\C}(W/\hbar^nW,W/\hbar^nW((z_1,z_2,\dots,z_k))).\]
In addition, we say that a subset $U$ of $\mathcal{E}_{\hbar}(W)$ is $\hbar$-adically quasi compatible if $U$ is $\hbar$-adically $(\C^{\times},1)$-quasi compatible.
\end{dfnt}

 Let $(a(z),b(z))$ be an $\hbar$-adically $(G,\chi)$-quasi compatible pair in $\mathcal{E}_{\hbar}(W)$. Then for every positive integer $n$, there exists $f_n(z_1,z_2)\in\C_{\chi(G)}[z_1,z_2]$ such that
\[f_n(z_1,z_2)\pi_n(a(z_1))\pi_n(b(z_2))\in \mathcal{E}^{(2)}(W/\hbar^nW).\]
Recall the following vertex operator introduced in \cite{JKLT2}:
\begin{align*}
Y_{\mathcal{E}}^{\phi}(a(z),z_0)b(z)=&\ \sum_{n\in\Z}a(z)_{n}^{\phi}b(z)z_0^{-n-1}\\
= &\  \varprojlim_{n>0}(f_n(\phi(z,z_0),z))^{-1}\left( f_n(z_1,z)\pi_n(a(z_1))\pi_n(b(z))\right)|_{z_1=\phi(z,z_0)}.\nonumber
\end{align*}

Suppose now that $U$ is an $\hbar$-adically quasi compatible subset of $\mathcal{E}_{\hbar}(W)$. In view of \cite[Theorem 3.31]{JKLT2}, there exists a minimal $\hbar$-adically quasi compatible $\C[[\hbar]]$-submodule $\langle U\rangle_{\phi}$, which satisfies the conditions that
 \begin{itemize}
\item $\langle U\rangle_{\phi}$ is topologically free, $[\langle U\rangle_{\phi}]=\langle U\rangle_{\phi}$, and
\item $\langle U\rangle_{\phi}$ is $Y_{\mathcal{E}}^{\phi}$-closed, i.e., for $a(z),b(z)\in\langle U\rangle_{\phi}$ and $n\in \Z$, $a(z)_{n}^{\phi}b(z)\in\langle U\rangle_{\phi}$.
\end{itemize}
Furthermore, $(\langle U\rangle_{\phi},Y_{\mathcal{E}}^{\phi},1_{W})$ carries an $\hbar$-adic nonlocal vertex algebra structure such that $W$ is a faithful $\phi$-coordinated quasi $\langle U\rangle_{\phi}$-module with
\[Y_W^{\phi}(a(z),z_0)=a(z_0)\] for $a(z)\in\langle U\rangle_{\phi}$.

Consider the group homomorphism
\begin{align}\label{gphomo}
\mathcal{R}:G\to\mathrm{GL}(\mathcal{E}_{\hbar}(W)),\quad g\mapsto \left(\mathcal{R}_{g}: a(z)\mapsto a(\chi(g)^{-1}z)\right).
\end{align}
We have the following result, which
 is an $\hbar$-adic analog of  \cite[Theorem 3.11]{JKLT1}.

\begin{prpt}\label{consthm}
 Let  $U$ be an $\hbar$-adically $(G,\chi)$-quasi compatible subset of $\mathcal{E}_{\hbar}(W)$ such that $U$ is $G$-stable.  Then the $\hbar$-adic nonlocal vertex algebra $\langle U\rangle_{\phi}$, together with the group homomorphism $\mathcal{R}$  defined in \eqref{gphomo}, is an  $\hbar$-adic  nonlocal $G$-vertex algebra.
 Furthermore, $W$ is a faithful $(G,\chi)$-equivariant $\phi$-coordinated quasi $\langle U\rangle_{\phi}$-module with \[Y_W^{\phi}(a(z),z_0)=a(z_0)\quad \textrm{for }a(z)\in\langle U\rangle_{\phi}.\]
\end{prpt}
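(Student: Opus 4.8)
The nonlocal vertex algebra structure on $\langle U\rangle_{\phi}$, together with the fact that $W$ is a faithful $\phi$-coordinated quasi $\langle U\rangle_{\phi}$-module with $Y_W^{\phi}(a(z),z_0)=a(z_0)$, is already in hand from the construction recalled just above: an $\hbar$-adically $(G,\chi)$-quasi-compatible set is in particular $\hbar$-adically quasi-compatible, so \cite[Theorem 3.31]{JKLT2} applies and produces $\langle U\rangle_{\phi}$. Hence two things remain. First, $\mathcal{R}$ must be shown to take values in $\mathrm{Aut}(\langle U\rangle_{\phi})$, so that $(\langle U\rangle_{\phi},\mathcal{R})$ is an $\hbar$-adic nonlocal $G$-vertex algebra. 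Second, one must verify that $W$ satisfies the two extra axioms of a $(G,\chi)$-equivariant $\phi$-coordinated quasi module: the equivariance identity \eqref{eq:modequicon}, and $\C_{\chi(G)}[z_1,z_2]$-quasi-compatibility of pairs. The plan is to argue in each truncation $W/\hbar^nW$ and then pass to the inverse limit using the functorial maps $\pi_n$.

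For the first point, the key identity is that, for an $\hbar$-adically $(G,\chi)$-quasi-compatible pair $(a(z),b(z))$ in $\mathcal{E}_{\hbar}(W)$, every $g\in G$ and every $m\in\Z$,
\[\mathcal{R}_g\!\left(a(z)_m^{\phi}b(z)\right)=\left(\mathcal{R}_g a(z)\right)_m^{\phi}\left(\mathcal{R}_g b(z)\right).\]
I would prove this by unwinding the definition of $Y_{\mathcal{E}}^{\phi}$: if $f_n(z_1,z_2)\in\C_{\chi(G)}[z_1,z_2]$ makes $f_n(z_1,z)\pi_n(a(z_1))\pi_n(b(z))$ lie in $\mathcal{E}^{(2)}(W/\hbar^nW)$, then — every element of $\C_{\chi(G)}[z_1,z_2]$ being homogeneous and $\chi(G)\subseteq\C^{\times}$ being a subgroup — the polynomial $f_n(\chi(g)^{-1}z_1,\chi(g)^{-1}z_2)$ is a nonzero scalar multiple of $f_n$, hence again lies in $\C_{\chi(G)}[z_1,z_2]$ and serves the pair $(\mathcal{R}_g a,\mathcal{R}_g b)$; the substitution $z_1=\phi(z,z_0)$ then transports along the rescaling $z\mapsto\chi(g)^{-1}z$ because $\phi(\chi(g)^{-1}z,z_0)=\chi(g)^{-1}\phi(z,z_0)$, while the formal variable $z_0$ is left untouched. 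Granting this, $\mathcal{R}_g$ preserves $\hbar$-adic quasi-compatibility and is $Y_{\mathcal{E}}^{\phi}$-closed; since $U$ is $G$-stable, $\mathcal{R}_g\langle U\rangle_{\phi}$ is again topologically free, $[\cdot]$-stable, $Y_{\mathcal{E}}^{\phi}$-closed, $\hbar$-adically quasi-compatible and contains $U$, so by the minimality defining $\langle U\rangle_{\phi}$ one gets $\langle U\rangle_{\phi}\subseteq\mathcal{R}_g\langle U\rangle_{\phi}$, and applying this with $g^{-1}$ yields $\mathcal{R}_g\langle U\rangle_{\phi}=\langle U\rangle_{\phi}$. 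Then $\mathcal{R}_g$ fixes $1_W$ and intertwines $Y_{\mathcal{E}}^{\phi}$ by the displayed identity, hence is a nonlocal vertex algebra automorphism; $\mathcal{R}_{g_1g_2}=\mathcal{R}_{g_1}\mathcal{R}_{g_2}$ since $\chi$ is multiplicative; and reducing modulo $\hbar^n$ shows $(\langle U\rangle_{\phi}/\hbar^n\langle U\rangle_{\phi},\mathcal{R}^{(n)})$ is a nonlocal $G$-vertex algebra over $\C[[\hbar]]/\hbar^n\C[[\hbar]]$ for every $n$, i.e.\ $(\langle U\rangle_{\phi},\mathcal{R})$ is an $\hbar$-adic nonlocal $G$-vertex algebra.

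For the module axioms: from $Y_W^{\phi}(a(z),z_0)=a(z_0)$ we get $Y_W^{\phi}(\mathcal{R}_g a(z),z_0)=(\mathcal{R}_g a)(z_0)=a(\chi(g)^{-1}z_0)=Y_W^{\phi}(a(z),\chi(g)^{-1}z_0)$, which is exactly \eqref{eq:modequicon} after reduction mod $\hbar^n$. The remaining axiom needs $\langle U\rangle_{\phi}$ to be $\hbar$-adically $(G,\chi)$-quasi-compatible, not merely $(\C^{\times},1)$-quasi-compatible as delivered by the general construction; this is the one genuinely new ingredient, and I expect it to be the main obstacle. It should be proved by an inductive argument parallel to that of \cite[Theorem 3.11]{JKLT1}: one shows that the set of elements of $\mathcal{E}_{\hbar}(W)$ that are $\hbar$-adically $(G,\chi)$-quasi-compatible and are, moreover, mutually $(G,\chi)$-quasi-compatible with all of $U$ is closed under the operations $a(z)_m^{\phi}b(z)$, with the $G$-stability of $U$ guaranteeing that the compatibility polynomials produced stay supported on $\chi(G)$; this is carried out in each $W/\hbar^nW$ and then assembled. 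Once this is known, for any $a(z),b(z)\in\langle U\rangle_{\phi}$ and $n\in\Z_{+}$ there is $q(z_1,z_2)\in\C_{\chi(G)}[z_1,z_2]$ with $q(z_1,z_2)\pi_n(a(z_1))\pi_n(b(z_2))\in\mathcal{E}^{(2)}(W/\hbar^nW)$, i.e.\ $q(z_1,z_2)Y_W^{\phi}(a(z),z_1)Y_W^{\phi}(b(z),z_2)\in\mathrm{Hom}_{\C[[\hbar]]/\hbar^n\C[[\hbar]]}(W/\hbar^nW,W/\hbar^nW((z_1,z_2)))$. Hence each $W/\hbar^nW$ is a $(G,\chi)$-equivariant $\phi$-coordinated quasi $(\langle U\rangle_{\phi}/\hbar^n\langle U\rangle_{\phi})$-module, so $W$ is a $(G,\chi)$-equivariant $\phi$-coordinated quasi $\langle U\rangle_{\phi}$-module; faithfulness is already known, completing the plan.
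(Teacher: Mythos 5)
Your proposal is correct and follows essentially the same route as the paper: the paper's proof likewise asserts that the construction of $\langle U\rangle_{\phi}$ yields $\hbar$-adic $(G,\chi)$-quasi-compatibility, reduces modulo $\hbar^n$ to invoke the non-$\hbar$-adic result \cite[Theorem 3.11]{JKLT1} for each truncation, and assembles the $\hbar$-adic statement via \cite[Remark 3.22]{JKLT2}. The details you supply (the intertwining identity $\mathcal{R}_g(a(z)_m^{\phi}b(z))=(\mathcal{R}_g a(z))_m^{\phi}(\mathcal{R}_g b(z))$ via homogeneity of $\C_{\chi(G)}[z_1,z_2]$ and $\phi(\chi(g)^{-1}z,z_0)=\chi(g)^{-1}\phi(z,z_0)$, and the minimality argument for $\mathcal{R}_g\langle U\rangle_{\phi}=\langle U\rangle_{\phi}$) are exactly what the paper delegates to those references.
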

\begin{proof}
One concludes from the construction of $\langle U\rangle_{\phi}$ (see \cite[Theorem 3.31]{JKLT2}) that in this case  $\langle U\rangle_{\phi}$ is $\hbar$-adically $(G,\chi)$-quasi compatible as well. For every positive integer $n$, there is a canonical group homomorphism
 \begin{align*}
 \mathcal{R}^{(n)}:\ G\to \mathrm{GL}(\mathcal{E}^{(1)}(W/\hbar^n W)),\quad g\mapsto \mathcal{R}_g^{(n)}
 \end{align*}
 induced by $\mathcal{R}$ (see \eqref{gphomo}).
 Following \cite[Theorem 3.11]{JKLT1}, it is straightforward to see that $\langle U\rangle_{\phi}/\hbar^n\langle U\rangle_{\phi}$ together with the group homomorphism $\mathcal{R}^{(n)}$ forms a nonlocal $G$-vertex algebra (over $\C[[\hbar]]/\hbar^n\C[[\hbar]]$) and that $W/\hbar^n W$ is a $(G,\chi)$-equivariant $\phi$-coordinated quasi $\langle U\rangle_{\phi}/\hbar^n\langle U\rangle_{\phi}$-module with \[Y_{W/\hbar^nW}^{\phi}(a(z),z_0)=a(z_0)\quad
\textrm{for } a(z)\in\langle U\rangle_{\phi}/\hbar^n\langle U\rangle_{\phi} .\]
 Then the assertion follows from \cite[Remark 3.22]{JKLT2}.
\end{proof}
We also need the following result, which is proved in \cite[Proposition 3.33]{JKLT2}.
\begin{lemt}\label{le:Yephi}
Let $V$ be an $\hbar$-adically quasi compatible subset of $\mathcal{E}_{\hbar}(W)$ such that $V=\langle V\rangle_{\phi}$. Let $r,s\in\Z_{+}$, and let
\[a_i(z),b_i(z),\alpha_j(z),\beta_j(z), \gamma_k(z)\in V,~f_i(z),g_j(z)\in\C(z)[[\hbar]]\]
for $1\leq i\leq r$, $1\leq j\leq s$, $k\in\N$ such that $\sum_{k=0}^{\infty}\gamma_k(z)\in V$ and that
\begin{align*}
&\sum_{i=1}^r\iota_{z_1,z_2}\left(f_i(z_1/z_2)\right)a_i(z_1)b_i(z_2)-\sum_{j=1}^{s}\iota_{z_2,z_1}\left(g_j(z_1/z_2)\right)\alpha_j(z_2)\beta_j(z_1)\\
=\ &\sum_{k=0}^{\infty}\gamma_{k}(z)\frac{1}{k!}\left(z_2\frac{\partial}{\partial z_2}\right)^k\delta\left(\frac{z_2}{z_1}\right).
\end{align*}
Then we have that
\begin{align*}
&\sum_{i=1}^r\iota_{z_1,z_2}\left(f_i(e^{z_1-z_2})\right)Y_{\mathcal{E}}^{\phi}(a_i(z),z_1)Y_{\mathcal{E}}^{\phi}(b_i(z),z_2)\\
\ &-\sum_{j=1}^{s}\iota_{z_2,z_1}\left(g_j(e^{z_1-z_2})\right)Y_{\mathcal{E}}^{\phi}(\alpha_j(z),z_2)Y_{\mathcal{E}}^{\phi}(\beta_j(z),z_1)\\
=\ &\sum_{k=0}^{\infty}Y_{\mathcal{E}}^{\phi}(\gamma_k(z),z_2)\frac{1}{k!}\left(\frac{\partial}{\partial z_2}\right)^k\delta\left(\frac{z_2}{z_1}\right).
\end{align*}
\end{lemt}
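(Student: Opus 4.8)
The plan is to reduce everything to the already-established infinitesimal/formal-calculus identities and then pass to the $\phi$-coordinated setting term by term in the $\hbar$-adic filtration. First I would fix a positive integer $n$ and work modulo $\hbar^n$, so that $W/\hbar^n W$ is an honest $\C$-vector space, $V/\hbar^n V$ is a nonlocal vertex algebra over $\C[[\hbar]]/\hbar^n\C[[\hbar]]$, and all the series involved lie in the (non-completed) spaces $\mathcal{E}^{(k)}(W/\hbar^n W)$ of Definition-level objects. The hypothesis $V=\langle V\rangle_\phi$ guarantees that each $a_i(z),b_i(z),\alpha_j(z),\beta_j(z)$ and the sum $\sum_k\gamma_k(z)$ belong to an $\hbar$-adically quasi compatible, $Y_{\mathcal{E}}^\phi$-closed submodule, so the expressions $Y_{\mathcal{E}}^\phi(a_i(z),z_1)$, etc., are well defined elements of $\langle V\rangle_\phi[[z_1^{\pm1}]]$ and act on $W$. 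Since the $f_i,g_j$ are rational in one variable with coefficients in $\C[[\hbar]]$, their expansions $\iota_{z_1,z_2}f_i(z_1/z_2)$, $\iota_{z_2,z_1}g_j(z_1/z_2)$ and $\iota_{z_1,z_2}f_i(e^{z_1-z_2})$, $\iota_{z_2,z_1}g_j(e^{z_1-z_2})$ are unambiguous once one clears a common denominator lying in $\C_{\chi(G)}[z_1,z_2]$ (for the $z_1/z_2$ versions) respectively a product of factors $z_1-z_2-2\pi i k$ — but only the $z_1/z_2$ denominator matters here because the $e^{z_1-z_2}$ expansions are what $\phi$-coordination naturally produces.

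Next I would invoke \cite[Proposition 3.33]{JKLT2}, i.e.\ precisely the statement of Lemma \ref{le:Yephi} itself at the level of the nonlocal vertex algebra $\langle V\rangle_\phi/\hbar^n\langle V\rangle_\phi$ over the discrete ring $\C[[\hbar]]/\hbar^n\C[[\hbar]]$. Concretely: the given relation among $a_i(z),b_i(z),\alpha_j(z),\beta_j(z),\gamma_k(z)$ in $\mathcal{E}^{(2)}(W/\hbar^n W)$ — which is a relation of the "$S$-locality with delta-function correction" type — is exactly the input hypothesis of that proposition, and its conclusion is the desired identity among the $Y_{\mathcal{E}}^\phi$'s in $\mathcal{E}_{\hbar}(W)/\hbar^n$. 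The key computational mechanism there is the substitution rule $z_1\mapsto\phi(z_2,z_0)=z_2 e^{z_0}$ together with the change-of-variables identity
\[
\frac{1}{k!}\Bigl(z_2\frac{\partial}{\partial z_2}\Bigr)^k\delta\!\left(\frac{z_2}{z_1}\right)\Big|_{z_1=z_2 e^{z_0}}
\;\longleftrightarrow\;
\frac{1}{k!}\Bigl(\frac{\partial}{\partial z_2}\Bigr)^k\delta\!\left(\frac{z_2}{z_1}\right),
\]
which turns $z_i/z_j$-expansions into $e^{z_i-z_j}$-expansions, converts the operator $z_2\partial_{z_2}$ into $\partial_{z_2}$, and matches the delta-correction terms on the nose. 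Applying this for all $n$ simultaneously, and using that $\langle V\rangle_\phi$ and $\mathcal{E}_{\hbar}(W)$ are topologically free so that a family of compatible identities mod $\hbar^n$ assembles to an identity over $\C[[\hbar]]$, yields the asserted equality.

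The one point that needs care — and which I expect to be the main (though modest) obstacle — is the bookkeeping of expansion conventions and of the quasi-compatibility denominators: one must check that the single common polynomial factor from $\C_{\chi(G)}[z_1,z_2]$ making $\prod f(z_i,z_j)\,\pi_n(a_i(z_1))\pi_n(b_i(z_2))$ lie in $\mathcal{E}^{(2)}(W/\hbar^n W)$ is compatible with the rational factors $f_i,g_j$ appearing on the two sides, so that the substitution $z_1=z_2e^{z_0}$ is legal for the whole expression and not just for individual summands. This is handled exactly as in the proof of \cite[Proposition 3.33]{JKLT2}: clear denominators first, perform the substitution on the resulting polynomial-coefficient expression where it is manifestly valid, then divide back by $f_n(\phi(z,z_0),z)$ in the definition of $Y_{\mathcal{E}}^\phi$; the surviving terms reorganize into the stated form because $\sum_k\gamma_k(z)\in V$ guarantees the correction sum converges $\hbar$-adically. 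Once this is in place, taking the inverse limit over $n$ finishes the argument.
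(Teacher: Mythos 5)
Your proposal takes essentially the same route as the paper: the paper gives no independent argument for this lemma and simply quotes it as \cite[Proposition 3.33]{JKLT2}, which is exactly the result you invoke (together with a reasonable sketch of its mechanism — reduction mod $\hbar^n$, the substitution $z_1=\phi(z_2,z_0)=z_2e^{z_0}$ converting $z_i/z_j$-expansions into $e^{z_i-z_j}$-expansions and $z_2\tfrac{\partial}{\partial z_2}$ into $\tfrac{\partial}{\partial z_2}$, and passage to the inverse limit). Since both you and the authors ultimately defer the proof to the cited proposition, there is nothing further to compare.
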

\subsection{$(\Z,\chi_{p^N})$-equivariant $\phi$-coordinated quasi $V_{\widehat{\mathfrak{sl}}_{\infty},\hbar}(\ell,0)$-modules}
In this subsection, we study the category of $(\Z,\chi_{p^N})$-equivariant $\phi$-coordinated  quasi $V_{\widehat{\mathfrak{sl}}_{\infty},\hbar}(\ell,0)$-modules.

 We first introduce a category $\mathcal{M}_{\ell}^{\phi}$ as follows: an object in $\mathcal{M}_{\ell}^{\phi}$ is a pair \[(W,(\psi_i(z),y_{i}^{\pm}(z))_{0\leq i\leq N-1}),\] where $W$ is a  topologically free $\C[[\hbar]]$-module and $\psi_{i}(z)$, $y_{i}^{\pm}(z)\in\mathcal{E}_{\hbar}(W)$ for $0\leq i\leq N-1$ such that  the following conditions hold ($0\leq i,j\leq N-1$):
\begin{eqnarray}
\notag &&[\psi_{i}(z_1),\psi_{j}(z_2)]\\
\label{eq:psi}&=&[b_{i,j}]_{q^{z_2\frac{\partial}{\partial z_2}}}[\ell]_{q^{z_2\frac{\partial}{\partial z_2}}}\left(\iota_{z_1,z_2}q^{-\ell z_2\frac{\partial}{\partial z_2}}-\iota_{z_2,z_1}q^{\ell z_2\frac{\partial}{\partial z_2}}\right)\frac{p^{-m_{i,j}}z_1z_2}{\left(z_1-p^{-m_{i,j}}z_2\right)^2},\\
\notag &&[\psi_{i}(z_1),y_{j}^{\pm}(z_2)]\\
\label{eq:psiy}&=&\pm y_{j}^{\pm}(z_2)[b_{i,j}]_{q^{z_2\frac{\partial}{\partial z_2}}}\left(\iota_{z_1,z_2}q^{-\ell z_2\frac{\partial}{\partial z_2}}-\iota_{z_2,z_1}q^{\ell z_2\frac{\partial}{\partial z_2}}\right)\frac{z_1+p^{-m_{i,j}}z_2}{2z_1-2p^{-m_{i,j}}z_2},\\
\notag&&(z_1-z_2)^{\delta_{i,j}}(z_1-q^{-2\ell}z_2)^{\delta_{i,j}}\big(y_{i}^{+}(z_1)y_{j}^{-}(z_2)\\
\label{eq:y+-} &-&\iota_{z_2,z_1}\left(g_{ij}(z_2,p^{m_{i,j}}z_1)\right)y_{j}^{-}(z_2)y_{i}^{+}(z_1))\big)=0,\\
\notag &&p^{-m_{i,j}}\iota_{z_1,z_2}\left(f_{ij}^{+}(p^{m_{i,j}}z_1,z_2)\right)y_{i}^{\pm}(z_1)y_{j}^{\pm}(z_2)\\
\label{eq:y++}&=&C_{i,j}\,\iota_{z_2,z_1}\left(f_{ji}^{+}(p^{m_{j,i}}z_2,z_1)\right)y_{j}^{\pm}(z_2)y_{i}^{\pm}(z_1),
\end{eqnarray}
where $f_{ij}^{+}(z_1,z_2)$ is as in \eqref{eq:fpm}, $g_{ij}(z_2,z_1)$ is as in \eqref{gij}, and $C_{i,j}$ is as in \eqref{cij}.
A morphism \[f\colon (W,(\psi_i(z),y_{i}^{\pm}(z))_{0\leq i\leq N-1})\to (W',(\psi_i'(z),(y_{i}^{\pm})'(z))_{0\leq i\leq N-1})\] of objects in $\mathcal{M}_\ell^\phi$ is a $\C[[\hbar]]$-module map
\[f:\ W\to W',\]
such that
\[f\circ \psi_i(z)=\psi_i'(z)\circ f\quad\textrm{and}\quad f\circ y_{i}^{\pm}(z)=(y_{i}^{\pm})'(z)\circ f\quad \textrm{for }0\leq i\leq N-1.\]

For every $a\in \C^\times$, we define a character on $\Z$ as follows:
\[
\chi_a: \Z\rightarrow \C^\times,\quad n\mapsto a^n.
\]
For every $(W,(\psi_{i}(z),y_{i}^{\pm}(z))_{0\leq i\leq N-1})\in \mathcal{M}_{\ell}^{\phi}$, form the following subset of $\mathcal{E}_{\hbar}(W)$: \[U_{W}=\{\psi_{i}(z),y_{i}^{\pm}(z)\mid 0\leq i\leq N-1\}.\]  From \eqref{eq:psi}-\eqref{eq:y++},  it follows that $U_W$ is an $\hbar$-adically $(\Z,\chi_p)$-quasi compatible subset of $\mathcal{E}_{\hbar}(W)$ (cf. \cite[Lemma 5.2]{Li5}).
By Proposition \ref{consthm},  we obtain an $\hbar$-adic nonlocal $\Z$-vertex algebra
\[(\langle U_W\rangle_{\phi},\mathcal{R})\] such that $W$ is a faithful $(\Z,\chi_p)$-equivariant $\phi$-coordinated quasi $\langle U_W\rangle_{\phi}$-module, where $\mathcal{R}$ is as in \eqref{gphomo}.

By taking the composition of  the homomorphism
\[\Z\rightarrow \Z,\quad n\mapsto nN
\] and the homomorphism $\mathcal{R}$, we get a group homomorphism
\begin{align*}
\mathcal{R}_N:\ \Z\to \mathrm{Aut}(\langle U_W\rangle_{\phi}),\quad n\mapsto \mathcal{R}_{nN}.
\end{align*}
Then
\[(\langle U_W\rangle_{\phi},\mathcal{R}_N)\] is also an $\hbar$-adic nonlocal $\Z$-vertex algebra  such that $W$ is a faithful $(\Z,\chi_{p^N})$-equivariant $\phi$-coordinated quasi $\langle U_W\rangle_{\phi}$-module.

Let $\mathcal{R}_{\ell}^{\phi}$ denote the full subcategory of $\mathcal{M}_{\ell}^{\phi}$ consisting of those objects \[(W,(\psi_{i}(z),y_{i}^{\pm}(z))_{0\leq i\leq N-1})\]  which satisfy the  conditions $(0\leq i,j\leq N-1)$:
\begin{align}
\label{eq:y+-'}&y_{i}^{+}(z_1)y_{j}^{-}(z_2)-\iota_{z_2,z_1}\left(g_{ij}(z_2,p^{m_{i,j}}z_1)\right)y_{j}^{-}(z_2)y_{i}^{+}(z_1)\\
=\ &\frac{\delta_{i,j}}{q-q^{-1}}\left(\delta\left(\frac{z_2}{z_1}\right)-E(\psi_{i}(z))\delta\left(\frac{q^{-2\ell}z_2}{z_1}\right)\right),\nonumber\\
\label{eq:serrey}&\left(\left(y_{i}^{\pm}(z)\right)_{0}^{\phi}\right)^{1-b_{i,j}} y_{j}^{\pm}(p^{m_{i,j}}z)=0\quad\textrm{if }b_{i,j}\le 0.
\end{align}
 Here, $g_{ij}(z_2,p^{m_{i,j}}z_1)$ is as in \eqref{gij} and
\begin{equation}\label{eq:defE}
\begin{split}
E(\psi_i(z))\ &=\ \left( \frac{F(1+\ell)}{F(1-\ell)}\right)^{\frac{1}{2}}\textrm{exp}\left(\left(-q^{-\ell\partial}F(\partial)\psi_i(z)\right)_{-1}\right){\bf 1}\in \langle U_W\rangle_{\phi}.
\end{split}
\end{equation}

For later use, we need the following technical result.

\begin{lemt}
For every $i,j\in\Z$, the following equalities hold in $V_{\widehat{\mathfrak{sl}}_{\infty},\hbar}(\ell,0)[[z,z^{-1}]]:$
\begin{align}
&\label{eq:singh}\mathrm{Sing}_{z}Y_{\tau}(h_{i,\hbar},z)h_{j,\hbar}=\mathrm{Sing}_{z_1}\tau_{ij}(z){\bf 1}+a_{ij}\ell{\bf 1}z^{-2},\\
&\label{eq:singx}\mathrm{Sing}_{z}Y_{\tau}(h_{i,\hbar},z)x_{j,\hbar}^{\pm}=\pm a_{i,j}x_{j,\hbar}^{\pm}z^{-1}\pm\mathrm{Sing}_{z}\tau_{ij}^{1,\pm}(z)x_{j,\hbar}^{\pm} ,\\
&\label{eq:singxpm1}\mathrm{Sing}_{z}Y_{\tau}(x_{i,\hbar}^{\pm},z)x_{j,\hbar}^{\pm}=0,\quad\textrm{if }a_{i,j}=0,\\
&\label{eq:singxpm2}\mathrm{Sing}_{z}Y_{\tau}(x_{i,\hbar}^{\pm},z)x_{j,\hbar}^{\pm}=\sum_{k\geq 0}(-\hbar)^k(x_{i,
\hbar}^{\pm})_{0}x_{j,\hbar}^{\pm}z^{-k-1}\quad\textrm{if }a_{i,j}<0,\\
&\label{eq:singxpm3}\mathrm{Sing}_{z}Y_{\tau}(x_{i,\hbar}^{\pm},z)x_{i,\hbar}^{\pm}=\sum_{k\geq 0}(2\hbar)^{k+1}(x_{i,
\hbar}^{\pm})_{-1}x_{i,\hbar}^{\pm}z^{-k-1},\\
&\label{eq:singxmp}\mathrm{Sing}_{z}Y_{\tau}(x_{i,\hbar}^{+},z)x_{i,\hbar}^{-}=(q-q^{-1})^{-1}\left({\bf 1}z^{-1}-E(h_{i,\hbar})(z+2\ell\hbar)^{-1}\right).
\end{align}
\end{lemt}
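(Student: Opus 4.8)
The plan is to derive each of the six singular-part identities \eqref{eq:singh}--\eqref{eq:singxmp} by combining three ingredients: (i) the defining relations \eqref{eq:hh}--\eqref{eq:xxmp} of the category $\mathcal{M}_{\tau}$, which hold in particular for the distinguished object $(F_{\tau}(A,\ell),(Y_{\tau}(h_{i,\hbar},z),Y_{\tau}(x_{i,\hbar}^{\pm},z))_{i\in\Z})$; (ii) the general principle, valid in any $\hbar$-adic nonlocal vertex algebra, that $\mathrm{Sing}_{z}Y(u,z)v$ is determined by the commutator (or $S$-twisted commutator) $[Y(u,z_1),Y(v,z_2)]$ via taking the coefficient of the delta function and its derivatives after setting $z_1\to z_2$ and relabelling; and (iii) the images of the generators of the ideal $R_{\ell}$ from \eqref{eq:Rell1}--\eqref{eq:Rell3}, which vanish in $V_{\widehat{\mathfrak{sl}}_{\infty},\hbar}(\ell,0)$. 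First I would apply the vacuum axiom $Y(v,z)\mathbf{1}\in v+zV[[z]]$ to rewrite each $\mathrm{Sing}_{z}Y_{\tau}(a,z)b$ in terms of the modes $a_{n}b$ for $n\geq 0$, i.e.\ as $\sum_{n\geq 0}(a_{n}b)z^{-n-1}$, so that the task becomes computing these nonnegative modes.

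For \eqref{eq:singh} and \eqref{eq:singx}: apply the commutator formula to \eqref{eq:hh}, respectively \eqref{eq:hx}, pair against $\mathbf{1}$, and read off the singular part in $z=z_1-z_2$ specialized at $z_2=0$; the $a_{i,j}\ell\,\tfrac{\partial}{\partial z_2}\delta(z_2/z_1)$ (resp.\ $a_{i,j}z_1^{-1}\delta(z_2/z_1)$) term produces the $a_{i,j}\ell z^{-2}\mathbf{1}$ (resp.\ $\pm a_{i,j}x_{j,\hbar}^{\pm}z^{-1}$) contribution, while the $\iota_{z_1,z_2}\tau_{ij}(z_1-z_2)-\iota_{z_2,z_1}\tau_{ji}(z_2-z_1)$ part contributes $\mathrm{Sing}_{z}\tau_{ij}(z)\mathbf{1}$ after noting that the $\iota_{z_2,z_1}$-expanded piece is regular at $z=0$ once paired with $\mathbf{1}$. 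For \eqref{eq:singxpm1} and \eqref{eq:singxpm3}: use \eqref{eq:xxpm} with $j$ such that $a_{i,j}=0$ (so $\delta_{i,j}=0$), respectively with $j=i$; in the first case $\tau_{ij}^{\pm,\pm}(z)$ is a unit and the relation forces locality of order zero, hence $\mathrm{Sing}_{z}=0$; in the second case, $\tau_{ii}^{\pm,\pm}(z)=(2\hbar)^{-1}F(z)\cdot(\text{unit})$ up to the explicit normalization in \eqref{tijpm}, and expanding $z^{\delta_{i,i}-1}=1$ together with the geometric series identity $F(z)^{-1}$-type manipulation yields the stated series $\sum_{k\geq 0}(2\hbar)^{k+1}(x_{i,\hbar}^{\pm})_{-1}x_{i,\hbar}^{\pm}z^{-k-1}$. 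For \eqref{eq:singxpm2}: when $a_{i,j}<0$ we have $b_{\underline{i},\underline{j}}<0$ and $\tau_{ij}^{\pm,\pm}(z)$ specializes, after using the relevant entries of $M$ and $B$, to a rational function whose expansion in $e^{-z}$ is $1/(1-q^{b_{\underline{i},\underline{j}}}\cdots e^{-z})$-type; combining the $\mathcal{M}_{\tau}$-relation \eqref{eq:xxpm} with $\delta_{i,j}=0$ with the vacuum property gives $\mathrm{Sing}_{z}Y_{\tau}(x_{i,\hbar}^{\pm},z)x_{j,\hbar}^{\pm}=\sum_{k\geq 0}(-\hbar)^{k}(x_{i,\hbar}^{\pm})_{0}x_{j,\hbar}^{\pm}z^{-k-1}$, where the factor $(-\hbar)^{k}$ comes from expanding the exponential $e^{-z}$-dependence of $\tau$ and matching with $q^{b}=e^{b\hbar}$; here one also uses that higher modes $(x_{i,\hbar}^{\pm})_{n}x_{j,\hbar}^{\pm}$ for $n\geq 1$ vanish, which follows from the order of the pole being exactly one in the $\mathcal{M}_\tau$-relation.

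Finally, for \eqref{eq:singxmp}: apply the $S$-twisted commutator relation \eqref{eq:xxmp} with $j=i$, $\delta_{i,i}=1$, to the pair $(x_{i,\hbar}^{+},x_{i,\hbar}^{-})$; here $\tau_{ii}^{+,-}(z)=z^{-1}(z+2\ell\hbar)$ and $\tau_{ii}^{-,+}(z)=z^{-1}(z-2\ell\hbar)\cdot(\text{unit})$, so the relation reads $(z_1-z_2)^{2}$ times a twisted commutator vanishes after multiplying by the appropriate $\tau$-factors; taking the singular part and using the images \eqref{eq:Rell1} and \eqref{eq:Rell2} of the generators of $R_{\ell}$ — which say precisely that $(x_{i,\hbar}^{+})_{0}x_{i,\hbar}^{-}=(q-q^{-1})^{-1}(\mathbf{1}-E(h_{i,\hbar}))$ and $(x_{i,\hbar}^{+})_{1}x_{i,\hbar}^{-}=-2\ell\hbar(q-q^{-1})^{-1}E(h_{i,\hbar})$ in the quotient — one assembles $\mathrm{Sing}_{z}Y_{\tau}(x_{i,\hbar}^{+},z)x_{i,\hbar}^{-}=(q-q^{-1})^{-1}\bigl(\mathbf{1}z^{-1}-E(h_{i,\hbar})(z+2\ell\hbar)^{-1}\bigr)$, after expanding $(z+2\ell\hbar)^{-1}=z^{-1}-2\ell\hbar z^{-2}+\cdots$ and checking the two lowest modes agree, together with the vanishing of modes $\geq 2$ (again from the pole order in \eqref{eq:xxmp}). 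The main obstacle I anticipate is the bookkeeping in \eqref{eq:singxpm2} and \eqref{eq:singxmp}: one must carefully track how the operator exponentials $q^{c\frac{\partial}{\partial z}}=e^{c\hbar\frac{\partial}{\partial z}}$ and the $e^{\pm z}$-dependence in the $\tau$-functions interact under the substitution $z_1=\phi(z_2,z)$-style specialization and the passage to the quotient by $R_{\ell}$, and in particular verifying that the contributions of the ideal generators \eqref{eq:Rell1}--\eqref{eq:Rell3} account for \emph{all} of the singular part (i.e.\ that no extra singular terms survive). This is essentially a matching of two explicit $\hbar$-adic power series, and the argument parallels \cite[Proposition 6.16, Lemma 6.17, Lemma 6.18]{K} closely enough that it can be transcribed with the substitutions dictated by the parameter $p$.
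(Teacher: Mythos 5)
Your overall strategy---reading off $\mathrm{Sing}_{z}Y_{\tau}(u,z)v$ from the exact relations of $\mathcal{M}_{\tau}$ satisfied by $F_{\tau}(A,\ell)$, together with the images of the generators \eqref{eq:Rell1}--\eqref{eq:Rell3} of $R_{\ell}$---is the right one, and it is essentially what the paper does (the paper simply outsources the hard cases to \cite[Lemma 6.9, Proposition 6.10]{K}). Your handling of \eqref{eq:singh}, \eqref{eq:singx} and \eqref{eq:singxpm1} is correct in substance (for \eqref{eq:singxpm1} note that $\tau_{ij}^{\pm,\pm}(z)$ is not itself a unit but $z^{-1}$ times a unit of $\C[[z,\hbar]]$; the $z^{-1}$ is cancelled by the factor $(z_1-z_2)^{1-\delta_{i,j}}$, so your conclusion that the $S$-locality has order zero still stands).

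There is, however, a genuine error in your mechanism for \eqref{eq:singxpm2}, \eqref{eq:singxpm3} and \eqref{eq:singxmp}. You assert that ``higher modes $(x_{i,\hbar}^{\pm})_{n}x_{j,\hbar}^{\pm}$ for $n\geq 1$ vanish'' and invoke ``the vanishing of modes $\geq 2$'' for the $(+,-)$ case; both claims are false and contradict the very formulas you are proving: \eqref{eq:singxpm2} says $(x_{i,\hbar}^{\pm})_{k}x_{j,\hbar}^{\pm}=(-\hbar)^{k}(x_{i,\hbar}^{\pm})_{0}x_{j,\hbar}^{\pm}$, which lies in $\hbar^{k}V$ but does not vanish, and $E(h_{i,\hbar})(z+2\ell\hbar)^{-1}$ has infinitely many nonzero modes. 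Likewise your description of $\tau_{ii}^{\pm,\pm}(z)$ as ``$(2\hbar)^{-1}F(z)\cdot(\text{unit})$'' is wrong, since that expression is regular at $z=0$ while $\tau_{ii}^{\pm,\pm}(z)$ has a simple pole there. The point you are missing is that the relevant zeros of the $\tau$-factors are $\hbar$-shifted: $\tau_{ij}^{\pm,\pm}(z)=z^{-1}(z+\hbar)\cdot(\text{unit})$ when $a_{i,j}<0$, $\tau_{ii}^{\pm,\pm}(z)=z^{-1}(z-2\hbar)\cdot(\text{unit})$, and $\tau_{ii}^{+,-}(z)=z^{-1}(z+2\ell\hbar)$. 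Hence \eqref{eq:xxpm} and \eqref{eq:xxmp} do not kill higher modes; they give $\mathrm{Sing}_{z}(z+\hbar)Y_{\tau}(x_{i,\hbar}^{\pm},z)x_{j,\hbar}^{\pm}=0$, $\mathrm{Sing}_{z}z^{-1}(z-2\hbar)Y_{\tau}(x_{i,\hbar}^{\pm},z)x_{i,\hbar}^{\pm}=0$ and $\mathrm{Sing}_{z}z(z+2\ell\hbar)Y_{\tau}(x_{i,\hbar}^{+},z)x_{i,\hbar}^{-}=0$ (these are exactly what the paper imports from \cite[Lemma 6.9, Proposition 6.10]{K}), which translate into the recursions $(x_{i,\hbar}^{\pm})_{k+1}x_{j,\hbar}^{\pm}=-\hbar(x_{i,\hbar}^{\pm})_{k}x_{j,\hbar}^{\pm}$, $(x_{i,\hbar}^{\pm})_{k}x_{i,\hbar}^{\pm}=2\hbar(x_{i,\hbar}^{\pm})_{k-1}x_{i,\hbar}^{\pm}$ for $k\geq 0$, and $(x_{i,\hbar}^{+})_{k+1}x_{i,\hbar}^{-}=-2\ell\hbar(x_{i,\hbar}^{+})_{k}x_{i,\hbar}^{-}$ for $k\geq 1$. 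Iterating these (and seeding the last with \eqref{eq:Rell1}--\eqref{eq:Rell2}) is what produces the geometric series in the statement; without this step your argument does not yield \eqref{eq:singxpm2}, \eqref{eq:singxpm3} or \eqref{eq:singxmp}.
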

\begin{proof}
The  first three equalities are
implied by \eqref{eq:hh},
\eqref{eq:hx} and \eqref{eq:Rell3} respectively. The last equality follows from \cite[Proposition 6.10]{K}. For the  equalities \eqref{eq:singxpm2} and \eqref{eq:singxpm3}, it follows from \cite[Lemma 6.9]{K} that ($i,j\in \Z$)
\begin{align*}
&\mathrm{Sing}_{z}(z+\hbar)Y_{\tau}(x_{i,\hbar}^{\pm},z)x_{j,\hbar}^{\pm}=0\quad\textrm{if }a_{i,j}<0,\\
&\mathrm{Sing}_{z}z^{-1}(z-2\hbar)Y_{\tau}(x_{i,\hbar}^{\pm},z)x_{i,\hbar}^{\pm}=0.
\end{align*}
This implies that
\begin{align*}
(x_{i,\hbar}^{\pm})_{k+1}x_{j,\hbar}^{\pm}=-\hbar(x_{i,\hbar}^{\pm})_{k}x_{j,\hbar}^{\pm}\quad \textrm{and}\quad (x_{i,\hbar}^{\pm})_{k}x_{i,\hbar}^{\pm}=2\hbar(x_{i,\hbar}^{\pm})_{k-1}x_{i,\hbar}^{\pm}
\end{align*}
for $k\geq 0$, $i,j\in\Z$ with  $a_{i,j}<0$, as desired.
\end{proof}


\begin{prpt}\label{pr:modhh}
Let $(W,Y_W^{\phi})$ be a $(\Z,\chi_{p^N})$-equivariant $\phi$-coordinated  quasi $V_{\widehat{\mathfrak{sl}}_{\infty},\hbar}(\ell,0)$-module and let $i,j\in\Z$. Then we have that
\begin{align}
\notag&\left[Y_{W}^{\phi}(h_{i,\hbar},p^iz_1),Y_{W}^{\phi}(h_{j,\hbar},p^jz_2)\right]\\
\label{eq:Ywphihh}=\ &[b_{\underline{i},\underline{j}}]_{q^{z_2\frac{\partial}{\partial z_2}}}[\ell]_{q^{z_2\frac{\partial}{\partial z_2}}}\left(\iota_{z_1,z_2}q^{-\ell z_2\frac{\partial}{\partial z_2}}-\iota_{z_2,z_1}q^{\ell z_2\frac{\partial}{\partial z_2}}\right)\frac{p^{m_{\underline{j},\underline{i}}}z_1z_2}{\left(z_1-p^{m_{\underline{j},\underline{i}}}z_2\right)^2}.\\
&\left[Y_{W}^{\phi}(h_{i,\hbar},p^iz_1),Y_{W}^{\phi}(x_{j,\hbar}^{\pm},p^jz_2)\right]\nonumber\\
\label{eq:Ywphihx}=\ &Y_{W}^{\phi}(x_{j,\hbar}^{\pm},p^jz_2)[b_{\underline{i},\underline{j}}]_{q^{z_2\frac{\partial}{\partial z_2}}}\left(\iota_{z_1,z_2}q^{-\ell z_2\frac{\partial}{\partial z_2}}-\iota_{z_2,z_1}q^{\ell z_2\frac{\partial}{\partial z_2}}\right)\frac{z_1+p^{m_{\underline{j},\underline{i}}}z_2}{2z_1-2p^{m_{\underline{j},\underline{i}}}z_2}.
\end{align}
\end{prpt}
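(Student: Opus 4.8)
The idea is to transfer the defining relations of $\mathcal{M}_\tau$ from $V:=V_{\widehat{\mathfrak{sl}}_{\infty},\hbar}(\ell,0)$ to the module $W$ by means of the equivariant transfer lemma, Lemma~\ref{le:equimod}. Since $V$ is a quotient of $F_\tau(A,\ell)$, the pair $(V,(Y_\tau(h_{i,\hbar},z),Y_\tau(x_{i,\hbar}^{\pm},z))_{i\in\Z})$ is an object of $\mathcal{M}_\tau$, its $S$-operator is described by \eqref{eq:stauhh}--\eqref{eq:stauxx}, and the singular parts of the relevant products are \eqref{eq:singh} and \eqref{eq:singx}. First I would record the two instances of $S$-locality in $V$ that are needed: combining the $S$-locality of $V$ with \eqref{eq:stauhh} gives
\begin{align*}
Y_\tau(h_{i,\hbar},z_1)Y_\tau(h_{j,\hbar},z_2)\ \sim\ &Y_\tau(h_{j,\hbar},z_2)Y_\tau(h_{i,\hbar},z_1)\\
&+\iota_{z_2,z_1}\bigl(\tau_{ij}(-(z_2-z_1))-\tau_{ji}(z_2-z_1)\bigr),
\end{align*}
and combining it with \eqref{eq:stauxh} gives the analogous relation for $Y_\tau(h_{i,\hbar},z_1)Y_\tau(x_{j,\hbar}^{\pm},z_2)$ with the correction $\iota_{z_2,z_1}\bigl(\tau_{ij}^{1,\pm}(-(z_2-z_1))+\tau_{ji}^{2,\pm}(z_2-z_1)\bigr)$ multiplying $Y_\tau(x_{j,\hbar}^{\pm},z_2)$. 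By \eqref{tij-tji} and \eqref{tij+tji} these correction terms are $\C[[\hbar]]$-linear combinations of functions of the shape $\tfrac{\alpha e^{z_2-z_1}}{(1-\alpha e^{z_2-z_1})^2}$ and $\tfrac{1+\alpha e^{z_2-z_1}}{2-2\alpha e^{z_2-z_1}}$ with $\alpha\in\C[[\hbar]]^{\times}$, hence are of the form $f(e^{z_2-z_1})$ with $f(z)\in\C(z)[[\hbar]]$ as required by Lemma~\ref{le:equimod}.

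Next I would verify the $\hbar$-adic finiteness hypothesis of that lemma. Here $\Z$ acts by $\rho_{m,\hbar}(h_{i,\hbar})=h_{i+mN,\hbar}$, so one must see that for each $n\in\Z_+$ the coefficients $(h_{i+mN,\hbar})_k h_{j,\hbar}$, resp.\ $(h_{i+mN,\hbar})_k x_{j,\hbar}^{\pm}$, lie in $\hbar^nV$ for all but finitely many pairs $(m,k)$. By \eqref{eq:singh}, resp.\ \eqref{eq:singx}, these are the coefficients of the singular part at $z=0$ of $\tau_{i+mN,j}(z)+a_{i+mN,j}\ell z^{-2}$, resp.\ of $\pm a_{i+mN,j}x_{j,\hbar}^{\pm}z^{-1}\pm\tau_{i+mN,j}^{1,\pm}(z)x_{j,\hbar}^{\pm}$; since $\underline{i+mN}=\underline i$ and, $p$ not being a root of unity, $p^{\,j-i-mN+m_{\underline i,\underline j}}\ne 1$ for all but at most one $m$, while $a_{i+mN,j}=0$ for all but finitely many $m$, the relevant rational functions are regular at $z=0$ for all but finitely many $m$ and their contributions vanish, and for each fixed $m$ only finitely many $k$ survive modulo $\hbar^n$. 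With both hypotheses in place, Lemma~\ref{le:equimod} applied to $(h_{i,\hbar},h_{j,\hbar})$ and to $(h_{i,\hbar},x_{j,\hbar}^{\pm})$ yields commutator identities on $W$ in which the rational correction becomes $\iota_{z_2,z_1}$ of the same function with $e^{z_2-z_1}$ replaced by $z_2/z_1$, plus the term $\sum_{m\in\Z}\sum_{k\ge 0}Y_W^{\phi}\bigl((h_{i+mN,\hbar})_k h_{j,\hbar},z_2\bigr)\tfrac1{k!}\bigl(z_2\tfrac{\partial}{\partial z_2}\bigr)^k\delta\bigl(p^{-mN}z_2/z_1\bigr)$, the factors $Y_W^{\phi}(\cdots,z_2)$ being scalar multiples of $1_W$, resp.\ of $Y_W^{\phi}(x_{j,\hbar}^{\pm},z_2)$, by \eqref{eq:singh}, resp.\ \eqref{eq:singx}.

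Finally I would substitute $z_1\mapsto p^iz_1$ and $z_2\mapsto p^jz_2$: the left sides become the left sides of \eqref{eq:Ywphihh} and \eqref{eq:Ywphihx}, the delta functions become $\bigl(z_2\tfrac{\partial}{\partial z_2}\bigr)^k\delta\bigl(p^{\,j-i-mN}z_2/z_1\bigr)$, and the argument of the rational correction becomes $p^{\,j-i}z_2/z_1$, so that all $i,j$-dependence collapses to dependence on $\underline i,\underline j$ and on $p^{m_{\underline j,\underline i}}$. It then remains to identify the resulting expression with the closed forms on the right of \eqref{eq:Ywphihh} and \eqref{eq:Ywphihx}; this last step is, I expect, the main obstacle. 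It is carried out using \eqref{tij-tji}, resp.\ \eqref{tij+tji}, for the explicit rational correction, the standard identity expressing $(\iota_{z_1,z_2}-\iota_{z_2,z_1})$ of a rational function with a pole at $z_1=\alpha z_2$ through the $\bigl(z_2\tfrac{\partial}{\partial z_2}\bigr)^k\delta(\alpha z_2/z_1)$, a reindexing of the sum over $m$, and the translation of the additive shift operators acting in the variable $z$ with $e^{z}=z_1/z_2$ into the multiplicative operators $q^{\mp\ell z_2\partial/\partial z_2}$, together with the passage $[b_{\underline i,\underline j}]_{q^{\partial/\partial z}}\mapsto[b_{\underline i,\underline j}]_{q^{z_2\partial/\partial z_2}}$ and similarly for $[\ell]$; this exhibits the right-hand side as the single difference of expansions displayed in \eqref{eq:Ywphihh}, resp.\ \eqref{eq:Ywphihx}.
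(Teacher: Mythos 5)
Your proposal is correct and follows essentially the same route as the paper: derive the $S$-locality relation in $V_{\widehat{\mathfrak{sl}}_{\infty},\hbar}(\ell,0)$ from the $\mathcal{M}_{\tau}$-relations, apply Lemma~\ref{le:equimod}, observe that the sum over the group collapses to a single term because $p$ is not a root of unity, and convert the surviving residue/delta term into the $\iota_{z_1,z_2}-\iota_{z_2,z_1}$ difference via the identity \eqref{eq:resfz}. You are somewhat more explicit than the paper in checking the finiteness hypothesis of Lemma~\ref{le:equimod} and somewhat less explicit in the final residue computation, but all the ingredients you list are exactly those the paper uses.
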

\begin{proof}
By \eqref{eq:hh}, we have that
\begin{align}\label{simhh}
Y_{\tau}(h_{i,\hbar},z_1)&Y_{\tau}(h_{j,\hbar},z_2) \sim Y_{\tau}(h_{j,\hbar},z_2)Y_{\tau}(h_{i,\hbar},z_1)+\iota_{z_2,z_1}[b_{\underline{i},\underline{j}}]_{q^{\frac{\partial}{\partial z_2}}}[\ell]_{q^{\frac{\partial}{\partial z_2}}}\\
&\cdot\left(q^{-\ell\frac{\partial}{\partial z_2}}-q^{\ell\frac{\partial}{\partial z_2}}\right)\frac{p^{i-j+m_{\underline{j},\underline{i}}}e^{-z_1+z_2}}{\left(1-p^{i-j+m_{\underline{j},\underline{i}}}e^{-z_1+z_2}\right)^2}Y_{\tau}({\bf 1},z_2)Y_{\tau}({\bf 1},z_1)\nonumber
\end{align}
for $i,j\in\Z$.
In view of  \eqref{tauij}, \eqref{eq:singh}, and \eqref{simhh}, it follows from
Lemma \ref{le:equimod} that
\begin{align*}
&\left[Y_{W}^{\phi}(h_{i,\hbar},p^iz_1),Y_{W}^{\phi}(h_{j,\hbar},p^jz_2)\right]\\
=\ &\iota_{z_2,z_1}\left([b_{\underline{i},\underline{j}}]_{q^{z_2\frac{\partial}{\partial z_2}}}[\ell]_{q^{z_2\frac{\partial}{\partial z_2}}}\left(q^{-z_2\ell\frac{\partial}{\partial z_2}}-q^{\ell z_2\frac{\partial}{\partial z_2}}\right)\frac{p^{m_{\underline{j},\underline{i}}}z_1z_2}{\left(z_1-p^{m_{\underline{j},\underline{i}}}z_2\right)^2}\right)\\
&+\sum_{n\in\Z}\sum_{k\geq 0}\mathrm{Res}_{z}z^{k}[b_{\underline{i},\underline{j}}]_{q^{\frac{\partial}{\partial z}}}[\ell]_{q^{\frac{\partial}{\partial z}}}q^{\ell\frac{\partial}{\partial z}}\frac{p^{i+nN-j+m_{\underline{j},\underline{i}}}e^z}{\left(1-p^{i+nN-j+m_{\underline{j},\underline{i}}}e^z\right)^2}\frac{1}{k!}\left(z_2\frac{\partial}{\partial z_2}\right)^k\delta\left(\frac{p^{-nN+j-i}z_2}{z_1}\right)\\
=\ &\iota_{z_2,z_1}[b_{\underline{i},\underline{j}}]_{q^{z_2\frac{\partial}{\partial z_2}}}[\ell]_{q^{z_2\frac{\partial}{\partial z_2}}}\left(q^{-z_2\ell\frac{\partial}{\partial z_2}}-q^{\ell z_2\frac{\partial}{\partial z_2}}\right)\frac{p^{m_{\underline{j},\underline{i}}}z_1z_2}{\left(z_1-p^{m_{\underline{j},\underline{i}}}z_2\right)^2}\\
&+\mathrm{Res}_{z}[b_{\underline{i},\underline{j}}]_{q^{\frac{\partial}{\partial z}}}[\ell]_{q^{\frac{\partial}{\partial z}}}q^{\ell\frac{\partial}{\partial z}}\frac{e^z}{\left(1-e^z\right)^2}e^{zz_2\frac{\partial}{\partial z_2}}\delta\left(\frac{p^{m_{\underline{j},\underline{i}}}z_2}{z_1}\right)\\
=\ &\iota_{z_2,z_1}\left([b_{\underline{i},\underline{j}}]_{q^{z_2\frac{\partial}{\partial z_2}}}[\ell]_{q^{z_2\frac{\partial}{\partial z_2}}}\left(q^{-z_2\ell\frac{\partial}{\partial z_2}}-q^{\ell z_2\frac{\partial}{\partial z_2}}\right)\frac{p^{m_{\underline{j},\underline{i}}}z_1z_2}{\left(z_1-p^{i-j+m_{\underline{j},\underline{i}}}z_2\right)^2}\right)\\
&+[b_{\underline{i},\underline{j}}]_{q^{z_2\frac{\partial}{\partial z_2}}}[\ell]_{q^{z_2\frac{\partial}{\partial z_2}}}q^{-\ell z_2\frac{\partial}{\partial z_2}}z_2\frac{\partial}{\partial z_2}\delta\left(\frac{p^{m_{\underline{j},\underline{i}}}z_2}{z_1}\right)\\
=\ &[b_{\underline{i},\underline{j}}]_{q^{z_2\frac{\partial}{\partial z_2}}}[\ell]_{q^{z_2\frac{\partial}{\partial z_2}}}\left(\iota_{z_1,z_2}q^{-\ell z_2\frac{\partial}{\partial z_2}}-\iota_{z_2,z_1}q^{\ell z_2\frac{\partial}{\partial z_2}}\right)\frac{p^{m_{\underline{j},\underline{i}}}z_1z_2}{\left(z_1-p^{m_{\underline{j},\underline{i}}}z_2\right)^2},
\end{align*}
where in the penultimate equality, we use the fact that
\begin{align}\label{eq:resfz}
\textrm{Res}_{z}\left(f\left(\frac{\partial}{\partial z}\right)z^{-r-1}\right)e^{zz_1}=\frac{1}{r!}z_1^rf(-z_1)
\end{align}
for every $f(z)\in\C[z][[\hbar]]$ and $r\in\N$. This proves the equality \eqref{eq:Ywphihh}. The proof of the equality \eqref{eq:Ywphihx} is similar, and we omit the details.
\end{proof}

\begin{prpt}\label{pr:modxxpm}
Let $(W,Y_W^{\phi})$ be a $(\Z,\chi_{p^N})$-equivariant $\phi$-coordinated  quasi $V_{\widehat{\mathfrak{sl}}_{\infty},\hbar}(\ell,0)$-module and let $i,j\in\Z$. Then we have that
\begin{eqnarray}\label{eq:Ywphixxpm}
&&\notag p^{-m_{\underline{i},\underline{j}}}\iota_{z_1,z_2}f_{\underline{i},\underline{j}}^{+}(p^{m_{\underline{i},\underline{j}}}z_1,z_2)Y_{W}^{\phi}(x_{i,\hbar}^{\pm},p^iz_1)Y_{W}^{\phi}(x_{j,\hbar}^{\pm},p^jz_2)\\
&=&C_{\underline{i},\underline{j}}\iota_{z_2,z_1}f_{\underline{j},\underline{i}}^{+}(p^{m_{\underline{j},\underline{i}}}z_2,z_1)Y_{W}^{\phi}(x_{j,\hbar}^{\pm},p^jz_2)Y_{W}^{\phi}(x_{i,\hbar}^{\pm},p^iz_1)
\end{eqnarray}
\end{prpt}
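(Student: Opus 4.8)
The argument parallels the proof of Proposition~\ref{pr:modhh}: I would transport the defining relation \eqref{eq:xxpm} of $\mathcal{M}_\tau$ --- which holds in $V_{\widehat{\mathfrak{sl}}_\infty,\hbar}(\ell,0)$, an object of $\mathcal{M}_\tau$ --- to the $(\Z,\chi_{p^N})$-equivariant $\phi$-coordinated quasi module $W$ by means of Lemma~\ref{le:equimod}, and then clear the resulting $\delta$-function corrections using the prefactor that appears in \eqref{eq:Ywphixxpm}. First I would rewrite \eqref{eq:xxpm} as an $S$-locality relation: by the ratio identity \eqref{tijtji} of Lemma~\ref{le:tau}, together with the antisymmetry $m_{\underline j,\underline i}=-m_{\underline i,\underline j}$ and $m_{\underline i,\underline i}=0$ (see \eqref{M}), the relation \eqref{eq:xxpm} is equivalent, in $V_{\widehat{\mathfrak{sl}}_\infty,\hbar}(\ell,0)$, to the $S$-locality
\[
Y_\tau(x_{i,\hbar}^{\pm},z_1)Y_\tau(x_{j,\hbar}^{\pm},z_2)\sim\iota_{z_2,z_1}\!\left(\frac{q^{b_{\underline i,\underline j}}-p^{i-j-m_{\underline i,\underline j}}e^{z_2-z_1}}{1-q^{b_{\underline i,\underline j}}p^{i-j-m_{\underline i,\underline j}}e^{z_2-z_1}}\right)Y_\tau(x_{j,\hbar}^{\pm},z_2)Y_\tau(x_{i,\hbar}^{\pm},z_1).
\]

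Next I would apply Lemma~\ref{le:equimod} with $G=\Z$, $\chi=\chi_{p^N}$, $u=x_{i,\hbar}^{\pm}$, $v=x_{j,\hbar}^{\pm}$ and a single summand, the coefficient $f_1$ being obtained from the displayed function by the substitution $e^{z_2-z_1}\mapsto z_2/z_1$. Its hypothesis on the $(\mathcal{R}_\sigma u)_k v$ is verified from \eqref{eq:singxpm1}--\eqref{eq:singxpm3}: the coefficients $(x_{i+\sigma N,\hbar}^{\pm})_k x_{j,\hbar}^{\pm}$ vanish unless $a_{i+\sigma N,j}\ne 0$, which (since $N\ge 3$) occurs for at most one value of $\sigma$, and for that $\sigma$ they are scalar multiples of $(-\hbar)^k$ or $(2\hbar)^{k+1}$ times a fixed vector, hence lie in $\hbar^nV_{\widehat{\mathfrak{sl}}_\infty,\hbar}(\ell,0)$ for $k$ large. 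Substituting $z_1\mapsto p^iz_1$, $z_2\mapsto p^jz_2$ in the conclusion of Lemma~\ref{le:equimod} and summing the resulting series by $\sum_{k\ge 0}\tfrac{(a\hbar)^k}{k!}(z_2\partial_{z_2})^k\delta(cz_2/z_1)=\delta(e^{a\hbar}cz_2/z_1)$, one gets
\[
Y_W^{\phi}(x_{i,\hbar}^{\pm},p^iz_1)Y_W^{\phi}(x_{j,\hbar}^{\pm},p^jz_2)-\iota_{z_2,z_1}\!\big(g(z_1,z_2)\big)Y_W^{\phi}(x_{j,\hbar}^{\pm},p^jz_2)Y_W^{\phi}(x_{i,\hbar}^{\pm},p^iz_1)=\Delta(z_1,z_2),
\]
where $g(z_1,z_2)=\dfrac{q^{b_{\underline i,\underline j}}z_1-p^{-m_{\underline i,\underline j}}z_2}{z_1-q^{b_{\underline i,\underline j}}p^{-m_{\underline i,\underline j}}z_2}$, and where $\Delta$ is a single $\delta$-term, supported at $z_1=q^2z_2$ when $\underline i=\underline j$ and at $z_1=q^{-1}p^{\pm1}z_2$ when $\underline j\equiv\underline i\pm1\pmod N$ (and $\Delta=0$ otherwise).

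Finally I would multiply this identity by $p^{-m_{\underline i,\underline j}}\iota_{z_1,z_2}f^{+}_{\underline i,\underline j}(p^{m_{\underline i,\underline j}}z_1,z_2)$. Using \eqref{eq:fpm}, the antisymmetry of $M$, $m_{\underline i,\underline i}=0$ and $C_{i,j}=-(-1)^{\delta_{i,j}}$ (see \eqref{cij}), one checks the elementary rational-function identity $p^{-m_{\underline i,\underline j}}f^{+}_{\underline i,\underline j}(p^{m_{\underline i,\underline j}}z_1,z_2)\,g(z_1,z_2)=C_{\underline i,\underline j}\,f^{+}_{\underline j,\underline i}(p^{m_{\underline j,\underline i}}z_2,z_1)$ --- the $\phi$-coordinated analogue of the manipulation \eqref{tildefxij} in the proof of Lemma~\ref{le:xiij} --- which turns the ``clean'' term into the right-hand side of \eqref{eq:Ywphixxpm}; and one observes that the factor $p^{m_{\underline i,\underline j}}z_1-q^{b_{\underline i,\underline j}}z_2$ contained in $f^{+}_{\underline i,\underline j}(p^{m_{\underline i,\underline j}}z_1,z_2)$ vanishes on the support of the $\delta$-function in $\Delta$ (one checks the relevant values $q^2$ and $q^{-1}p^{\pm1}$ against $p^{-m_{\underline i,\underline j}}q^{b_{\underline i,\underline j}}$), the remaining factor being regular there, so that $p^{-m_{\underline i,\underline j}}\iota_{z_1,z_2}f^{+}_{\underline i,\underline j}(p^{m_{\underline i,\underline j}}z_1,z_2)\,\Delta(z_1,z_2)=0$ by $(z_1-\alpha z_2)\delta(\alpha z_2/z_1)=0$. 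Combining these two observations gives \eqref{eq:Ywphixxpm}.

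The main difficulty lies in this last step: one must correctly match the support of each $\delta$-correction against the zero of $f^{+}$ --- which is precisely why the prefactor in \eqref{eq:Ywphixxpm} is shaped the way it is --- and keep careful track of the $\iota_{z_1,z_2}$ versus $\iota_{z_2,z_1}$ expansions when forming products of the rational factors; the case $\underline i=\underline j$, where $f^{+}_{\underline i,\underline j}(p^{m_{\underline i,\underline j}}z_1,z_2)$ carries a pole at $z_1=z_2$, should be handled in the same way as the double-pole term is handled in the proof of Proposition~\ref{pr:modhh}.
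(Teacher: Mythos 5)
Your proposal follows essentially the same route as the paper's proof: pass from \eqref{eq:xxpm} to the $S$-locality \eqref{simxx}, apply Lemma~\ref{le:equimod} using the singular parts \eqref{eq:singxpm1}--\eqref{eq:singxpm3} to identify the single surviving $\delta$-correction, and cancel it against the zero of $f^{+}_{\underline i,\underline j}(p^{m_{\underline i,\underline j}}z_1,z_2)$; your support computations for $\Delta$ and the rational identity converting the ``clean'' term match the paper's case analysis. The one point you should make explicit is the case $\underline i=\underline j$: the right mechanism is not an analogue of the double-pole manipulation in Proposition~\ref{pr:modhh}, but rather to first multiply the $\Delta$-identity by the polynomial $(z_1-q^2z_2)$, obtaining $(z_1-q^2z_2)Y_W^{\phi}Y_W^{\phi}=(q^2z_1-z_2)Y_W^{\phi}Y_W^{\phi}$ and hence $\bigl((z_1-q^2z_2)Y_{W}^{\phi}(x_{i,\hbar}^{\pm},p^iz_1)Y_{W}^{\phi}(x_{i,\hbar}^{\pm},p^iz_2)\bigr)\big|_{z_1=z_2}=0$ (the paper's \eqref{eq:ywphipm}), which is precisely what annihilates the extra $z_1^{-1}\delta(z_1/z_2)$ term arising from the mismatch between $\iota_{z_1,z_2}(z_1-z_2)^{-1}$ and $\iota_{z_2,z_1}(z_1-z_2)^{-1}$ when you reinstate the factor $(z_1-z_2)^{-1}$ of $f^{+}_{\underline i,\underline i}$.
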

\begin{proof}
By  \eqref{eq:xxpm}, we get that
\begin{align}\label{simxx}
&Y_{\tau}(x_{i,\hbar}^{\pm},z_1)Y_{\tau}(x_{j,\hbar}^{\pm},z_2)\\
\sim & \iota_{z_2,z_1}\left(\tau_{ji}^{\pm,\pm}(z_2-z_1)\tau_{ij}^{\pm,\pm}(-z_2+z_1)^{-1}\right)Y_{\tau}(x_{j,\hbar}^{\pm},z_2)Y_{\tau}(x_{i,\hbar}^{\pm},z_1).\nonumber
\end{align}
Set
\begin{align*}
A(z_1,z_2)=&Y_{W}^{\phi}(x_{i,\hbar}^{\pm},z_1)Y_{W}^{\phi}(x_{j,\hbar}^{\pm},z_2)-\iota_{z_2,z_1}\left(\tau_{ji}^{\pm,\pm}(z_2-z_1)\right)\\
&\cdot\iota_{z_2,z_1}\left(\tau_{ij}^{\pm,\pm}(-z_2+z_1)^{-1}\right)Y_{W}^{\phi}(x_{j,\hbar}^{\pm},z_2)Y_{W}^{\phi}(x_{i,\hbar}^{\pm},z_1).
\end{align*}
Then by \eqref{tijpm}, we have that
\begin{align*}
A(p^iz_1,p^jz_2)=&Y_{W}^{\phi}(x_{i,\hbar}^{\pm},p^iz_1)Y_{W}^{\phi}(x_{j,\hbar}^{\pm},p^jz_2)-p^{m_{\underline{i},\underline{j}}}C_{\underline{i},\underline{j}}\iota_{z_2,z_1}\left(f_{\underline{j},\underline{i}}^{+}(p^{m_{\underline{j}\underline{i}}}z_2,z_1)\right)\\
&\cdot\iota_{z_2,z_1}\left(f_{\underline{i},\underline{j}}^{+}(p^{m_{\underline{i}\underline{j}}}z_1,z_2)^{-1}\right)Y_{W}^{\phi}(x_{j,\hbar}^{\pm},p^jz_2)Y_{W}^{\phi}(x_{i,\hbar}^{\pm},p^iz_1).
\end{align*}
Now we divide the proof into three cases.

\setlist[enumerate]{label=\arabic*., leftmargin=*}
\begin{enumerate}[wide]
 \item [$\textbf{Case 1.}$] $b_{\underline{i},\underline{j}}=0$. In this case, in view of  \eqref{eq:singxpm1} and \eqref{simxx}, it follows from  Lemma \ref{le:equimod} that
 \begin{align*}
&A(p^iz_1,p^jz_2)=\sum_{n\in\Z}\sum_{k\geq 0}Y_W^{\phi}\left((x_{i+nN,\hbar}^{\pm})_{k}x_{j,\hbar}^{\pm},p^jz_2\right)\frac{1}{k!}\left(z_2\frac{\partial }{\partial z_2}\right)^k\delta\left(\frac{p^{-nN+j-i}z_2}{z_1}\right)=0.
 \end{align*}
The equality \eqref{eq:Ywphixxpm} then follows by
multiplying  both sides of the above equality by  $z_1-z_2$.
\vspace{3mm}

 \item [$\textbf{Case 2.}$] $b_{\underline{i},\underline{j}}<0$. In this case, in view of  \eqref{eq:singxpm2}  and  \eqref{simxx}, it follows from Lemma \ref{le:equimod} that
 \begin{align*}
 A(p^iz_1,p^jz_2)=\ &\sum_{n\in\Z}\sum_{k\geq 0}Y_W^{\phi}\left((x_{i+nN,\hbar}^{\pm})_{k}x_{j,\hbar}^{\pm},p^jz_2\right)\frac{1}{k!}\left(z_2\frac{\partial }{\partial z_2}\right)^k\delta\left(\frac{p^{-nN+j-i}z_2}{z_1}\right)\\
 =\ &Y_W^{\phi}\left((x_{j+m_{\underline{i},\underline{j}},\hbar}^{\pm})_{0}x_{j,\hbar}^{\pm},p^jz_2\right)e^{-\hbar z_2\frac{\partial }{\partial z_2}}\delta\left(\frac{p^{-m_{\underline{i},\underline{j}}}z_2}{z_1}\right)\\
 =\ &Y_W^{\phi}\left((x_{j+m_{\underline{i},\underline{j}},\hbar}^{\pm})_{0}x_{j,\hbar}^{\pm},p^jz_2\right)\delta\left(\frac{p^{-m_{\underline{i},\underline{j}}}q^{-1}z_2}{z_1}\right).
 \end{align*}
 Multiplying both sides of this equality by $p^{m_{\underline{i},\underline{j}}}(z_1-q^{-1}p^{-m_{\underline{i},\underline{j}}}z_2)$, we obtain \eqref{eq:Ywphixxpm}.
 \vspace{3mm}

\item [$\textbf{Case 3.}$] $b_{\underline{i},\underline{j}}=2$. In this case, by   Lemma \ref{le:equimod}, \eqref{eq:singxpm3} and  \eqref{simxx}, we have  that
 \begin{align*}
 A(p^iz_1,p^jz_2)=\ &\sum_{n\in\Z}\sum_{k\geq 0}Y_W^{\phi}\left((x_{i+nN,\hbar}^{\pm})_{k}x_{j,\hbar}^{\pm},p^jz_2\right)\frac{1}{k!}\left(z_2\frac{\partial }{\partial z_2}\right)^k\delta\left(\frac{p^{-nN+j-i}z_2}{z_1}\right)\\
 =\ &2\hbar Y_W^{\phi}\left((x_{j,\hbar}^{\pm})_{-1}x_{j,\hbar}^{\pm},z_2\right)e^{2\hbar z_2\frac{\partial }{\partial z_2}}\delta\left(\frac{z_2}{z_1}\right)\\
 =\ &2\hbar Y_W^{\phi}\left((x_{j,\hbar}^{\pm})_{-1}x_{j,\hbar}^{\pm},z_2\right)\delta\left(\frac{q^{2}z_2}{z_1}\right).
 \end{align*}
Multiplying both sides by $q^{-1}(z_1-q^{2}z_2)$, we obtain that
 \begin{align*}
 (z_1-q^2z_2)Y_{W}^{\phi}(x_{i,\hbar}^{\pm},p^iz_1)Y_{W}^{\phi}(x_{i,\hbar}^{\pm},p^iz_2)=(q^2z_1-z_2)Y_{W}^{\phi}(x_{j,\hbar}^{\pm},p^iz_2)Y_{W}^{\phi}(x_{i,\hbar}^{\pm},p^iz_1).
 \end{align*}
This implies that
\begin{align}\label{eq:ywphipm}
\left((z_1-q^2z_2)Y_{W}^{\phi}(x_{i,\hbar}^{\pm},p^iz_1)Y_{W}^{\phi}(x_{i,\hbar}^{\pm},p^iz_2)\right)|_{z_1=z_2}=0.
\end{align}

It is straightforward to see  that
\begin{align*}
&p^{-m_{\underline{i},\underline{j}}}\iota_{z_1,z_2}f_{\underline{i},\underline{j}}^{+}(p^{m_{\underline{i},\underline{j}}}z_1,z_2)Y_{W}^{\phi}(x_{i,\hbar}^{\pm},p^iz_1)Y_{W}^{\phi}(x_{j,\hbar}^{\pm},p^jz_2)\\
&- C_{\underline{i},\underline{j}}\iota_{z_2,z_1}f_{\underline{j},\underline{i}}^{+}(p^{m_{\underline{j},\underline{i}}}z_2,z_1)Y_{W}^{\phi}(x_{j,\hbar}^{\pm},p^jz_2)Y_{W}^{\phi}(x_{i,\hbar}^{\pm},p^iz_1)\\
=\ &\delta_{i,j}z_1^{-1}\delta\left(\frac{z_1}{z_2}\right)\left((z_1-q^2z_2)Y_{W}^{\phi}(x_{i,\hbar}^{\pm},p^iz_1)Y_{W}^{\phi}(x_{i,\hbar}^{\pm},p^iz_2)\right).
\end{align*}
 This together with \eqref{eq:ywphipm} implies \eqref{eq:Ywphixxpm}, as desired.
 \end{enumerate}
\end{proof}

We also have the following result.
\begin{prpt}\label{pr:modxxmp}
Let $(W,Y_W^{\phi})$ be a $(\Z,\chi_{p^N})$-equivariant $\phi$-coordinated  quasi $V_{\widehat{\mathfrak{sl}}_{\infty},\hbar}(\ell,0)$-module and let $i,j\in\Z$. Then we have that
\begin{align*}
&Y_{W}^{\phi}(x_{i,\hbar}^{+},p^iz_1)Y_{W}^{\phi}(x_{j,\hbar}^{-},p^jz_2)-\iota_{z_2,z_1}\left(\frac{z_2-q^{b_{\underline{i},\underline{j}}}p^{m_{\underline{i},\underline{j}}}z_1}{q^{b_{\underline{i},\underline{j}}}z_2-p^{m_{\underline{i},\underline{j}}}z_1}\right)Y_{W}^{\phi}(x_{j,\hbar}^{-},p^jz_2)Y_{W}^{\phi}(x_{i,\hbar}^{+},p^iz_1)\\
=\ &{\delta_{\underline{i},\underline{j}}}(q-q^{-1})^{-1}\left(\delta\left(\frac{z_2}{z_1}\right)-E\left(Y_W^{\phi}(h_{i,\hbar},z_2)\right)\delta\left(\frac{q^{-2\ell}z_2}{z_1}\right)\right).
\end{align*}
\end{prpt}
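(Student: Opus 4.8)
The plan is to imitate the proofs of Propositions~\ref{pr:modhh} and~\ref{pr:modxxpm}: first establish an $S$-locality relation for $Y_\tau(x_{i,\hbar}^+,z_1)$ and $Y_\tau(x_{j,\hbar}^-,z_2)$ inside $V_{\widehat{\mathfrak{sl}}_\infty,\hbar}(\ell,0)$, then transport it to $W$ via Lemma~\ref{le:equimod}, and finally perform the rescaling $z_1\mapsto p^iz_1$, $z_2\mapsto p^jz_2$.

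First, from the defining relation~\eqref{eq:xxmp} with the upper sign, exactly as~\eqref{simxx} is deduced from~\eqref{eq:xxpm}, one obtains
\[
Y_\tau(x_{i,\hbar}^+,z_1)Y_\tau(x_{j,\hbar}^-,z_2)\ \sim\ \iota_{z_2,z_1}\!\big(\tau_{ji}^{-,+}(z_2-z_1)\,\tau_{ij}^{+,-}(z_1-z_2)^{-1}\big)\,Y_\tau(x_{j,\hbar}^-,z_2)Y_\tau(x_{i,\hbar}^+,z_1).
\]
By the last identity in~\eqref{tijtji} the coefficient here is a genuine element of $\C((z_1-z_2))[[\hbar]]$, equal to $\iota_{z_2,z_1}$ applied to $\big(q^{b_{\underline i,\underline j}}-p^{\,j-i+m_{\underline i,\underline j}}e^{z_1-z_2}\big)\big(1-q^{b_{\underline i,\underline j}}p^{\,j-i+m_{\underline i,\underline j}}e^{z_1-z_2}\big)^{-1}$; in particular the spurious factors $z_1-z_2\pm2\ell\hbar$ coming from $\tau^{+,-}$ and $\tau^{-,+}$ cancel. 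Moreover, for $k\neq l$ this coefficient (with $b_{\underline k,\underline l}$, $m_{\underline k,\underline l}$ in place of $b_{\underline i,\underline j}$, $m_{\underline i,\underline j}$) is regular at $z_1=z_2$, because $q^{b_{\underline k,\underline l}}p^{\,l-k+m_{\underline k,\underline l}}$ is never identically $1$ in $\C[[\hbar]]$ when $k\neq l$; hence $\mathrm{Sing}_z Y_\tau(x_{k,\hbar}^+,z)x_{l,\hbar}^-=0$ for $k\neq l$.

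Next I would apply Lemma~\ref{le:equimod} with $u=x_{i,\hbar}^+$, $v=x_{j,\hbar}^-$. Its finiteness hypothesis holds because $\rho_{n,\hbar}(x_{i,\hbar}^+)=x_{i+nN,\hbar}^+$, so by the previous paragraph together with~\eqref{eq:singxmp} the product $(x_{i+nN,\hbar}^+)_k x_{j,\hbar}^-$ is nonzero for at most one value of $n$, namely $n_0:=(j-i)/N$, which exists precisely when $\underline i=\underline j$. After substituting $z_1\mapsto p^iz_1$, $z_2\mapsto p^jz_2$, the $S$-coefficient becomes the rational function on the left of the claim, while, using~\eqref{eq:singxmp}, the expansion $(z+2\ell\hbar)^{-1}=\sum_{k\geq0}(-2\ell\hbar)^kz^{-k-1}$ and $\sum_{k\geq0}\frac1{k!}\big(-2\ell\hbar\,z_2\partial_{z_2}\big)^k=e^{-2\ell\hbar z_2\partial_{z_2}}$, the right-hand side collapses to
\[
\frac{\delta_{\underline i,\underline j}}{q-q^{-1}}\Big(\delta\big(\tfrac{z_2}{z_1}\big)-Y_W^\phi(E(h_{j,\hbar}),p^jz_2)\,e^{-2\ell\hbar z_2\frac{\partial}{\partial z_2}}\delta\big(\tfrac{z_2}{z_1}\big)\Big),
\]
and $e^{-2\ell\hbar z_2\partial_{z_2}}\delta(z_2/z_1)=\delta(q^{-2\ell}z_2/z_1)$ produces the stated $\delta$-function (the factor $\delta_{\underline i,\underline j}$ absorbs the case $\underline i\neq\underline j$, where the right-hand side is $0$).

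It remains to identify $Y_W^\phi(E(h_{j,\hbar}),p^jz_2)$ with $E\!\left(Y_W^\phi(h_{i,\hbar},z_2)\right)$. I would use $E(h_{j,\hbar})=\rho_{n_0,\hbar}(E(h_{i,\hbar}))$ — legitimate since $\rho_{n_0,\hbar}$ is an $\hbar$-adic quantum vertex algebra automorphism with $\rho_{n_0,\hbar}(h_{i,\hbar})=h_{j,\hbar}$ and $E(\cdot)$ is built from vertex operations — and then the $(\Z,\chi_{p^N})$-equivariance $Y_W^\phi(\rho_{n,\hbar}v,z)=Y_W^\phi(v,p^{-nN}z)$, which reduces the term to $Y_W^\phi(E(h_{i,\hbar}),p^iz_2)$; the remaining identity $Y_W^\phi(E(h_{i,\hbar}),\,\cdot\,)=E\!\left(Y_W^\phi(h_{i,\hbar},\,\cdot\,)\right)$ records that $Y_W^\phi(\cdot,z)$ intertwines the two nonlocal vertex algebra structures and that $Y_W^\phi(h_{i,\hbar},z)$ lies in $\langle U_W\rangle_\phi$, so that $E$ of this field (in the sense of~\eqref{eq:defE}) is the image of $E(h_{i,\hbar})$. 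I expect this last point — passing the normally ordered exponential $E(h_{i,\hbar})$ through the $\phi$-coordinated vertex operator map and matching it with $E$ of the corresponding field — to be the main technical obstacle, and would handle it by the $\phi$-coordinated analogue of the computations in~\cite{K} underlying~\eqref{eq:singxmp}.
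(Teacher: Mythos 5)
Your proposal follows essentially the same route as the paper's proof: derive the $S$-locality relation \eqref{simxxmp} from \eqref{eq:xxmp}, feed it into Lemma \ref{le:equimod} together with \eqref{eq:singxmp}, evaluate the resulting $\delta$-function terms via \eqref{eq:resfz}, and identify $Y_W^{\phi}(E(h_{i,\hbar}),\cdot)$ with $E$ of the corresponding field; this last step, which you flag as the main obstacle, is exactly what \eqref{eq:phitau} is recorded for, so it causes no difficulty, and your use of equivariance to pin the argument down to $p^iz_2$ is more careful than the paper's own bookkeeping. The one imprecise point is your justification that $\mathrm{Sing}_zY_{\tau}(x_{k,\hbar}^{+},z)x_{l,\hbar}^{-}=0$ for $k\neq l$: when $|k-l|=1$ one has $l-k+m_{\underline{k},\underline{l}}=0$, so $q^{b_{\underline{k},\underline{l}}}p^{l-k+m_{\underline{k},\underline{l}}}=q^{-1}\equiv 1\pmod{\hbar}$ and the $S$-coefficient is \emph{not} regular at $z_1=z_2$ (its positive $\hbar$-order coefficients acquire poles there), so regularity cannot be the reason. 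The vanishing is nevertheless true — since $\delta_{k,l}=0$, relation \eqref{eq:xxmp} holds exactly without any $(z_1-z_2)$-powers, and $\iota_{z_2,z_1}$ of the coefficient involves only nonnegative powers of $z_1$, so the nonnegative modes of $x_{k,\hbar}^{+}$ pass through $x_{l,\hbar}^{-}(z_2)$ and annihilate ${\bf 1}$ in $F_{\tau}(A,\ell)=\mathcal{A}/\mathcal{A}_{+}$ — and the paper itself uses this fact silently (the factor $\delta_{i+nN,j}$ in its computation), so this does not change the substance of the argument.
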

\begin{proof}
By  \eqref{eq:xxmp}, we get that
\begin{align}\label{simxxmp}
&Y_{\tau}(x_{i,\hbar}^{+},z_1)Y_{\tau}(x_{j,\hbar}^{-},z_2)\\
\sim\ &\iota_{z_2,z_1}\left(\tau_{ji}^{-,+}(z_2-z_1)\tau_{ij}^{+,-}(-z_2+z_1)^{-1}\right)Y_{\tau}(x_{j,\hbar}^{-},z_2)Y_{\tau}(x_{i,\hbar}^{+},z_1).\nonumber
\end{align}
In view of \eqref{tij+-}, \eqref{tij-+}, \eqref{eq:singxmp}, and \eqref{simxxmp}, it follows from Lemma \ref{le:equimod} that
\begin{align*}
&Y_{W}^{\phi}(x_{i,\hbar}^{+},p^iz_1)Y_{W}^{\phi}(x_{j,\hbar}^{-},p^jz_2)-\iota_{z_2,z_1}\left(\frac{z_2-q^{b_{\underline{i},\underline{j}}}p^{m_{\underline{i},\underline{j}}}z_1}{q^{b_{\underline{i},\underline{j}}}z_2-p^{m_{\underline{i},\underline{j}}}z_1}\right)Y_{W}^{\phi}(x_{j,\hbar}^{-},p^jz_2)Y_{W}^{\phi}(x_{i,\hbar}^{+},p^iz_1)\\
=\ &\sum_{n\in\Z}\sum_{k\geq 0}\textrm{Res}_{z}z^k\delta_{i+nN,j}Y_W^{\phi}\left(Y(x_{i+nN,\hbar}^{+},z)x_{j,\hbar}^{-},p^jz_2\right)\frac{1}{k!}\left(z_2\frac{\partial }{\partial z_2}\right)^k\delta\left(\frac{z_2}{z_1}\right)\\
=\ &{\delta_{\underline{i},\underline{j}}}\textrm{Res}_{z}Y_W^{\phi}\left(Y(x_{i,\hbar}^{+},z)x_{i,\hbar}^{-},p^jz_2\right)e^{zz_2\frac{\partial }{\partial z_2}}\delta\left(\frac{z_2}{z_1}\right)\\
=\ &{\delta_{\underline{i},\underline{j}}}(q-q^{-1})^{-1}\left(\delta\left(\frac{z_2}{z_1}\right)-\textrm{Res}_{z}Y_W^{\phi}\left(E(h_{i,\hbar}),p^jz_2\right)(z+2\ell\hbar)^{-1}e^{zz_2\frac{\partial }{\partial z_2}}\delta\left(\frac{z_2}{z_1}\right)\right)\\
=\ &{\delta_{\underline{i},\underline{j}}}(q-q^{-1})^{-1}\left(\delta\left(\frac{z_2}{z_1}\right)-Y_W^{\phi}\left(E(h_{i,\hbar}),p^jz_2\right)\delta\left(\frac{q^{-2\ell}z_2}{z_1}\right)\right)\\
=\ &{\delta_{\underline{i},\underline{j}}}(q-q^{-1})^{-1}\left(\delta\left(\frac{z_2}{z_1}\right)-E\left(Y_W^{\phi}(h_{i,\hbar},p^jz_2)\right)\delta\left(\frac{q^{-2\ell}z_2}{z_1}\right)\right),
\end{align*}
where we use \eqref{eq:resfz} in the penultimate equality, and  use the facts that
\begin{align}
\label{eq:phitau}Y_W^{\phi}(Y_{\tau}(u,z)v,z_1)\ &=\ Y_{\mathcal{E}}^{\phi}(Y_W^{\phi}(u,z_1),z)Y_W^{\phi}(v,z_1),\\
Y_W^{\phi}(\partial(u),z)\ &=\ z\frac{\partial}{\partial z}Y_W^{\phi}(u,z)\nonumber
\end{align}
for $u,v\in V_{\widehat{\mathfrak{sl}}_{\infty},\hbar}(\ell,0)$ in the last equality. This completes the proof.

\end{proof}

The following is the main result of this section, which gives an isomorphism between the category of $(\Z,\chi_{p^N})$-equivariant $\phi$-coordinated  quasi $V_{\widehat{\mathfrak{sl}}_{\infty},\hbar}(\ell,0)$-modules and the category $\mathcal{R}_{\ell}^{\phi}$.
 \begin{prpt}\label{pr:vglRphi}
 Let $\left(W,(\psi_{i}(z),y_{i}^{\pm}(z))_{0\leq i\leq N-1}\right)$ be an object of $\mathcal{R}_{\ell}^{\phi}$. Then $W$ is a $(\Z,\chi_{p^N})$-equivariant $\phi$-coordinated  quasi $V_{\widehat{\mathfrak{sl}}_{\infty},\hbar}(\ell,0)$-module with
 \[Y_W^{\phi}(h_{i,\hbar},z)=\psi_{\underline{i}}(p^{-i}z)\quad\textrm{and}\quad Y_W^{\phi}(x_{i,\hbar}^{\pm},z)=y_{\underline{i}}^{\pm}(p^{-i}z)\quad\textrm{for } i\in\Z.\]
 On the other hand, let $(W,Y_W^{\phi})$ be a $(\Z,\chi_{p^N})$-equivariant $\phi$-coordinated  quasi $V_{\widehat{\mathfrak{sl}}_{\infty},\hbar}(\ell,0)$-module. Then \[\left(W, (Y_{W}^{\phi}(h_{k,\hbar},p^kz),Y_{W}^{\phi}(x_{k,\hbar}^{\pm},p^kz))_{0\leq k\leq N-1}\right)\] is an object of $\mathcal{R}_{\ell}^{\phi}$.
 \end{prpt}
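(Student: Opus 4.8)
The plan is to treat the two assertions separately, and to note at the end that the resulting constructions are mutually inverse and obviously intertwine morphisms (in Direction~A below one gets $Y_W^\phi(h_{k,\hbar},p^kz)=\psi_k(z)$ and $Y_W^\phi(x_{k,\hbar}^\pm,p^kz)=y_k^\pm(z)$ for $0\le k\le N-1$, which is exactly the data Direction~B extracts), so that Proposition~\ref{pr:vglRphi} upgrades to an isomorphism of categories. \emph{Direction~A: from an object of $\mathcal{R}_\ell^\phi$ to a module.} Let $\left(W,(\psi_i(z),y_i^\pm(z))_{0\le i\le N-1}\right)\in\mathcal{R}_\ell^\phi$ and $U_W=\{\psi_i(z),y_i^\pm(z)\mid 0\le i\le N-1\}$. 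As recalled before the statement, $U_W$ is $\hbar$-adically $(\Z,\chi_p)$-quasi compatible, $(\langle U_W\rangle_\phi,\mathcal{R}_N)$ is an $\hbar$-adic nonlocal $\Z$-vertex algebra, and $W$ is a faithful $(\Z,\chi_{p^N})$-equivariant $\phi$-coordinated quasi $\langle U_W\rangle_\phi$-module; in particular $\langle U_W\rangle_\phi$ is $\mathcal{R}$-stable, so for $i\in\Z$ the operators $\bar h_{i,\hbar}(z):=\psi_{\underline i}(p^{-i}z)=\mathcal{R}_i\psi_{\underline i}(z)$ and $\bar x_{i,\hbar}^\pm(z):=y_{\underline i}^\pm(p^{-i}z)=\mathcal{R}_i y_{\underline i}^\pm(z)$ lie in $\langle U_W\rangle_\phi$. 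The first step is to show that $\big(\langle U_W\rangle_\phi,(Y_{\mathcal{E}}^\phi(\bar h_{i,\hbar},z),Y_{\mathcal{E}}^\phi(\bar x_{i,\hbar}^\pm,z))_{i\in\Z}\big)$ is an object of $\mathcal{M}_\tau$. Rescaling $z_1\mapsto p^{-i}z_1$, $z_2\mapsto p^{-j}z_2$ in the relations \eqref{eq:psi}--\eqref{eq:y++} and \eqref{eq:y+-'} (which hold for $0\le i,j\le N-1$) yields the corresponding operator relations among the $\bar h_{i,\hbar}(z)$ and $\bar x_{i,\hbar}^\pm(z)$ for all $i,j\in\Z$, now carrying the appropriate integer powers of $p$; plugging these into Lemma~\ref{le:Yephi} produces exactly \eqref{eq:hh}--\eqref{eq:xxmp} for the pair $\big(Y_{\mathcal{E}}^\phi(\bar h_{i,\hbar},z),Y_{\mathcal{E}}^\phi(\bar x_{i,\hbar}^\pm,z)\big)$, once one matches the rational coefficients against the definitions \eqref{tauij}--\eqref{tijpm} of the $\tau$'s under the dictionary $f(z_1/z_2)\leftrightarrow f(e^{z_1-z_2})$ built into that lemma---this matching being precisely the reverse of the computations in the proofs of Propositions~\ref{pr:modhh}, \ref{pr:modxxpm}, \ref{pr:modxxmp}. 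Proposition~\ref{le:uniofFtau} then provides an $\hbar$-adic nonlocal vertex algebra homomorphism $\varphi\colon F_\tau(A,\ell)\to\langle U_W\rangle_\phi$ with $\varphi(h_{i,\hbar})=\bar h_{i,\hbar}(z)$ and $\varphi(x_{i,\hbar}^\pm)=\bar x_{i,\hbar}^\pm(z)$.

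Next I would verify that $\varphi$ annihilates the generating elements \eqref{eq:Rell1}--\eqref{eq:Rell3} of $R_\ell$; since $W$ is a faithful $\langle U_W\rangle_\phi$-module it suffices to check the corresponding operator identities on $W$. For \eqref{eq:Rell1} and \eqref{eq:Rell2} these are read off from \eqref{eq:y+-'} at $i=j$ by extracting its $\phi$-coordinated operator product expansion and using \eqref{eq:defE} to identify the image of $E(h_{i,\hbar})$ with $E(\psi_i(z))$ (again the computation of the proof of Proposition~\ref{pr:modxxmp}, run backwards). For \eqref{eq:Rell3} I use \eqref{eq:serrey}: if $b_{\underline i,\underline j}=0$ then \eqref{eq:y++} already forces $\big(\bar x_{i,\hbar}^\pm(z)\big)_0^\phi\bar x_{j,\hbar}^\pm(z)=0$, because for a compatible pair annihilated by $(z_1-z_2)$ one has $Y_{\mathcal{E}}^\phi(a(z),z_0)b(z)=a(ze^{z_0})b(z)$, hence vanishing $\phi$-products in nonnegative degree; if $b_{\underline i,\underline j}=-1$ one rewrites $\bar x_{j,\hbar}^\pm(z)$ via $\mathcal{R}$ as $\bar x_{j'',\hbar}^\pm(z)$ with $j''=\underline i-m_{\underline i,\underline j}$ (so that $a_{\underline i,j''}=-1$ and $j''\equiv\underline j\pmod N$), and then \eqref{eq:Rell3} follows from \eqref{eq:serrey}. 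As $\langle U_W\rangle_\phi$ is topologically free, $\ker\varphi$ is a closed ideal with $[\ker\varphi]=\ker\varphi$, so $R_\ell\subseteq\ker\varphi$ and $\varphi$ descends to $\bar\varphi\colon V_{\widehat{\mathfrak{sl}}_\infty,\hbar}(\ell,0)\to\langle U_W\rangle_\phi$. Since $\bar\varphi(h_{i+nN,\hbar})=\psi_{\underline i}(p^{-i-nN}z)=\mathcal{R}_{nN}\bar h_{i,\hbar}(z)$ and similarly for $x_{i,\hbar}^\pm$, the map $\bar\varphi$ intertwines $\rho_\hbar$ with $\mathcal{R}_N$; pulling the faithful $(\Z,\chi_{p^N})$-equivariant $\phi$-coordinated quasi $\langle U_W\rangle_\phi$-module structure of $W$ back along $\bar\varphi$ gives the required $V_{\widehat{\mathfrak{sl}}_\infty,\hbar}(\ell,0)$-module structure, for which $Y_W^\phi(h_{i,\hbar},z)=\psi_{\underline i}(p^{-i}z)$ and $Y_W^\phi(x_{i,\hbar}^\pm,z)=y_{\underline i}^\pm(p^{-i}z)$.

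\emph{Direction~B: from a module to an object of $\mathcal{R}_\ell^\phi$.} Given a $(\Z,\chi_{p^N})$-equivariant $\phi$-coordinated quasi $V_{\widehat{\mathfrak{sl}}_\infty,\hbar}(\ell,0)$-module $(W,Y_W^\phi)$, set $\psi_k(z)=Y_W^\phi(h_{k,\hbar},p^kz)$ and $y_k^\pm(z)=Y_W^\phi(x_{k,\hbar}^\pm,p^kz)$ for $0\le k\le N-1$, which lie in $\mathcal{E}_\hbar(W)$. Then \eqref{eq:psi}, \eqref{eq:psiy} and \eqref{eq:y++} are the specializations of \eqref{eq:Ywphihh}, \eqref{eq:Ywphihx} and \eqref{eq:Ywphixxpm} to $0\le i,j\le N-1$ (where $m_{\underline j,\underline i}=-m_{i,j}$), relation \eqref{eq:y+-'} is Proposition~\ref{pr:modxxmp} specialized to $0\le i,j\le N-1$ together with the identity $E\big(Y_W^\phi(h_{i,\hbar},z)\big)=Y_W^\phi\big(E(h_{i,\hbar}),z\big)$ coming from \eqref{eq:defE} and \eqref{eq:phitau}, and \eqref{eq:y+-} follows from \eqref{eq:y+-'} upon multiplying by $(z_1-z_2)^{\delta_{i,j}}(z_1-q^{-2\ell}z_2)^{\delta_{i,j}}$. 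As for \eqref{eq:serrey}: when $b_{i,j}=0$ it follows from \eqref{eq:y++} exactly as in the second paragraph, and when $b_{i,j}=-1$ one applies $Y_W^\phi(\cdot,z)$ to the relation $\big((x_{i,\hbar}^\pm)_0\big)^2 x_{j'',\hbar}^\pm=0$ of $V_{\widehat{\mathfrak{sl}}_\infty,\hbar}(\ell,0)$ with $j''=i-m_{i,j}$, uses \eqref{eq:phitau} and the $(\Z,\chi_{p^N})$-equivariance to replace $Y_W^\phi(x_{j'',\hbar}^\pm,z)$ by an appropriate rescaling of $y_j^\pm$, and then rescales $z$ to reach $\big((y_i^\pm(z))_0^\phi\big)^2 y_j^\pm(p^{m_{i,j}}z)=0$.

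\emph{Main obstacle.} The crux is the construction in Direction~A: proving that the translated operators $\bar h_{i,\hbar}(z),\bar x_{i,\hbar}^\pm(z)$ satisfy \eqref{eq:hh}--\eqref{eq:xxmp} under $Y_{\mathcal{E}}^\phi$, and that $\varphi$ kills $R_\ell$. Both require running in reverse the delicate identifications underlying Propositions~\ref{pr:modhh}--\ref{pr:modxxmp} and the technical identities defining the functions \eqref{tauij}--\eqref{tijpm}, and---what is more error-prone---keeping track of the many powers of $p$: the $(\Z,\chi_{p^N})$-equivariance only absorbs rescalings by powers of $p^N$, whereas the operators $\bar h_{i,\hbar},\bar x_{i,\hbar}^\pm$ and the coefficients in \eqref{eq:hh}--\eqref{eq:xxmp} involve rescalings by $p^i$ for arbitrary $i\in\Z$, forcing one to pass repeatedly between the $\mathcal{M}_\ell^\phi$-relations (indexed by $0\le i\le N-1$) and their $\mathcal{R}$-translates.
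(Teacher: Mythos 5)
Your proposal is correct and follows essentially the same route as the paper's proof: Direction A builds $\langle U_W\rangle_{\phi}$, verifies via Lemma \ref{le:Yephi} that the rescaled fields form an object of $\mathcal{M}_{\tau}$, invokes Proposition \ref{le:uniofFtau} to get $\varphi\colon F_{\tau}(A,\ell)\to\langle U_W\rangle_{\phi}$, and kills $R_{\ell}$ using \eqref{eq:y+-'} and \eqref{eq:serrey}; Direction B reduces to Propositions \ref{pr:modhh}--\ref{pr:modxxmp} plus the Serre relation via \eqref{eq:Rell3}, \eqref{eq:modequicon} and \eqref{eq:phitau}. The only caveat is that your case split for \eqref{eq:Rell3} by the value of $b_{\underline{i},\underline{j}}$ should really be by $a_{i,j}$ (when $a_{i,j}=0$ but $b_{\underline{i},\underline{j}}=-1$ the product is nonsingular because the pole sits at $z_1=\alpha z_2$ with $\alpha\ne 1$), but this is at the same level of detail the paper itself omits.
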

 \begin{proof}
Let $\left(W,(\psi_{i}(z),y_{i}^{\pm}(z))_{0\leq i\leq N-1}\right)\in \mathcal{R}_{\ell}^{\phi}$. Recall that $(\langle U_W\rangle_{\phi},\mathcal{R}_{N})$ is an $\hbar$-adic nonlocal $\Z$-vertex algebra  such that $W$ is a faithful $(\Z,\chi_{p^N})$-equivariant $\phi$-coordinated quasi $\langle U_W\rangle_{\phi}$-module.
 In view of \eqref{eq:psi}-\eqref{eq:y++}, it follows from  Lemma \ref{le:Yephi} that \[\left(\langle U_W\rangle_{\phi}, (Y_{\mathcal{E}}^{\phi}(\psi_{\underline{i}}(p^{-i}z_1),z),Y_{\mathcal{E}}^{\phi}(y_{\underline{i}}^{\pm}(p^{-i}z_1),z))_{i\in\Z}\right)\] is an object of $\mathcal{M}_{\tau}$.
Then by Lemma \ref{le:uniofFtau}, there is an $\hbar$-adic nonlocal vertex algebra homomorphism
 \[\varphi:\ F_{\tau}(A,\ell)\to \langle U_W\rangle_{\phi}\]
 such that
 \[\varphi(h_{i,\hbar})=\psi_{\underline{i}}(p^{-i}z)\quad \textrm{and}\quad\varphi(x_{i,\hbar}^{\pm})=y_{\underline{i}}^{\pm}(p^{-i}z)\quad \textrm{for }i\in\Z.\]

According to \eqref{eq:y+-'} and  Lemma \ref{le:Yephi}, we have that
 \begin{align}
 &\nonumber Y_{\mathcal{E}}^{\phi}(y_{\underline{i}}^{+}(p^{-i}z),z_1)Y_{\mathcal{E}}^{\phi}(y_{\underline{j}}^{-}(p^{-j}z),z_2)\\
 &\nonumber-\iota_{z_2,z_1}\left(\frac{q^{b_{\underline{i},\underline{j}}}-p^{m_{\underline{i},\underline{j}}}e^{z_1-z_2}}{1-q^{b_{\underline{i},\underline{j}}}p^{m_{\underline{i},\underline{j}}}e^{z_1-z_2}}\right)Y_{\mathcal{E}}^{\phi}(y_{\underline{j}}^{-}(p^{-j}z),z_2)Y_{\mathcal{E}}^{\phi}(y_{\underline{i}}^{+}(p^{-i}z),z_1)\\
\label{eq:Yphiyij} =\ &\frac{\delta_{\underline{i},\underline{j}}}{q-q^{-1}}\left(z_1^{-1}\delta\left(\frac{z_2}{z_1}\right)-Y_{\mathcal{E}}^{\phi}(E(\psi_{\underline{i}}(p^{-j}z),z_2))z_1^{-1}\delta\left(\frac{z_2-2\ell\hbar}{z_1}\right)\right).
 \end{align}
By applying  the both sides of $\eqref{eq:Yphiyij}$ to $1_{W}$, and taking $\textrm{Sing}_{z_1}\textrm{Res}_{z_2}z_2^{-1}$, we find that
 \begin{align*}
\textrm{Sing}_{z_1}Y_{\mathcal{E}}^{\phi}(y_{\underline{i}}^{+}(p^{-i}z),z_1)y_{\underline{j}}^{-}(p^{-j}z)=\frac{\delta_{\underline{i},\underline{j}}}{q-q^{-1}}\left(1_{W}z_1^{-1}-E(\psi_{\underline{i}}(p^{-j}z)(z_1+2\ell\hbar)^{-1}\right).
 \end{align*}
 It then follows that
 \begin{align*}
 y_{\underline{i}}^{+}(p^{-i}z)_{0}^{\phi}y_{\underline{i}}^{-}(p^{-i}z)\ &=\ (q-q^{-1})^{-1}\left(1_{W}-E(\psi_{\underline{i}}(p^{-i}z)\right),\\
  y_{\underline{i}}^{+}(p^{-i}z)_{1}^{\phi}y_{\underline{i}}^{-}(p^{-i}z)\ &=\ 2\ell\hbar(q-q^{-1})^{-1}E(\psi_{\underline{i}}(p^{-i}z)).
 \end{align*}
 This, together with \eqref{eq:serrey} and \eqref{eq:Rell1}-\eqref{eq:Rell3}, shows that $\varphi$ factors through $V_{\widehat{\mathfrak{sl}}_{\infty},\hbar}(\ell,0)$.

 Note that
 \begin{align*}
 &\varphi\circ \rho_{n,\hbar}(h_{i,\hbar})=\psi_{\underline{i}}(p^{-i-nN}z)=\mathcal{R}_{nN}\circ \varphi(h_{i,\hbar}),\\
 &\varphi\circ \rho_{n,\hbar}(x_{i,\hbar}^{\pm})=y_{\underline{i}}(p^{-i-nN}z)=\mathcal{R}_{nN}\circ \varphi(x_{i,\hbar}^{\pm})
 \end{align*}
 for $n,i\in\Z$.
 This gives that   $\varphi$ is a  nonlocal $\Z$-vertex algebra homomorphism as  $\{h_{i,\hbar},x_{i,\hbar}^{\pm}\mid i\in\Z\}$ is a generating subset of $V_{\widehat{\mathfrak{sl}}_{\infty},\hbar}(\ell,0)$. Therefore, $W$ is a $(\Z,\chi_{p^N})$-equivariant $\phi$-coordinated quasi $V_{\widehat{\mathfrak{sl}}_{\infty},\hbar}(\ell,0)$-module such that
 \begin{align*}
 &Y_W^{\phi}(h_{i,\hbar},z_0)=Y_W^{\phi}(\varphi(h_{i,\hbar}),z_0)=Y_{W}^{\phi}(\psi_{\underline{i}}(p^{-i}z),z_0)=\psi_{\underline{i}}(p^{-i}z_0),\\
  &Y_W^{\phi}(x_{i,\hbar}^{\pm},z_0)=Y_W^{\phi}(\varphi(x_{i,\hbar}^{\pm}),z_0)=Y_{W}^{\phi}(y_{\underline{i}}^{\pm}(p^{-i}z),z_0)=y_{\underline{i}}^{\pm}(p^{-i}z_0),
 \end{align*}
 for $i\in\Z$.

On the other hand, let  $(W,Y_W^{\phi})$ be a $(\Z,\chi_{p^N})$-equivariant $\phi$-coordinated quasi $V_{\widehat{\mathfrak{sl}}_{\infty},\hbar}(\ell,0)$-module. By Propositions \ref{pr:modhh}-\ref{pr:modxxmp}, it remains to prove that
\begin{align*}
\left(\left(Y_W^{\phi}(x_{k,\hbar}^{\pm},p^kz)_{0}^{\phi}\right)^{1-b_{k,t}}Y_W^{\phi}(x_{t,\hbar}^{\pm},p^{m_{k,t}}p^tz)\right)=0
\end{align*}
for $0\leq k,t\leq N-1$ with $b_{k,t}\le 0$. Indeed, if $k=0$ and $t=N-1$, by \eqref{eq:Rell3}, \eqref{eq:modequicon}, and \eqref{eq:phitau}, we have that
\begin{align*}
&\left(\left(Y_W^{\phi}(x_{0,\hbar}^{\pm},z)_{0}^{\phi}\right)^2Y_W^{\phi}(x_{N-1,\hbar}^{\pm},p^{N}z)\right)\\
=\ &\left(\left(Y_W^{\phi}(x_{0,\hbar}^{\pm},z)_{0}^{\phi}\right)^2Y_W^{\phi}(x_{-1,\hbar}^{\pm},z)\right)\\
=\ &Y_W^{\phi}\left(\left(x_{0,\hbar}^{\pm}\right)_0^2x_{-1,\hbar}^{\pm},z\right)=0.
\end{align*}
The verifications of other cases are similar and we omit details. Then we complete the proof.
 \end{proof}


\subsection{The main result}

Let $W$ be a restricted $\mathcal{E}_N^f$-module of level $\ell$. For every  $0\leq i\leq N-1$, write
\begin{align}\label{hatx}
\hat{x}_{i}^{+}(z)=x_{i}^{+}(z),\quad \hat{x}_{i}^{-}(z)=x_{i}^{-}(q^{-\ell}z)\phi_{i}^{+}(q^{-\frac{1}{2}\ell}z)^{-1}
\end{align}
and
\begin{align}\label{hatphi}
\hat{\phi}_{i}(z)=F\left(z\frac{\partial}{\partial z}\right)^{-1}\mathrm{log}\frac{\phi_{i}^{+}(q^{\frac{1}{2}\ell}z)}{\phi_{i}^{-}(q^{-\frac{1}{2}\ell}z)}.
\end{align}
Then we have that:

\begin{lemt}\label{le:cateresmod}
The pair \[\left(W,(\hat{\phi}_{i}(z),\hat{x}_{i}^{\pm}(z))_{0\leq i\leq N-1}\right)\] is an object of $\mathcal{M}_{\ell}^{\phi}.$
\end{lemt}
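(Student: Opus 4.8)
The plan is to verify, for the pair $(\psi_i,y_i^{\pm})=(\hat{\phi}_i,\hat{x}_i^{\pm})$ of \eqref{hatx}--\eqref{hatphi}, the four families of relations \eqref{eq:psi}--\eqref{eq:y++} that define an object of $\mathcal{M}_{\ell}^{\phi}$, together with the memberships $\hat{\phi}_i(z),\hat{x}_i^{\pm}(z)\in\mathcal{E}_{\hbar}(W)$. The first step is to make $\hat{\phi}_i(z)$ explicit. Since $\phi_i^{\pm}(z_1)$ and $\phi_i^{\pm}(z_2)$ commute by (Q1), we have $\mathrm{log}\,\phi_i^{\pm}(w)=\pm\hbar\,h_i(0)\pm(q-q^{-1})\sum_{\pm n>0}h_i(n)w^{-n}$, and applying $F\left(z\frac{\partial}{\partial z}\right)^{-1}$, which is a well-defined and $\hbar$-adically continuous $\C[[\hbar]]$-linear operator on $\mathcal{E}_{\hbar}(W)$ (on $z^{-n}$ with $n\neq 0$ it acts as multiplication by $n/((q-q^{-1})[n]_q)\in\C[[\hbar]]$, while the only apparent pole $F(0)^{-1}$ is absorbed by the term $2\hbar\,h_i(0)$), one obtains
\[\hat{\phi}_i(z)=h_i(0)+\sum_{n\neq 0}\frac{n}{[n]_q}\,q^{-\frac{1}{2}\ell|n|}\,h_i(n)\,z^{-n}.\]
Consequently $\hat{\phi}_i(z)\in\mathcal{E}_{\hbar}(W)$ follows at once from restrictedness of $W$; $\hat{x}_i^{+}(z)=x_i^{+}(z)$ lies in $\mathcal{E}_{\hbar}(W)$ by hypothesis; and $\hat{x}_i^{-}(z)$ is the composition of $x_i^{-}(q^{-\ell}z)$ with $\phi_i^{+}(q^{-\frac{1}{2}\ell}z)^{-1}=q^{-h_i(0)}\mathrm{exp}\big(-(q-q^{-1})\sum_{n>0}h_i(n)q^{\frac{1}{2}\ell n}z^{-n}\big)$, and this composition again lies in $\mathcal{E}_{\hbar}(W)$ because the latter operator involves only the powers $z^{0},z^{-1},z^{-2},\dots$ and $W$ is restricted.

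For the commutator relations \eqref{eq:psi} and \eqref{eq:psiy} the idea is to pass from the ``group-like'' form of (Q1)--(Q4) to an additive, Lie-theoretic form. Writing $\phi_i^{+}(z_1)=q^{h_i(0)}e^{P_i(z_1)}$ and $\phi_j^{-}(z_2)=q^{-h_j(0)}e^{N_j(z_2)}$ and using that $[P_i(z_1),N_j(z_2)]$ is central, relation (Q2) takes the form $e^{P_i(z_1)}e^{N_j(z_2)}=e^{N_j(z_2)}e^{P_i(z_1)}e^{[P_i(z_1),N_j(z_2)]}$, where $[P_i(z_1),N_j(z_2)]$ equals the logarithm of the scalar factor in (Q2); comparing coefficients yields $[h_i(n),h_j(m)]=\delta_{n+m,0}\,c_{ij}(n)$ with the $c_{ij}(n)$ read off from $\mathrm{log}\,g_{ij}(z_1,q^{\pm\ell}p^{-m_{i,j}}z_2)$. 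Substituting this into the formula for $\hat{\phi}_i(z)$ above and separating the $\iota_{z_1,z_2}$ and $\iota_{z_2,z_1}$ expansions, one identifies $[\hat{\phi}_i(z_1),\hat{\phi}_j(z_2)]$ with the right-hand side of \eqref{eq:psi}: the difference operator appearing there is precisely the generating series, renormalized by the factors $q^{-\frac{1}{2}\ell|n|}$, of the brackets $c_{ij}(n)$. Likewise (Q3)--(Q4) give $\phi_i^{\pm}(z_1)x_j^{\pm}(z_2)=x_j^{\pm}(z_2)\phi_i^{\pm}(z_1)\cdot(\text{explicit scalar series in }z_2/z_1)$, so that $[\mathrm{log}\,\phi_i^{\pm}(z_1),x_j^{\pm}(z_2)]$ equals $x_j^{\pm}(z_2)$ times the logarithm of that scalar series; combining this with the extra term $[\hat{\phi}_i(z_1),\phi_j^{+}(q^{-\frac{1}{2}\ell}z_2)^{-1}]$ produced by the normalization built into $\hat{x}_j^{-}(z_2)$ (governed by the same (Q2)/(Q3) data) and applying $F\left(z\frac{\partial}{\partial z}\right)^{-1}$ yields \eqref{eq:psiy}.

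The quadratic relations are more direct. For \eqref{eq:y+-} I would push $\hat{x}_i^{+}(z_1)=x_i^{+}(z_1)$ to the right through $\hat{x}_j^{-}(z_2)=x_j^{-}(q^{-\ell}z_2)\phi_j^{+}(q^{-\frac{1}{2}\ell}z_2)^{-1}$: one crosses $x_j^{-}(q^{-\ell}z_2)$ by means of $(\mathrm{Q}5')$, whose prefactor after the substitution $z_2\mapsto q^{-\ell}z_2$ is exactly $(z_1-z_2)^{\delta_{i,j}}(z_1-q^{-2\ell}z_2)^{\delta_{i,j}}$, and crosses $\phi_j^{+}(q^{-\frac{1}{2}\ell}z_2)^{-1}$ by means of (Q3), which (using $b_{i,j}=b_{j,i}$ and the homogeneity $g_{ij}(az_1,az_2)=g_{ij}(z_1,z_2)$) produces precisely the factor $\iota_{z_2,z_1}g_{ij}(z_2,p^{m_{i,j}}z_1)$; this gives \eqref{eq:y+-} verbatim. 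For \eqref{eq:y++}, the $+$ case is a rearrangement of the identity \eqref{xsigma} of Lemma \ref{le:xiij} (with $m=2$ and $\sigma$ the transposition, using $C_{i,j}^{2}=1$), and the $-$ case follows by the same computation once the normalizing factors $\phi_i^{+}(q^{-\frac{1}{2}\ell}z)^{-1}$ have been commuted past the $x^{-}$'s via (Q3) and past one another via (Q1), the surviving scalar factors being absorbed into the $f_{i,j}^{\pm}$.

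The \emph{main obstacle}, I expect, is the second step above: turning (Q1)--(Q4) from their multiplicative, rational-function-valued form into the additive, difference-operator form required by \eqref{eq:psi}--\eqref{eq:psiy}. Concretely one must extract the Heisenberg brackets $[h_i(n),h_j(m)]$ from the (Q2)-factor while correctly tracking the two expansions $\iota_{z_1,z_2}$ and $\iota_{z_2,z_1}$ as well as the shifts $q^{\pm\ell}$ versus $q^{\mp\frac{1}{2}\ell}$, and then verify that $F\left(z\frac{\partial}{\partial z}\right)^{-1}$ intertwines with these shifts so as to deliver exactly the stated right-hand sides. This is the module-level counterpart of the identities of Lemma \ref{le:tau} under the substitution that replaces $e^{z}$ by the rational variable; the cleanest way to organize it is to record first, for $c=\ell$, the logarithmic and log-derivative forms of $g_{ij}(z_1,q^{\pm c}p^{-m_{i,j}}z_2)$ and of $g_{ij}(z_1,q^{\mp c/2}p^{-m_{i,j}}z_2)$, and to then feed these through the two constructions.
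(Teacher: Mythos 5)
Your proposal is correct and follows essentially the same route as the paper: the paper's proof likewise verifies \eqref{eq:y++} by commuting the normalizing factors $\phi_i^{+}(q^{-\frac{1}{2}\ell}z)^{-1}$ through via (Q1), (Q3) and the rearrangement identity \eqref{tildefxij} coming from (Q6), and then asserts that \eqref{eq:psi}, \eqref{eq:psiy}, \eqref{eq:y+-} follow analogously from (Q2), (Q2)--(Q4), and (Q3)+(Q5$'$) respectively, exactly the logarithmic/Heisenberg extraction you outline (which the paper carries out explicitly only in the converse direction, in the proof of Lemma \ref{le:Mphi}). One small slip: the eigenvalue of $F\left(z\frac{\partial}{\partial z}\right)^{-1}$ on $z^{-n}$ is $n/(q^{n}-q^{-n})$, which lies in $\hbar^{-1}\C[[\hbar]]$ rather than $\C[[\hbar]]$; this is harmless because the coefficients of $\mathrm{log}\,\phi_i^{\pm}$ carry a factor $q-q^{-1}$, so your displayed formula for $\hat{\phi}_i(z)$ and the membership $\hat{\phi}_i(z)\in\mathcal{E}_{\hbar}(W)$ are still correct.
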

\begin{proof}
Recall that  $\phi_{i}^{\pm}(z),x_{i}^{\pm}(z)\in\mathcal{E}_{\hbar}(W)$ for $0\leq i\leq N-1$.
Then we have  that  $\hat{\phi}_{i}(z),\hat{x}_{i}^{\pm}(z)\in\mathcal{E}_{\hbar}(W)$ by \eqref{hatx} and \eqref{hatphi}. In what follows we prove that the condition \eqref{eq:y++} holds on $W$.
From  (Q6), it follows that
\[p^{-m_{i,j}}\iota_{z_1,z_2}f_{i,j}^{+}(p^{m_{i,j}}z_1,z_2)\hat{x}_{i}^{+}(z_1)\hat{x}_{j}^{+}(z_2)-C_{i,j}\iota_{z_2,z_1}f_{j,i}^{+}(p^{m_{ji}}z_2,z_1)\hat{x}_{j}^{+}(z_2)\hat{x}_{i}^{+}(z_1)=0\]
and that
\begin{align}\label{eq:xi--}
\left((z_1-q^{-2}z_2)x_{i}^{-}(z_1)x_{i}^{-}(z_2)\right)|_{z_2=z_1}=0
\end{align}
for $0\leq i,j\leq N-1$.
Then by  (Q1), (Q3), (Q6), \eqref{hatphi} and \eqref{eq:xi--}, we have that
\begin{align*}
&p^{-m_{i,j}}\iota_{z_1,z_2}f_{i,j}^{+}(p^{m_{i,j}}z_1,z_2)\hat{x}_{i}^{-}(z_1)\hat{x}_{j}^{-}(z_2)-C_{i,j}\iota_{z_2,z_1}f_{j,i}^{+}(p^{m_{j,i}}z_2,z_1)\hat{x}_{j}^{-}(z_2)\hat{x}_{i}^{-}(z_1)\\
=\ &p^{-m_{i,j}}\iota_{z_1,z_2}f_{i,j}^{+}(p^{m_{i,j}}z_1,z_2)x_{i}^{-}(q^{-\ell}z_1)\phi_{i}^{+}(q^{-\frac{1}{2}\ell}z_1)^{-1}x_{j}^{-}(q^{-\ell}z_2)\phi_{j}^{+}(q^{-\frac{1}{2}\ell}z_2)^{-1}\\
&-C_{i,j}\iota_{z_2,z_1}f_{j,i}^{+}(p^{m_{j,i}}z_2,z_1)x_{j}^{-}(q^{-\ell}z_2)\phi_{j}^{+}(q^{-\frac{1}{2}\ell}z_2)^{-1}x_{i}^{-}(q^{-\ell}z_1)\phi_{i}^{+}(q^{-\frac{1}{2}\ell}z_1)^{-1}\\
=\ &q^{b_{i,j}}p^{m_{j,i}}\iota_{z_1,z_2}\left(f_{i,j}^{-}(p^{m_{i,j}}z_1,z_2)\right)x_{i}^{-}(q^{-\ell}z_1)x_{j}^{-}(q^{-\ell}z_2)\phi_{i}^{+}(q^{-\frac{1}{2}\ell}z_1)^{-1}\phi_{j}^{+}(q^{-\frac{1}{2}\ell}z_2)^{-1}\\
&-q^{b_{i,j}}C_{i,j}\iota_{z_2,z_1}\left(f_{i,j}^{-}(p^{m_{j,i}}z_2,z_1)\right)x_{j}^{-}(q^{-\ell}z_2)x_{i}^{-}(q^{-\ell}z_1)\phi_{j}^{+}(q^{-\frac{1}{2}\ell}z_2)^{-1}\phi_{i}^{+}(q^{-\frac{1}{2}\ell}z_1)^{-1}\\
=\ &\delta_{i,j}q^{b_{i,j}}z_2^{-1}\delta\left(\frac{z_2}{z_1}\right)\left((z_1-q^{-b_{i,j}}z_2)x_{i}^{-}(q^{-\ell}z_1)x_{j}^{-}(q^{-\ell}z_2)\right)\phi_{i}^{+}(q^{-\frac{1}{2}\ell}z_1)^{-1}\phi_{j}^{+}(q^{-\frac{1}{2}\ell}z_2)^{-1}\\
=\ &0,
\end{align*}
as desired. Similarly,  \eqref{eq:psi} follows from (Q2), \eqref{eq:psiy} follows from (Q2), (Q3), and (Q4), \eqref{eq:y+-} follows from (Q3) and (Q5$'$).  This completes the proof.
\end{proof}

Let $\left(W,(\psi_{i}(z),y_{i}^{\pm}(z))_{0\leq i\leq N-1}\right)$ be an object of $\mathcal{M}_{\ell}^{\phi}$. Define
\begin{align}\label{checkypm}
\check{y}_{i}^{+}(z)=y_{i}(z),\quad \check{y}_{i}^{-}(z)=y_{i}^{-}(q^{\ell}z)\check{\psi}_{i}^{+}(q^{\frac{1}{2}\ell}z)
\end{align}
and
\begin{align}
\label{checkpsi}\check{\psi}_{i}^{\pm}(z)=\mathrm{exp}\left( \pm F\left(z\frac{\partial}{\partial z}\right)\psi_{i}^{\pm}(q^{\mp\frac{1}{2}\ell}z)\right)
\end{align}
for $0\leq i\leq N-1$.  Here, for every $a(z)=\sum_{n\in\Z}a(n)z^{-n}\in \mathcal{E}_{\hbar}(W)$,
\[a^{\pm}(z)=\sum_{\pm n>0}a(n)z^{-n}+\frac{1}{2}a(0).\]
Then we have that:
\begin{lemt}\label{le:Mphi}
The $\C[[\hbar]]$-module $W$ is a restricted $\mathcal{E}_N^f$-module of level $\ell$ with
\begin{align}\label{modac}
\phi_{i}^{\pm}(z)=\check{\psi}_{i}^{\pm}(z)\quad\textrm{and}\quad x_{i}^{\pm}(z)=\check{y}_{i}^{\pm}(z)\quad \textrm{for } 0\leq i\leq N-1.
\end{align}
\end{lemt}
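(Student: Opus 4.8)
The plan is to verify directly that, with $c$ acting as the scalar $\ell$ and with the generating series defined by \eqref{modac}, the relations (Q1)--(Q4), (Q6) and (Q5$'$) of $\mathcal{E}_N^f$ hold as operator identities on $W$; granting this, the assignment sending $x_i^\pm(n)$ to the modes of $\check y_i^\pm(z)$, sending $h_i(n)$ to the modes determined by $\check\psi_i^\pm(z)$, and sending $c$ to $\ell$, extends to a continuous $\C[[\hbar]]$-algebra homomorphism $\mathcal{E}_N^f\to\mathrm{End}_{\C[[\hbar]]}(W)$, and since $W$ is topologically free with $\check\psi_i^\pm(z),\check y_i^\pm(z)\in\mathcal{E}_{\hbar}(W)$ (the operations in \eqref{checkypm}--\eqref{checkpsi} preserve $\mathcal{E}_{\hbar}(W)$) this module is restricted of level $\ell$. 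The guiding observation is that the substitutions $\psi_i\leftrightarrow\hat\phi_i$ and $y_i^\pm\leftrightarrow\hat x_i^\pm$ in \eqref{hatx}--\eqref{hatphi} and \eqref{checkypm}--\eqref{checkpsi} are mutually inverse: using $F(-n)=(q^n-q^{-n})/n$ for $n\neq0$ and $F(0)=2\hbar$ one checks that applying the formal inverse of $F(z\frac{\partial}{\partial z})$ sends the $\hat\phi_i,\hat x_i^\pm$ built from any genuine module back to $\phi_i^\pm,x_i^\pm$, so that each relation below is the corresponding step in the proof of Lemma \ref{le:cateresmod} read in reverse.

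First I would pin down the generators. Expanding the exponentials in \eqref{checkpsi} and matching with the required exponential shape of $\phi_i^\pm(z)$ shows that there are unique $h_i(n)\in\mathcal{E}_{\hbar}(W)$ $(n\in\Z)$ with
\[
\check\psi_i^\pm(z)=q^{\pm h_i(0)}\exp\Bigl(\pm(q-q^{-1})\sum_{\pm n>0}h_i(n)z^{-n}\Bigr),
\]
the $+$ and $-$ prescriptions being consistent because they constrain disjoint sets of nonzero modes together with the common zero mode $h_i(0)=\psi_i(0)$. Taking $x_i^\pm(n)$ to be the modes of $\check y_i^\pm(z)$ and $c=\ell$ then gives the candidate action, and the generating-function conventions for $\phi_i^\pm(z)$ and $x_i^\pm(z)$ are satisfied by construction.

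Next I would check the relations one family at a time. For (Q1) the statements involving $c$ are trivial, and $[\phi_i^\pm(z_1),\phi_j^\pm(z_2)]=0$ follows because the right-hand side of \eqref{eq:psi} is $(\iota_{z_1,z_2}-\iota_{z_2,z_1})$ applied to a rational function whose two $\iota$-expansions involve only strictly negative powers of one variable and strictly positive powers of the other; comparing mode-degrees forces $[\psi_i^+(z_1),\psi_j^+(z_2)]=[\psi_i^-(z_1),\psi_j^-(z_2)]=0$, hence the exponentials defining $\phi_i^\pm$ commute. For (Q2)--(Q4), $\phi_i^+(z)$ and $\phi_i^-(z)$ act by exponential conjugation, so the exchange relations reduce to the mixed mode-components of \eqref{eq:psi} and to \eqref{eq:psiy}; after the rescalings built into \eqref{checkypm}--\eqref{checkpsi}, a direct computation with $\exp$ and $\log$ (the inverse of the one in the proof of Lemma \ref{le:cateresmod}) identifies these with the factors $g_{ij}(z_1,q^{\pm c}p^{-m_{i,j}}z_2)^{\pm1}$ appearing in (Q2)--(Q4). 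For (Q6), the $x_i^+$-$x_j^+$ case is literally \eqref{eq:y++} since $\check y_i^+=y_i^+$; for the $x_i^-$-$x_j^-$ case one writes $\check y_i^-(z)=y_i^-(q^\ell z)\check\psi_i^+(q^{\ell/2}z)$, commutes the $\check\psi^+$ factors past the $y^-$'s using the exponentiated form of \eqref{eq:psiy}, commutes $\check\psi^+$ past $\check\psi^+$ using (Q1), and reassembles using \eqref{eq:y++} for $y^-$; keeping track of the $f^\pm$ and $g_{ij}$ factors reproduces, backwards, the chain of equalities displayed in the proof of Lemma \ref{le:cateresmod}.

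The main obstacle is relation (Q5$'$). Writing $[x_i^+(z_1),x_j^-(z_2)]=\bigl[y_i^+(z_1),\,y_j^-(q^\ell z_2)\check\psi_j^+(q^{\ell/2}z_2)\bigr]$ and moving $\check\psi_j^+(q^{\ell/2}z_2)$ to the right past $y_i^+(z_1)$ by means of the exponential of \eqref{eq:psiy}, one produces a scalar ``shift'' factor that a short computation identifies with the $\iota(g_{ij})$-factor occurring in \eqref{eq:y+-} after the substitution $z_2\mapsto q^\ell z_2$; the two cancel, leaving a bracket that is annihilated on multiplying by $(z_1-q^\ell z_2)^{\delta_{i,j}}(z_1-q^{-\ell}z_2)^{\delta_{i,j}}$, since under $z_2\mapsto q^\ell z_2$ the killing factor $(z_1-z_2)^{\delta_{i,j}}(z_1-q^{-2\ell}z_2)^{\delta_{i,j}}$ of \eqref{eq:y+-} becomes exactly $(z_1-q^\ell z_2)^{\delta_{i,j}}(z_1-q^{-\ell}z_2)^{\delta_{i,j}}$. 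The delicate point throughout is the bookkeeping of the expansion directions $\iota_{z_1,z_2}$ versus $\iota_{z_2,z_1}$ and the attendant delta-function identities. Once (Q1)--(Q4), (Q6) and (Q5$'$) are all established, the claimed $\mathcal{E}_N^f$-module structure on $W$ follows, and by the first paragraph it is restricted of level $\ell$ with action \eqref{modac}.
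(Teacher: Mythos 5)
Your proposal is correct and follows essentially the same route as the paper: reduce to verifying (Q1)--(Q4), (Q5$'$) and (Q6) as operator identities under the substitution \eqref{modac}, using that \eqref{checkypm}--\eqref{checkpsi} invert \eqref{hatx}--\eqref{hatphi} and that the exponential/logarithm computation converts \eqref{eq:psi}--\eqref{eq:y++} into the quantum-toroidal exchange relations. The paper's own proof only carries out the (Q2) check explicitly and omits the rest, so your sketches of (Q5$'$) and (Q6) supply detail the paper leaves out rather than diverging from it.
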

\begin{proof}
Note that $\psi_i(z),y_i^{\pm}(z)\in\mathcal{E}_{\hbar}(W)$ for $0\leq i\leq N-1$. This together with \eqref{checkypm} and \eqref{checkpsi} implies that $\check{\psi}_{i}^{\pm}(z),\check{y}_{i}^{\pm}(z)\in\mathcal{E}_{\hbar}(W)$.
Thus we only need to prove that the action \eqref{modac} preserves the relations (Q1)-(Q4), (Q5$'$), and (Q6).

We will only check the relation (Q2), while the verifications of other relations are omitted. By \eqref{checkpsi} and \eqref{eq:psi}, we have that
\begin{align*}
&\left[F\left(z_1\frac{\partial}{\partial z_1}\right)\psi_{i}^{+}(q^{-\frac{1}{2}\ell}z_1),-F\left(z_2\frac{\partial}{\partial z_2}\right)\psi_{j}^{-}(q^{\frac{1}{2}\ell}z_2)\right]\\
=\ &-F\left(z_1\frac{\partial}{\partial z_1}\right)F\left(z_2\frac{\partial}{\partial z_2}\right)[b_{i,j}]_{q^{z_2\frac{\partial}{\partial z_2}}}[\ell]_{q^{z_2\frac{\partial}{\partial z_2}}}\iota_{z_1,z_2}q^{-\ell z_2\frac{\partial}{\partial z_2}}\frac{p^{-m_{i,j}}q^{\ell}z_1z_2}{\left(z_1-p^{-m_{i,j}}q^{\ell}z_2\right)^2}\\
=\ & -F\left(z_1\frac{\partial}{\partial z_1}\right)F\left(z_2\frac{\partial}{\partial z_2}\right)\sum_{n\geq 0}n[b_{i,j}]_{q^n}[\ell]_{q^n}q^{-\ell n}(p^{-m_{i,j}}q^{\ell}z_2/z_1)^n\\
=\ &\sum_{n\geq 0}\frac{(q^{nb_{i,j}}-q^{-nb_{i,j}})(q^{-n\ell}-q^{n\ell})}{n}(p^{-m_{i,j}}z_2/z_1)^n\\
=\ &\iota_{z_1,z_2}\left(\textrm{log}\left(g_{ij}(z_1,q^{\ell}p^{-m_{i,j}}z_2)^{-1}g_{ij}(z_1,q^{-\ell}p^{-m_{i,j}}z_2)\right)\right).
\end{align*}
Then the relation (Q2) follows.
\end{proof}

Let $(W,(\psi_{i}(z),y_{i}^{\pm}(z))_{0\leq i\leq N-1})\in \mathcal{M}_{\ell}^{\phi}$. For $k\in\Z_{+}$ and $0\leq i_1,i_2,\dots,i_k\leq N-1$, set
\begin{align*}
y_{i_1,i_2,\dots,i_k}^{\pm}(z_1,z_2,\dots,z_k)=\left(\prod_{1\leq s<t\leq k}f_{i_s,i_t}^{+}(p^{m_{i_s,i_t}}z_s,z_t)\right)y_{i_1}^{\pm}(z_1)y_{i_2}^{\pm}(z_2)\cdots y_{i_k}^{\pm}(z_k).
\end{align*}
Note that (cf. Lemma \ref{le:xiij})
\begin{align*}
y_{i_1,i_2,\dots,i_k}^{\pm}(a_1z_1,a_2z_2,\dots,a_kz_k)\in\mathcal{E}_{\hbar}^{(k)}(W)\quad \text{for}\ a_1,a_2,\dots,a_k\in \C[[\hbar]].
\end{align*}

\begin{lemt}\label{le:Rphiserre}
\noindent (a) For $0\leq i\leq N-1$, we have that (see \eqref{eq:defE})
  \begin{align}\label{eq:Epsi}
  E(\psi_{i}(z))=\check{\psi}_{i}^{-}(q^{-\frac{3}{2}\ell} z_2)\check{\psi}_{i}^{+}(q^{-\frac{1}{2}\ell}z_2)^{-1}.
  \end{align}

\noindent (b) For $0\leq i,j\leq N-1$ with $b_{i,j}=-1$, we have that
  \begin{align}
  \left(\left(y_{i}^{\pm}(z)\right)_{0}^{\phi}\right)^2 y_{j}^{\pm}(p^{m_{i,j}}z)=0
  \end{align}
  if and only if
  \[y_{i,i,j}^{\pm}(qp^{-m_{i,j}}z,q^{-1}p^{-m_{i,j}}z,z)=0.\]

\noindent (c) For $0\leq i,j\leq N-1$ with $b_{i,j}=0$, we have that
\begin{align}
    \left(\left(y_{i}^{\pm}(z)\right)_{0}^{\phi}\right) y_{j}^{\pm}(p^{m_{i,j}}z)=0
\end{align}
if and only if
  \[y_{i,j}^{\pm}(z,z)=0.\]
 \end{lemt}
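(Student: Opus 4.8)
The plan is to establish the three parts by explicit (if somewhat intricate) computation inside the $\hbar$-adic nonlocal $\Z$-vertex algebra $\langle U_W\rangle_{\phi}$, using repeatedly the $\varprojlim$-formula defining $Y_{\mathcal E}^{\phi}$ together with the structure relations \eqref{eq:psi}--\eqref{eq:y++}.

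For part (a), the point is that $E(\psi_i(z))$, as defined in \eqref{eq:defE}, is built from the single ``Heisenberg'' element $v:=-q^{-\ell\partial}F(\partial)\psi_i(z)\in\langle U_W\rangle_{\phi}$ by forming the exponential vertex operator element $\exp\bigl((\,\cdot\,)_{-1}\bigr)\mathbf 1$. I would first record, using $Y_W^{\phi}(\partial u,z)=z\tfrac{\partial}{\partial z}Y_W^{\phi}(u,z)$ (as in \eqref{eq:phitau}) and the fact that $\partial$ acts as $z\tfrac{\partial}{\partial z}$ after the $\phi$-coordinate change, that $v$ corresponds to the field $w\mapsto-F\!\left(w\tfrac{\partial}{\partial w}\right)\psi_i(q^{-\ell}w)$ — here $q^{-\ell\partial}$ becomes the dilation $w\mapsto q^{-\ell}w$, which commutes with $F\!\left(w\tfrac{\partial}{\partial w}\right)$ — whose positive- and negative-frequency halves are exactly $-F\!\left(w\tfrac{\partial}{\partial w}\right)\psi_i^{\pm}(q^{-\ell}w)$. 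Applying the standard expansion $Y^{\phi}(\exp(v_{-1})\mathbf 1,z_0)=(\text{scalar})\cdot\exp(v^-)\exp(v^+)$ for exponentials of a Heisenberg element, one reads off the two exponential factors as $\check\psi_i^-(q^{-\frac32\ell}z)$ and $\check\psi_i^+(q^{-\frac12\ell}z)^{-1}$ (compare \eqref{checkpsi}), while the remaining ``self-contraction'' scalar $\exp\bigl(\tfrac12\langle v^+,v^-\rangle\bigr)$ is evaluated from the commutator \eqref{eq:psi} with $i=j$ together with the residue identity \eqref{eq:resfz}, producing the constant $\bigl(F(1+\ell)/F(1-\ell)\bigr)^{1/2}$. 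Equivalently, since $E(\psi_i(z))$ is the image under the homomorphism $\varphi\colon F_{\tau}(A,\ell)\to\langle U_W\rangle_{\phi}$ of Proposition \ref{pr:vglRphi} of the element $E(h_{i,\hbar})$, one may instead cite the corresponding identity for $E(h_{i,\hbar})$ (cf.\ the proof of \eqref{eq:singxmp} via \cite[Proposition 6.10]{K}).

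For parts (b) and (c), I would compute the iterated $\phi$-products $\bigl((y_i^{\pm}(z))_0^{\phi}\bigr)^{1-b_{i,j}}y_j^{\pm}(p^{m_{i,j}}z)$ directly from the $\varprojlim$-formula for $Y_{\mathcal E}^{\phi}$, using as compatibility polynomials the powers $(z_1-p^{-m_{i,j}}z_2)^n$ — which lie in the relevant $\C_{\chi_p(\Z)}$-monoid and kill the appropriate $\pi_n$-truncations by virtue of \eqref{eq:y++} — together with the definition of the multivariable fields $y^{\pm}_{i,\dots,i,j}$. Each application of $(y_i^{\pm}(z))_0^{\phi}$ amounts to substituting $z_1=ze^{z_0}$ into such a product, dividing by $f_n(ze^{z_0},z)$, and taking $\mathrm{Res}_{z_0}$; in the $\hbar$-adic setting the ``$q$-pole'' contributed by the factor $p^{m}z_1-q^{\pm b}z_2$ in $f^{+}$ (a pole at $e^{z_0}=q^{\mp1}$, i.e.\ at $z_0=\mp\hbar$, infinitesimally near $0$) contributes to this residue and converts the $0$-th $\phi$-mode into an honest evaluation of the multivariable field. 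Carried out once in the case $b_{i,j}=0$ (where $m_{i,j}=0$, so $z_1-z$ already works), this gives $(y_i^{\pm}(z))_0^{\phi}y_j^{\pm}(p^{m_{i,j}}z)=z^{-1}y_{i,j}^{\pm}(z,z)$, whence (c). Carried out twice in the case $b_{i,j}=-1$, it gives $\bigl((y_i^{\pm}(z))_0^{\phi}\bigr)^2 y_j^{\pm}(p^{m_{i,j}}z)=c\,z^{-2}\,y^{\pm}_{i,i,j}\bigl(qp^{-m_{i,j}}z,\ q^{-1}p^{-m_{i,j}}z,\ z\bigr)$ for some invertible $c\in\C[[\hbar]]$ — the two substitutions localizing at $e^{z_0}=q^{\pm1}$, which accounts for the arguments $qp^{-m_{i,j}}z$ and $q^{-1}p^{-m_{i,j}}z$ — and since $c$ is invertible, (b) follows. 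This computation runs exactly parallel to the one in the proof of Proposition \ref{pr:rescon} matching (Q7) with \eqref{serre}/\eqref{serre-1}; it can also be organized via Lemma \ref{le:Yephi} and \cite[Theorem 5.17]{CJKT}.

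The main obstacle is the bookkeeping, concentrated in part (a): one must reconcile the half-integer shifts $q^{\mp\ell/2}$ built into $\check\psi_i^{\pm}$ with the operator $q^{-\ell\partial}$ of \eqref{eq:defE}, and one must extract the self-contraction scalar precisely, so that the factor $\bigl(F(1+\ell)/F(1-\ell)\bigr)^{1/2}$ emerges exactly from \eqref{eq:psi} and \eqref{eq:resfz}. For (b)--(c) the delicate point is that the substitution $z_1=ze^{z_0}$ is legitimate only after multiplying by the compatibility polynomial (so the series is lower truncated in $z_1$), which in turn forces the correct $z_0$-expansion of $f_n(ze^{z_0},z)^{-1}$; choosing the wrong expansion collapses the residue to $0$. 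Once these points are handled, the remaining steps are routine.
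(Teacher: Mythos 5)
Your proposal is correct and follows essentially the same route as the paper: parts (b) and (c) are proved there by exactly the residue computation you describe — multiply by the compatibility factor coming from \eqref{eq:y++}, substitute $z_1=z_2e^{z_0}$, and take $\mathrm{Res}_{z_0}$ against $(e^{z_0}-q^{\mp 1})^{-1}$ so that each $0$-th $\phi$-mode becomes an evaluation of the multivariable field at a $q$-shifted point, with only invertible prefactors appearing. For part (a) the paper simply cites \cite[Lemma 8.7]{K}, which is the second of the two routes you offer; your direct normal-ordering sketch is a reasonable unpacking of that citation.
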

 \begin{proof}
 The equality \eqref{eq:Epsi} follows from \cite[Lemma 8.7]{K}. For the  assertion (b), recall that $(\langle U_W\rangle_{\phi}, \mathcal{R}_{N})$ is an $\hbar$-adic nonlocal $\Z$-vertex algebra and $W$ is a faithful $(\Z,\chi_{p^N})$-equivariant $\phi$-coordinated quasi $\langle U_W\rangle_{\phi}$-module.
 By using \eqref{eq:y++}, we have that
 \begin{align*}
 (z_1-q^{b_{i,j}}z_2)Y_W^{\phi}(y_{i}^{\pm}(z),z_1)Y_W^{\phi}(y_{j}^{\pm}(p^{m_{i,j}}z),z_2)\in\mathcal{E}_{\hbar}^{(2)}(W).
 \end{align*}
 It then follows that
 \begin{align*}
 &y_{i,j}^{\pm}(z_2e^{z_0},p^{m_{i,j}}z_2)\\
 =\ &\left((z_1-q^{-1}z_2)Y_W^{\phi}(y_{i}^{\pm}(z),z_1)Y_W^{\phi}(y_{j}^{\pm}(p^{m_{i,j}}z),z_2)\right)|_{z_1=z_2e^{z_0}}\\
=\ & (e^{z_0}z_2-q^{-1}z_2)Y_W^{\phi}(Y_{\mathcal{E}}^{\phi}(y_{i}^{\pm}(z),z_0)y_{j}^{\pm}(p^{m_{i,j}}z),z_2)\\
=\ &(e^{z_0}z_2-q^{-1}z_2)Y_{\mathcal{E}}^{\phi}(y_{i}^{\pm}(z_2),z_0)y_{j}^{\pm}(p^{m_{i,j}}z_2).
 \end{align*}

 By multiplying $z_2^{-1}(e^{z_0}-q^{-1})^{-1}$ and taking $\textrm{Res}_{z_0}$, we get that
 \begin{align*}
 &Y_W^{\phi}\left(y_{i}^{\pm}(z)_{0}^{\phi}y_{j}^{\pm}(p^{m_{i,j}}z),z_2\right)\\
  =\ &\textrm{Res}_{z_0}z_2^{-1}(e^{z_0}-q^{-1})^{-1}y_{i,j}^{\pm}(z_2e^{z_0},p^{m_{i,j}}z_2)\\
  =\ &z_2^{-1}y_{i,j}^{\pm}(q^{-1}z_2,p^{m_{i,j}}z_2).
 \end{align*}
Furthermore, we have that
 \begin{align*}
&f_{ii}^{+}(z_1,q^{-1}z_2)f_{ij}^{+}(p^{m_{i,j}}z_1,p^{m_{i,j}}z_2)Y_W^{\phi}\left(y_{i,q}^{\pm}(z),z_1\right)Y_W^{\phi}\left(y_{i}^{\pm}(z)_{0}^{\phi}y_{j}^{\pm}(p^{m_{i,j}}z),z_2\right)\\
 =\ &y_{i,i,j}^{\pm}(z_1,q^{-1}z_2,p^{m_{i,j}}z_2)\in\mathcal{E}_{\hbar}^{(2)}(W).
 \end{align*}
 This implies that
 \begin{align*}
  &y_{i,i,j}^{\pm}(z_2e^{z_0},q^{-1}z_2,p^{m_{i,j}}z_2)\\
 =\ &\left((z_1-qz_2)Y_W^{\phi}(y_{i}^{\pm}(z),z_1)Y_W^{\phi}\left(y_{i}^{\pm}(z)_{0}^{\phi}y_{j}^{\pm}(p^{m_{i,j}}z),z_2\right)\right)|_{z_1=z_2e^{z_0}}\\
=\ &z_2(e^{z_0}-q)Y_W^{\phi}\left(Y_{\mathcal{E}}^{\phi}\left(y_{i,q}^{\pm}(z),z_0\right)\left(y_{i}^{\pm}(z)_{0}^{\phi}y_{j}^{\pm}(p^{m_{i,j}}z)\right),z_2\right)\\
=\ & z_2(e^{z_0}-q)Y_{\mathcal{E}}^{\phi}\left(y_{i}^{\pm}(z_2),z_0)\right)\left(y_{i}^{\pm}(z_2)_{0}^{\phi}y_{j}^{\pm}(p^{m_{i,j}}z_2)\right).
 \end{align*}
Then the assertion follows from  the following equality:
\begin{align*}
&\left(\left(y_{i}^{\pm}(z_2)\right)_{0}^{\phi}\right)^2 y_{j}^{\pm}(p^{-m_{ij}}z_2)\\
=\ &Y_W^{\phi}\left(\left(\left(y_{i}^{\pm}(z)\right)_{0}^{\phi}\right)^2 y_{j}^{\pm}(p^{-m_{ij}}z),z_2\right)\\
=\ &\mathrm{Res}_{z_0}z_2^{-1}(e^{z_0}-q)^{-1}y_{i,i,j}^{\pm}(z_2e^{z_0},q^{-1}z_2,p^{m_{ij}}z_2)\\
=\ &z_2^{-1}y_{i,i,j}^{\pm}(z_2q,q^{-1}z_2,p^{m_{ij}}z_2).
\end{align*}
Finally, the assertion (c) is obvious.
 \end{proof}

The following result gives an isomorphism between the category of restricted $\mathcal{E}_N$-modules of level $\ell$ and the category $\mathcal{R}_{\ell}^{\phi}$.
\begin{prpt}\label{pr:rescate}
Let $W$ be a restricted $\mathcal{E}_N$-module of level $\ell$. Then
\[\left(W,(\hat{\phi}_{i}(z),\hat{x}_{i}^{\pm}(z))_{0\leq i\leq N-1}\right)\] is an object of $\mathcal{R}_{\ell}^{\phi}$. On the other hand, let $\left(W,(\psi_{i}(z),y_{i}^{\pm}(z))_{0\leq i\leq N-1}\right)$ be an object of $\mathcal{R}_{\ell}^{\phi}$. Then $W$ is a restricted $\mathcal{E}_N$-module of level $\ell$ with the module action determined by
\begin{align*}
\phi_{i}^{\pm}(z)=\check{\psi}_{i}^{\pm}(z)\quad\textrm{and}\quad x_{i}^{\pm}(z)=\check{y}_{i}^{\pm}(z)\quad \textrm{for } 0\leq i\leq N-1.
\end{align*}
\end{prpt}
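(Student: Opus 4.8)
The plan is to build the equivalence from the lemmas already established together with Proposition~\ref{pr:rescon}, which jointly contain all of the substantive content; what remains is to assemble them and to carry out the routine rewriting that moves the currents $\phi_i^{\pm}(z)$ past $x_j^{\pm}(z)$ by means of (Q1), (Q3) and (Q4). A morphism in either category is merely a $\C[[\hbar]]$-module map intertwining the relevant fields, so the two assignments $W\mapsto\bigl(W,(\hat\phi_i,\hat x_i^{\pm})\bigr)$ and $\bigl(W,(\psi_i,y_i^{\pm})\bigr)\mapsto W$ are automatically functorial; and since the field-level operations in \eqref{hatx}--\eqref{hatphi} and \eqref{checkypm}--\eqref{checkpsi} are mutually inverse — applying the check-construction to $(\hat\phi_i,\hat x_i^{\pm})$ returns $(\phi_i^{\pm},x_i^{\pm})$ and conversely, by a direct computation with the generating series as in \cite{K} — proving the two directions below yields the asserted isomorphism of categories.

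\textbf{First direction.} Let $W$ be a restricted $\mathcal{E}_N$-module of level $\ell$; in particular it is a restricted $\mathcal{E}_N^f$-module of level $\ell$, so by Lemma~\ref{le:cateresmod} the pair $\bigl(W,(\hat\phi_i,\hat x_i^{\pm})\bigr)$ lies in $\mathcal{M}_\ell^\phi$, and it suffices to verify the two further relations \eqref{eq:y+-'} and \eqref{eq:serrey} defining $\mathcal{R}_\ell^\phi$. For \eqref{eq:serrey} with the upper sign one has $\hat x_i^{+}=x_i^{+}$, whence $y_{i,i,j}^{+}$ (resp.\ $y_{i,j}^{+}$) coincides with $x_{i,i,j}^{+}$ (resp.\ $x_{i,j}^{+}$); by Lemma~\ref{le:Rphiserre}(b),(c) relation \eqref{eq:serrey} is then equivalent to \eqref{serre}--\eqref{serre-1}, which hold by Proposition~\ref{pr:rescon}. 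With the lower sign, substituting $\hat x_i^{-}(z)=x_i^{-}(q^{-\ell}z)\phi_i^{+}(q^{-\frac{1}{2}\ell}z)^{-1}$ and pushing every $\phi^{+}$-factor to the far right of the ordered products by (Q1) and (Q3) — exactly as in the proof of Lemma~\ref{le:cateresmod} — one finds that the relevant specialization of $y_{i,i,j}^{-}$ (resp.\ $y_{i,j}^{-}$) vanishes precisely when that of $x_{i,i,j}^{-}$ (resp.\ $x_{i,j}^{-}$) does, so Proposition~\ref{pr:rescon} again applies. For \eqref{eq:y+-'} one inserts \eqref{hatx}, moves $\phi_j^{+}(q^{-\frac{1}{2}\ell}z_2)^{-1}$ past $x_i^{+}(z_1)$ by (Q3) — which, since $m_{ij}=-m_{ji}$ and $b_{ij}=b_{ji}$, produces exactly the factor $\iota_{z_2,z_1}g_{ij}(z_2,p^{m_{i,j}}z_1)$ — and then applies (Q5); the resulting right-hand side equals $\frac{\delta_{i,j}}{q-q^{-1}}\bigl(\delta(z_2/z_1)-\phi_i^{-}(q^{-\frac{3}{2}\ell}z_2)\phi_i^{+}(q^{-\frac{1}{2}\ell}z_2)^{-1}\delta(q^{-2\ell}z_2/z_1)\bigr)$, and by Lemma~\ref{le:Rphiserre}(a) together with the mutual-inverse property its coefficient is $E(\hat\phi_i(z_2))$, giving \eqref{eq:y+-'}.

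\textbf{Second direction.} Let $\bigl(W,(\psi_i,y_i^{\pm})\bigr)\in\mathcal{R}_\ell^\phi\subseteq\mathcal{M}_\ell^\phi$. By Lemma~\ref{le:Mphi}, $W$ is a restricted $\mathcal{E}_N^f$-module of level $\ell$ with $\phi_i^{\pm}(z)=\check\psi_i^{\pm}(z)$ and $x_i^{\pm}(z)=\check y_i^{\pm}(z)$, so by Proposition~\ref{pr:rescon} it remains to check \eqref{xij+-}, \eqref{serre} and \eqref{serre-1} for these fields. Relation \eqref{xij+-} is \eqref{eq:y+-'} read in reverse: one substitutes $x_i^{+}=y_i^{+}$ and $x_j^{-}(z)=y_j^{-}(q^{\ell}z)\check\psi_j^{+}(q^{\frac{1}{2}\ell}z)$, rearranges via (Q3) and (Q4) (available since $W$ is already an $\mathcal{E}_N^f$-module), and rewrites $E(\psi_i(z))$ by Lemma~\ref{le:Rphiserre}(a). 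Relations \eqref{serre} and \eqref{serre-1} follow from \eqref{eq:serrey}: for the upper sign $x_i^{+}=y_i^{+}$ gives them directly, and for the lower sign one pushes the $\phi^{+}$-factors out of $y_{i,i,j}^{-}$ and $y_{i,j}^{-}$ as in the first direction and applies Lemma~\ref{le:Rphiserre}(b),(c). Since the two constructions undo one another, this completes the proof of the isomorphism.

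\textbf{Main obstacle.} The real work lies in the normal-ordering bookkeeping in the two $x^{+}x^{-}$ computations and the two $x^{-}$-Serre computations: one must track the $q$- and $p$-power rescalings of the spectral parameters introduced by \eqref{hatx}--\eqref{hatphi}, by \eqref{checkypm}--\eqref{checkpsi} and by the relations (Q3), (Q4), and in particular confirm both that conjugating $x^{+}$ by $\phi^{+}$ yields exactly the factor $\iota_{z_2,z_1}g_{ij}(z_2,p^{m_{i,j}}z_1)$ appearing in \eqref{eq:y+-'} and that the exponential \eqref{eq:defE} defining $E(\psi_i(z))$ collapses to the product $\check\psi_i^{-}(q^{-\frac{3}{2}\ell}z)\check\psi_i^{+}(q^{-\frac{1}{2}\ell}z)^{-1}$ of half-currents; since the latter is Lemma~\ref{le:Rphiserre}(a), the genuinely new input is only the elementary but error-prone reordering.
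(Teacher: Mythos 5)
Your proposal is correct and follows the paper's own route: both directions are obtained by combining Lemma \ref{le:cateresmod} (resp.\ Lemma \ref{le:Mphi}) with Proposition \ref{pr:rescon} and Lemma \ref{le:Rphiserre}, transporting the relations \eqref{xij+-}, \eqref{serre}, \eqref{serre-1} into \eqref{eq:y+-'}, \eqref{eq:serrey} via the hat/check substitutions and (Q1)--(Q5). The only difference is that you spell out the normal-ordering computations (the $g_{ij}$ factor from conjugating by $\phi_j^{+}$ and the identification of $E(\hat\phi_i)$ with $\phi_i^{-}(q^{-3\ell/2}z)\phi_i^{+}(q^{-\ell/2}z)^{-1}$) that the paper leaves implicit, and these check out.
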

\begin{proof}
Let  $W$ be a restricted $\mathcal{E}_{N}$-module of level $\ell$. It  follows from Lemma \ref{le:cateresmod} that $(W,(\hat{\phi}_{i}(z),\hat{x}_{i}^{\pm}(z))_{0\leq i\leq N-1})\in \mathcal{M}_{\ell}^{\phi}$. This, together with Proposition \ref{pr:rescon} and Lemma \ref{le:Rphiserre}, shows that $(W,(\hat{\phi}_{i}(z),\hat{x}_{i}^{\pm}(z))_{0\leq i\leq N-1})\in \mathcal{R}_{\ell}^{\phi}$.

Conversely, let $\left(W,(\psi_{i}(z),y_{i}^{\pm}(z))_{0\leq i\leq N-1}\right)\in \mathcal{R}_{\ell}^{\phi}$. By Lemma \ref{le:Mphi}, we see that
$W$ is naturally a restricted $\mathcal{E}_N$-module of level $\ell$. This, together with  Proposition \ref{pr:rescon} and Lemma \ref{le:Rphiserre},  shows that
$W$ is a restricted $\mathcal{U}_{q,p}(\doublehat{\mathfrak{sl}}_{N})$-module of level $\ell$.
\end{proof}

Now we obtain the main result of this paper:

\begin{thm}\label{thm:main}
Let $W$ be a restricted $\mathcal{E}_N$-module of level $\ell$. Then $W$ is a $(\Z,\chi_{p^N})$-equivariant $\phi$-coordinated quasi $V_{\widehat{\mathfrak{sl}}_{\infty},\hbar}(\ell,0)$-module such that
\[Y_W^{\phi}(h_{i,\hbar},z)=\hat{\phi}_{\underline{i}}(p^{-i}z)\quad\textrm{and}\quad  Y_W^{\phi}(x_{i,\hbar}^{\pm},z)=\hat{x}_{\underline{i}}^{\pm}(p^{-i}z)\quad\textrm{for }i\in\Z.\]
On the other hand, let $(W,Y_W^{\phi})$ be a $(\Z,\chi_{p^N})$-equivariant $\phi$-coordinated quasi $V_{\widehat{\mathfrak{sl}}_{\infty},\hbar}(\ell,0)$-module. Then $W$ is a restricted $\mathcal{E}_N$-module of level $\ell$ such that
\[\hat{\phi}_{j}(z)=Y_W^{\phi}(h_{j,\hbar},p^jz)\quad\textrm{and}\quad
\hat{x}_{j}^{\pm}(z)=Y_W^{\phi}(x_{j,\hbar}^{\pm},p^jz)\quad\textrm{for }0\leq j\leq N-1.\]
\end{thm}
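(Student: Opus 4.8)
The plan is to obtain Theorem~\ref{thm:main} as the composite of the two category isomorphisms already established: Proposition~\ref{pr:rescate}, which identifies restricted $\mathcal{E}_N$-modules of level $\ell$ with objects of $\mathcal{R}_\ell^\phi$, and Proposition~\ref{pr:vglRphi}, which identifies objects of $\mathcal{R}_\ell^\phi$ with $(\Z,\chi_{p^N})$-equivariant $\phi$-coordinated quasi $V_{\widehat{\mathfrak{sl}}_\infty,\hbar}(\ell,0)$-modules. So the theorem is essentially a bookkeeping statement tracking the module actions through these two functors; the only real work is to verify that the formulas for $Y_W^\phi$ in terms of $\hat\phi_{\underline i}$ and $\hat x_{\underline i}^\pm$ match what the composition produces.

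First I would take a restricted $\mathcal{E}_N$-module $W$ of level $\ell$. By Proposition~\ref{pr:rescate}, $(W,(\hat\phi_i(z),\hat x_i^\pm(z))_{0\le i\le N-1})$ is an object of $\mathcal{R}_\ell^\phi$, where $\hat x_i^\pm$ and $\hat\phi_i$ are defined in \eqref{hatx}, \eqref{hatphi}. Feeding this object into the first assertion of Proposition~\ref{pr:vglRphi} (with $\psi_i = \hat\phi_i$ and $y_i^\pm = \hat x_i^\pm$), we conclude that $W$ carries a $(\Z,\chi_{p^N})$-equivariant $\phi$-coordinated quasi $V_{\widehat{\mathfrak{sl}}_\infty,\hbar}(\ell,0)$-module structure with
\[
Y_W^\phi(h_{i,\hbar},z)=\hat\phi_{\underline i}(p^{-i}z),\qquad Y_W^\phi(x_{i,\hbar}^\pm,z)=\hat x_{\underline i}^\pm(p^{-i}z)\quad\text{for }i\in\Z,
\]
which is exactly the asserted formula. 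Here I would simply note that $\hat\phi_{\underline i}$ and $\hat x_{\underline i}^\pm$ depend only on the residue $\underline i$ of $i$ modulo $N$, since $\mathcal{E}_N$ has generators indexed by $0\le i\le N-1$; this is what makes the substitution $i\mapsto\underline i$, $p^{-i}z$ legitimate.

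For the converse direction, start with a $(\Z,\chi_{p^N})$-equivariant $\phi$-coordinated quasi $V_{\widehat{\mathfrak{sl}}_\infty,\hbar}(\ell,0)$-module $(W,Y_W^\phi)$. The second assertion of Proposition~\ref{pr:vglRphi} gives that $\left(W,(Y_W^\phi(h_{k,\hbar},p^kz),Y_W^\phi(x_{k,\hbar}^\pm,p^kz))_{0\le k\le N-1}\right)$ is an object of $\mathcal{R}_\ell^\phi$. Applying the converse part of Proposition~\ref{pr:rescate} to this object, with $\psi_k(z)=Y_W^\phi(h_{k,\hbar},p^kz)$ and $y_k^\pm(z)=Y_W^\phi(x_{k,\hbar}^\pm,p^kz)$, yields a restricted $\mathcal{E}_N$-module structure of level $\ell$ on $W$ with $\phi_k^\pm(z)=\check\psi_k^\pm(z)$ and $x_k^\pm(z)=\check y_k^\pm(z)$ for $0\le k\le N-1$. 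The remaining point is to translate this into the stated identities $\hat\phi_j(z)=Y_W^\phi(h_{j,\hbar},p^jz)$ and $\hat x_j^\pm(z)=Y_W^\phi(x_{j,\hbar}^\pm,p^jz)$. For this I would invoke the fact, recorded in the proofs of Lemmas~\ref{le:cateresmod} and \ref{le:Mphi} (and \cite[Lemma 8.7, Lemma 8.8]{K}), that the assignments $(\psi,y)\mapsto(\check\psi,\check y)$ of \eqref{checkypm}--\eqref{checkpsi} and $(\phi,x)\mapsto(\hat\phi,\hat x)$ of \eqref{hatx}--\eqref{hatphi} are mutually inverse; hence $\hat\phi_j(z) = \hat{(\check\psi_j)}(z) = \psi_j(z) = Y_W^\phi(h_{j,\hbar},p^jz)$ and similarly for $\hat x_j^\pm$. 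Finally, one checks (immediately from the definitions of the two functors and of morphisms in $\mathcal{M}_\ell^\phi$) that these constructions are functorial and mutually inverse, so the two categories are isomorphic, not merely in bijection on objects.

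The step I expect to be the main obstacle — though it is more of a verification than a genuine difficulty — is confirming that the change-of-variables maps $(\phi_i^\pm,x_i^\pm)\leftrightarrow(\hat\phi_i,\hat x_i^\pm)$ and $(\psi_i,y_i^\pm)\leftrightarrow(\check\psi_i^\pm,\check y_i^\pm)$ are genuinely inverse to one another, i.e.\ that $\check{\hat\phi}{}_i^\pm = \phi_i^\pm$, $\check{\hat x}{}_i^\pm = x_i^\pm$, and vice versa. This hinges on the identity
\[
\check\psi_i^\pm(z)=\mathrm{exp}\!\left(\pm F\!\left(z\tfrac{\partial}{\partial z}\right)\psi_i^\pm(q^{\mp\frac12\ell}z)\right),\qquad \hat\phi_i(z)=F\!\left(z\tfrac{\partial}{\partial z}\right)^{-1}\!\mathrm{log}\frac{\phi_i^+(q^{\frac12\ell}z)}{\phi_i^-(q^{-\frac12\ell}z)},
\]
together with the relation $\phi_i^\pm(z)=q^{\pm h_i(0)}\mathrm{exp}(\pm(q-q^{-1})\sum_{\pm n>0}h_i(n)z^{-n})$ and the power-series expansion $F(z)=(q^z-q^{-z})/z$; the verification is a direct but somewhat delicate manipulation of $q^{\mp\frac12\ell z\partial/\partial z}$-shifts, carried out in \cite{K} in the quantum affine case and applying verbatim here. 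Once this identity and the dependence on residues mod $N$ are in hand, the theorem follows formally from Propositions~\ref{pr:vglRphi} and \ref{pr:rescate}.
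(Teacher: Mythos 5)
Your proposal is correct and follows exactly the paper's own route: the paper proves Theorem \ref{thm:main} by directly composing Propositions \ref{pr:vglRphi} and \ref{pr:rescate}, which is precisely your argument. The additional care you take in checking that the hat/check assignments are mutually inverse is a reasonable elaboration of what the paper leaves implicit.
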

\begin{proof}
  The assertion follows immediately from Propositions \ref{pr:vglRphi} and \ref{pr:rescate}.
\end{proof}


\begin{thebibliography}{AAGBP}
\bibitem[AKOS]{AKOS}
H. Awata, H. Kubo, S. Odake, and J. Shiraishi, {\em Quantum $\mathcal{W}_N$ algebras and Macdonald polynomials}, Commun. Math. Phys., {\bf 179} (1996), 401–416.
\bibitem[B]{B}
J. Beck, {\em Braid group action and quantum affine algebras}, Commun. Math. Phys., {\bf 165} (1994), 555-568.

\bibitem[BJK]{BJK-qva-BCD}
M.~Butorac, N.~Jing, and S.~Ko\v{z}i\'{c}, {\em $\hbar$-adic quantum vertex algebras associated with rational
  ${R}$-matrix in types ${B}$, ${C}$ and ${D}$},
 Lett. Math. Phys., {\bf 109} (2019), 2439-2471.

\bibitem[BS]{BS} I. Burban and O. Schiffmann, {\em On the Hall algebra of an elliptic curve, I,} Duke Math. J. {\bf 161} (2012), 1171–1231.

\bibitem[CJKT]{CJKT}
F. Chen, N. Jing, F. Kong, and S. Tan,  {\em Twisted quantum affinization and quantization of extended
affine Lie algebras}, Trans. Amer. Math. Soc., {\bf 376} (2023), 969-1039.
\bibitem[D]{D}
V. Drinfeld, {\em  A new realization of Yangians and quantized affine algebras}, Soviet Math.
Dokl., {\bf 36} (1988), 212-216.

 \bibitem[DF]{DF-qaff-RTT-Dr} J.~Ding and I.~Frenkel. {\em Isomorphism of two realizations of quantum affine algebra $\mathcal{U}_q(\widehat{\mathfrak{gl}(n)})$},
 Commun. Math. Phys., {\bf 156} (1993), 277-300.

\bibitem[DLM]{DLM}
 C. Dong, H. Li, and G. Mason, {\em Vertex Lie algebras, vertex poisson algebras and vertex algebras}, Contemp. Math., {\bf 297} (2002), 69–96.

\bibitem[EK]{EK}
P.~Etingof and D.~Kazhdan, {\em Quantization of {L}ie bialgebras, {P}art {V}: {Q}uantum vertex
  operator algebras},
Selecta Math., {\bf 6} (2000), 105.
\bibitem[FFJMM]{FFJMM}
B. Feigin, E. Feigin, M. Jimbo, T. Miwa, and E. Mukhin, {\em Quantum continuous $\mathfrak{gl}_{\infty}$: Tensor
products of Fock modules and $\mathcal{W}_n$-characters}, Kyoto J. Math., {\bf 51} (2011), 365–392.
\bibitem[FJM]{FJM}
B. Feigin, M. Jimbo, and E. Mukhin, {\em The $(\mathfrak{gl}_n,\mathfrak{gl}_m)$ duality in the quantum toroidal setting}, Commun. Math. Phys., {\bf 367} (2019), 455-481.
\bibitem[FJMM1]{FJMM1}
B. Feigin, M. Jimbo, T. Miwa, and E. Mukhin, {\em Quantum continuous $\mathfrak{gl}_{\infty}$: Semi-infinite construction of representations, Kyoto J. Math.}, {\bf 51} (2011), 337–364.
\bibitem[FJMM2]{FJMM2}
B. Feigin, M. Jimbo, T. Miwa, and E. Mukhin, {\em Quantum toroidal $\mathfrak{gl}_1$-algebra: Plane partitions}, Kyoto J. Math., {\bf 52} (2012), 621–659.
\bibitem[FJMM3]{FJMM3}
B. Feigin, M. Jimbo, T. Miwa, and E. Mukhin, {\em Representations of quantum toroidal $\mathfrak{gl}_n$}, J.
Algebra, {\bf 380} (2013), 78–108.

\bibitem[FJMM4]{FJMM4}
B. Feigin, M. Jimbo, T. Miwa, and E. Mukhin, {\em Branching rules for quantum toroidal $\mathfrak{gl}_n$},
Adv. Math., {\bf 300} (2016), 229–274.


\bibitem[FJW]{FJW}
 I. Frenkel, N. Jing, and W. Wang, {\em Quantum vertex representations via finite groups and the
McKay correspondence}, Commun. Math. Phys., {\bf 211} (2000), 365–393.

\bibitem[FT]{FT} B. Feigin and A. Tsymbaliuk, {\em Equivariant K-theory of Hilbert schemes via shuffle algebra,} Kyoto J. Math., {\bf 5} (2011), 831–854.

\bibitem[GJ]{GJ}
Y. Gao and N. Jing, {\em $U_q(\widehat{\mathfrak{gl}}_N)$ action on $\widehat{\mathfrak{gl}}_N$-module and quantum toroidal algebra}, J. Algebra, {\bf 273} (2004), 320-343.
\bibitem[GKV]{GKV}
V. Ginzburg, M. Kapranov, and E.Vasserot, {\em Langlands reciprocity for algebraic surfaces}, Math. Res. Lett., {\bf 2} (1995), 147-160.
\bibitem[JKLT1]{JKLT1}
 N. Jing, F. Kong, H. Li, and S. Tan, {\em $(G,\chi_{\phi})$-equivariant $\phi$-coordinated quasi modules for nonlocal vertex algbebras}, J. Algebra, {\bf 570} (2021), 24-74.

\bibitem[JKLT2]{JKLT-Defom-va}
N.~Jing, F.~Kong, H.~Li, and S.~Tan, {\em Deforming vertex algebras by vertex bialgebras},
 Comm. Cont. Math., {\bf 26} (2024), 2250067.

\bibitem[JKLT3]{JKLT2}
 N. Jing, F. Kong, H. Li, and S. Tan, {\em Twisted quantum affine algebras and equivariant $\phi$-coordinated
modules for quantum vertex algebras}. arXiv:2212.01895.

\bibitem[JLM1]{JLM-qaff-RTT-Dr-BD}
N.~Jing, M.~Liu, and A.~Molev,
{\em Isomorphism between the ${R}$-matrix and {D}rinfeld presentations of
  quantum affine algebra: {T}ype ${B}$ and ${D}$},
 SIGMA, {\bf 16} (2020), 043.

\bibitem[JLM2]{JLM-qaff-RTT-Dr-C}
N.~Jing, M.~Liu, and A.~Molev, {\em Isomorphism between the ${R}$-matrix and {D}rinfeld presentations of
  quantum affine algebra: {T}ype ${C}$},
J. Math. Phys., {\bf 61} (2020), 031701.

\bibitem[Kac]{Kac}
V. Kac, Vertex Algebras for Beginners,  American Mathematical Soc., Vol. 10, Providence, 1998.

\bibitem[Kas]{Kas}
C. Kassel,  Quantum groups,  Graduate Texts in Mathematics, Springer-Verlag, Vol. 155, New York, 1995.
 \bibitem[Kon]{K}
 F. Kong, {\em Quantum Affine Vertex Algebras Associated to Untwisted
 Quantum Affinization Algebras}, Commun. Math. Phys., {\bf 402} (2023), 2577-2625.


\bibitem[Ko\v{z}1]{Kozic-qva-tri-A}
S.~Ko\v{z}i\'{c}, {\em On the quantum affine vertex algebra associated with trigonometric
  ${R}$-matrix}, Selecta Math. (N. S.), (2021), 27-45.

\bibitem[Ko\v{z}2]{K-qva-phi-mod-BCD}
S.~Ko\v{z}i\'{c},
{\em $\hbar$-adic quantum vertex algebras in types ${B}$, ${C}$, ${D}$ and
  their $\phi$-coordinated modules}, J. Phys. A: Math. Theor., {\bf 54} (2021), 485202.
\bibitem[Li1]{Li1}
H. Li, {\em Axiomatic $G_1$-vertex algebras}, Comm. Cont. Math., {\bf 5} (2003), 1-47.
\bibitem[Li2]{Li2}
H. Li, {\em Nonlocal vertex algebras generated by formal vertex operators}, Sel. Math. New Ser., {\bf 11} (2005), 349-397.
\bibitem[Li3]{Li3}
H. Li, {\em A new construction of vertex algebras and qasi-modules for vertex algberas}, Adv. Math., {\bf 202} (2006), 232-286.
\bibitem[Li4]{Li4}
H. Li, {\em $\hbar$-adic quantum vertex algebras and their modules}, Commun. Math. Phys., {\bf 296} (2010), 475-523.
\bibitem[Li5]{Li5}
H. Li, {\em $\phi$-coordinated quasi modules for quantum vertex algebras}, Commun. Math. Phys., {\bf 308} (2011), 703-741.

\bibitem[Li6]{Li6}
H. Li, {\em $G$-equivariant $\phi$-coordinated quasi modules for quantum vertex algebras}, J. Math. Phys., {\bf 54} (2013), 051704.
\bibitem[LL]{LL}
 J. Lepowsky and H. Li, Introduction to Vertex Operator Algebras and Their Representations, volume 227, Birkhäuser Boston Incoporation, 2004.

\bibitem[M1]{M1}
K. Miki, {\em Toroidal braid group action and an automorphism of toroidal algebra
$U_q(\mathfrak{sl}_{n+1,tor})(n \geq 2)$},  Lett. Math. Phys., {\bf 47} (1999), 365-378.
\bibitem[M2]{M2}
 K. Miki, {\em Representations of quantum toroidal algebra $U_q(\mathfrak{sl}_{n+1,tor})(n\geq 2)$}, J. Math.
Phys., {\bf 41} (2000), 7079-7098.
\bibitem[M3]{M3}
K. Miki, {\em A $(q,\gamma)$-analog of the $\mathcal{W}_{1+\infty}$ algebra}, J. Math. Phys., {\bf 48} (2007), 3520.
\bibitem[MNNZ]{MNNZ} Y. Matsuo, S. Nawata, G. Noshita, and R. Zhu, {\em Quantum toroidal algebras and solvable structures in gauge/string theory}, Physics Reports, {\bf 1055} (2024), 1-144.
\bibitem[MRY]{MRY}
 R. Moody,  S.E. Rao, and  T. Yokonuma, {\em Toroidal Lie algebras and vertex representations.} Geom.
 Dedicata, {\bf 35} (1990), 283–307.
\bibitem[N]{N} A. Neguţ, {\em Quantum toroidal and shuffle algebras}, Adv. Math., {\bf 372}
(2020), 107288.
\bibitem[R]{R} M. Roitman, {\em On free conformal and vertex algebras}, J. Algebra, {\bf 217} (1999), 496–527.


\bibitem[RSTS]{RS-RTT}
Y.~Reshetikhin and A.~Semenov-{T}ian {S}hansky,
{\em Central extensions of quantum current groups},
Lett. Math. Phys., {\bf 19} (1990), 133-142.

\bibitem[S]{S}
Y. Saito, {\em Quantum toroidal algebras and their vertex representations}, Publ. Res. Inst.
Math. Sci., {\bf 34} (1998), 155-177.
\bibitem[SKAO]{SKAO}
J. Shiraishi, H. Kubo, H. Awata, and S. Odake, {\em A quantum deformation of the Virasoro algebra and the Macdonald symmetric functions}, Lett. Math.
Phys., {\bf 38} (1996), 33–51.
\bibitem[STU]{STU}
Y. Saito, K. Takemura, and D. Uglov, {\em Toroidal actions on level 1 modules of $U_q(\widehat{sl}_n)$},
Transform. Groups, {\bf 3} (1998), 75–102.
\bibitem[SV]{SV} O. Schiffmann, E. Vasserot, {\em The elliptic Hall algebra, Cherednik Hecke algebras and Macdonald polynomials,} Compos. Math. {\bf 147}  (2011),
188–234.
\bibitem[T]{T} A. Tsymbaliuk, Shuffle approach towards quantum affine and toroidal algebras,
SpringerBriefs in Mathematical Physics, Vol 49, Springer, 2023.
\bibitem[TU]{TU}
K. Takemura and D. Uglov, {\em Level-0 action of $\mathcal{U}_q(\widehat{\mathfrak{sl}_n})$ on the $q$-deformed Fock spaces}, Comm.
Math. Phys., {\bf 190} (1998), 549–583.
\bibitem[VV1]{VV1}
M. Varagnolo and E. Vasserot, {\em Schur duality in the toroidal setting},  Comm. Math.
Phys., {\bf 182} (1996), 469–483.
\bibitem[VV2]{VV2}
M. Varagnolo and E. Vasserot, {\em Double-loop algebras and the Fock space}, Invent. Math., {\bf 133} (1998), 133-159.


\end{thebibliography}
\end{document}